\documentclass[reqno]{amsart}
\usepackage{amsmath}
\usepackage{amscd,amsthm,amsfonts,amsopn,amssymb}
\usepackage[linktocpage]{hyperref}
\usepackage{epsfig}
\usepackage{graphicx}
\numberwithin{equation}{section}

\makeatletter
\def\subsection{\@startsection{subsection}{2}%
  \z@{0.0\linespacing}{-.5em}%
  {\normalfont\bfseries}}
\makeatother

\newtheorem{theorem}{Theorem}[section]

\newtheorem{proposition}[theorem]{Proposition}

\newtheorem{definition}[theorem]{Definition}
\newtheorem{lemma}[theorem]{Lemma}

\theoremstyle{remark}
\newtheorem{remark}[theorem]{Remark}
\DeclareMathOperator{\supp}{supp\,}

\long\def\symbolfootnote[#1]#2{\begingroup%
\def\thefootnote{\fnsymbol{footnote}}\footnote[#1]{#2}\endgroup} 

\allowdisplaybreaks
\begin{document}
\title[The defocusing energy-supercritical cubic wave equation]{Global Well-Posedness and Scattering for the Defocusing Energy-Supercritical Cubic Nonlinear Wave Equation}
\author[Aynur Bulut]{Aynur Bulut}\address{Department of Mathematics, University of Texas at Austin}\email{abulut@math.utexas.edu}
\thanks{Date: \today}
\maketitle
\begin{abstract}
In this paper, we consider the defocusing cubic nonlinear wave equation $u_{tt}-\Delta u+|u|^2u=0$ in the energy-supercritical regime, in dimensions $d\geq 6$, with no radial assumption on the initial data. We prove that if a solution satisfies an a priori bound in the critical homogeneous Sobolev space throughout its maximal interval of existence, that is, $u\in L_t^\infty(\dot{H}_x^{s_c}\times\dot{H}_x^{s_c-1})$, then the solution is global and it scatters.  Our analysis is based on the methods of the recent works of Kenig-Merle \cite{KenigMerleSupercritical} and Killip-Visan \cite{KillipVisanSupercriticalNLS,KillipVisanSupercriticalNLW3D} treating the energy-supercritical nonlinear Schr\"odinger and wave equations.
\end{abstract} 
\tableofcontents
\parskip=8pt
\section{Introduction}
We consider the initial value problem for the defocusing nonlinear wave equation with cubic nonlinearity $F(u)=|u|^2u$ in the energy-supercritical regime, in dimensions $d\geq 6$.  More precisely, we study
\begin{align*}
\textrm{(NLW)}\quad\left\lbrace \begin{array}{rl}u_{tt}-\Delta u+|u|^2u&=0\\
(u,u_t)|_{t=0}&=(u_0,u_1)\in \dot{H}_x^{s_c}\times \dot{H}_x^{s_c-1}(\mathbb{R}^d),
\end{array}
\right.
\end{align*}
where $u(t,x)$ is a real-valued function on $I\times\mathbb{R}^d$ with $d\geq 6$ and $0\in I\subset\mathbb{R}$ is a time interval.

Before explaining the terminology ``energy-supercritical'' let us first recall the notion of criticality.  There is a natural scaling associated to the initial value problem (NLW).  More precisely, if we set
\begin{align*}
u_\lambda(t,x)=\lambda u(\lambda t,\lambda x) \quad \lambda>0,
\end{align*}
then the map $u\mapsto u_\lambda$ maps a solution of (NLW) to another solution of (NLW) and 
\begin{align}
\lVert (u_\lambda,u_{\lambda,t})|_{t=0}\rVert_{\dot{H}_x^{s_c}\times\dot{H}_x^{s_c-1}}=\lVert (u_0,u_1)\rVert_{\dot{H}_x^{s_c}\times\dot{H}_x^{s_c-1}},\label{l1}
\end{align}
where we define the {\it critical regularity} as $s_c=\frac{d-2}{2}$.  In the case $s_c=1$, the above scaling leaves the {\it energy},
\begin{align*}
E(u(t),u_t(t))=\int_{\mathbb{R}^d}\frac{1}{2}|\nabla u(t)|^2+\frac{1}{2}|u_t(t)|^2+\frac{1}{4}|u(t)|^4dx
\end{align*}
invariant.  We note that, in view of the cubic nonlinearity, dimension $d>4$ corresponds to the range $s_c>1$, and is therefore known as the {\it energy-supercritical} regime for (NLW).

In the present work, we study (NLW) with initial data lying in the critical homogeneous Sobolev space $\dot{H}_x^{s_c}\times\dot{H}_x^{s_c-1}$ in the energy-supercritical regime $s_c>1$, in dimensions $d\geq 6$, with no radial assumption on the initial data.  

We consider {\it solutions} to (NLW), that is, functions $u:I\times\mathbb{R}^d\rightarrow\mathbb{R}$ such that for every $K\subset I$ compact, $(u,u_t)\in C_t(K;\dot{H}_x^{s_c}\times\dot{H}_x^{s_c-1})$, $u\in L_{t,x}^{d+1}(K\times\mathbb{R}^d)$, and satisfying the {\it Duhamel formula}
\begin{align*}
u(t)&=\mathcal{W}(t)(u_0,u_1)+\int_0^t \frac{\sin((t-t')|\nabla|)}{|\nabla|}F(u(t'))dt'
\end{align*}
for every $t\in I$, where $0\in I\subset\mathbb{R}$ is a time interval and the wave propagator
\begin{align*}
\mathcal{W}(t)(u_0,u_1)=\cos(t|\nabla|)u_0+\frac{\sin(t|\nabla|)}{|\nabla|}u_1
\end{align*}
is the solution to the linear wave equation with initial data $(u_0,u_1)$.  

We refer to $I$ as the {\it interval of existence} of $u$, and we say that $I$ is the {\it maximal interval of existence} if $u$ cannot be extended to any larger time interval.  We say that $u$ is a {\it global solution} if $I=\mathbb{R}$, and that $u$ is a {\it blow-up solution} if $\lVert u\rVert_{L_{t,x}^{d+1}(I\times\mathbb{R}^d)}=\infty$.

In this paper, we prove that if $u$ is a solution to (NLW) which is uniformly bounded in the critical space for all times in its maximal interval of existence, then it is defined globally in time and scatters.  

More precisely, our main result is the following:
\begin{theorem}
\label{l2}
Let $d\geq 6$ and $s_c=\frac{d-2}{2}$.  Assume $u:I\times\mathbb{R}^d\rightarrow\mathbb{R}$ is a solution to (NLW) with maximal interval of existence $I\subset\mathbb{R}$ satisfying
\begin{align}
(u,u_t)\in L_t^\infty(I;\dot{H}_x^{s_c}\times \dot{H}_x^{s_c-1}).
\label{l3}
\end{align} 

Then $u$ is global and 
\begin{align*}
\lVert u\rVert_{L_{t,x}^{d+1}(\mathbb{R}\times\mathbb{R}^d)}\leq C.
\end{align*}
for some constant $C=C(\lVert (u,u_t)\rVert_{L_t^\infty(I;\dot{H}_x^{s_c}\times\dot{H}_x^{s_c-1})})$.

Moreover, $u$ scatters in the sense that there exist unique $(u_0^\pm,u_1^\pm)\in \dot{H}_x^{s_c}\times \dot{H}_x^{s_c-1}$ such that
\begin{align*}
\lim_{t\rightarrow \pm\infty} \lVert (u(t),u_t(t))-(\mathcal{W}(t)(u_0^\pm,u_1^\pm),\partial_t \mathcal{W}(t)(u_0^\pm,u_1^\pm))\rVert_{\dot{H}_x^{s_c}\times\dot{H}_x^{s_c-1}}=0.
\end{align*}
\end{theorem}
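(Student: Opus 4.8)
The plan is to argue by contradiction using the concentration-compactness/rigidity scheme of Kenig-Merle, adapted to the energy-supercritical wave equation as in Killip-Visan. Suppose Theorem \ref{l2} fails. Then there is a threshold
\[
E_c=\sup\{\lVert(u,u_t)\rVert_{L_t^\infty(I;\dot H_x^{s_c}\times\dot H_x^{s_c-1})}:u\text{ a solution with }\lVert u\rVert_{L_{t,x}^{d+1}(I\times\mathbb R^d)}=\infty\}
\]
which is finite and positive. The first block of the argument is the local theory: small-data global well-posedness and scattering in $\dot H_x^{s_c}\times\dot H_x^{s_c-1}$ via Strichartz estimates for the wave equation (these force a nontrivial $E_c$), a stability/perturbation lemma, and a characterization of blow-up. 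Then a linear profile decomposition for sequences bounded in $\dot H_x^{s_c}\times\dot H_x^{s_c-1}$, together with the nonlinear profiles and the perturbation lemma, produces a minimal-energy blow-up solution: a solution $u$ with $\lVert(u,u_t)\rVert_{L_t^\infty\dot H^{s_c}\times\dot H^{s_c-1}}=E_c$ whose orbit $\{(u(t),u_t(t))/\lambda(t):t\in I\}$ is precompact in $\dot H_x^{s_c}\times\dot H_x^{s_c-1}$ for some scaling function $\lambda:I\to(0,\infty)$; one also records the attendant spatial center $x(t)$.

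The second block is the reduction of the almost-periodic solution to one of a few enemy scenarios, controlled by the behavior of $\lambda(t)$. Following Killip-Visan, one shows $\lambda(t)$ can be taken bounded below on any compact subinterval, rules out rapid cascade/self-similar type behavior via a frequency-localized Duhamel argument and interpolation, and normalizes to essentially two cases: a finite-time blow-up solution (where $\lambda(t)\to\infty$ as $t$ approaches the blow-up time) and a global soliton-like solution with $\lambda(t)\equiv 1$ (after rescaling), plus possibly a low-to-high frequency cascade. For each, the key input is additional regularity: bootstrapping via the Duhamel formula, Strichartz estimates, and a "no-waste" Duhamel principle (the solution equals the Duhamel integral from $\pm\infty$ with only the nonlinear term, in a suitable weak sense) to push the a priori $\dot H^{s_c}$ control up to $\dot H^1$ and hence into the finite-energy space, where the defocusing energy $E(u,u_t)\ge 0$ becomes available.

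The third block kills each enemy. For the finite-time blow-up solution, one proves the energy is in fact zero — by tracking the support of the almost-periodic solution (finite speed of propagation forces the solution into a shrinking ball) and using the additional regularity together with conservation — so $u\equiv 0$, contradiction. For the global soliton-like solution, one runs a Morawetz/virial-type estimate: the interaction Morawetz or a localized version thereof, applied with the compactness of the orbit, yields $\lVert u\rVert_{L_{t,x}^{d+1}}^{}<\infty$ on $\mathbb R$, contradicting blow-up; the low-to-high cascade is excluded because the additional regularity forces $\dot H^1$ conservation, which combined with $\lambda(t)\to\infty$ and compactness forces the energy — and hence the solution — to vanish.

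I expect the main obstacle to be the additional-regularity step in the second block: propagating the merely-critical Sobolev bound up to the energy space without any radial assumption. The double-Duhamel trick and the frequency-localized estimates are delicate in the supercritical wave setting — one must handle the weak decay of the wave propagator, the loss of derivatives in the Strichartz exponents available for $d\ge 6$, and the interplay between the moving spatial center $x(t)$ and the frequency scale $\lambda(t)$. A secondary difficulty is establishing the Morawetz inequality with the moving center and the supercritical scaling, where the usual energy-critical monotonicity formulae do not directly apply and must be adapted following the Killip-Visan framework. These are exactly the places where the arguments of \cite{KillipVisanSupercriticalNLS,KillipVisanSupercriticalNLW3D} must be extended from their original settings to the defocusing cubic wave equation in dimensions $d\ge 6$.
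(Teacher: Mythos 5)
Your proposal follows essentially the same route as the paper: concentration compactness producing a minimal almost-periodic blow-up solution, reduction to the finite-time blow-up, soliton-like, and low-to-high frequency cascade scenarios, additional decay into the energy space via the double-Duhamel argument (with exactly the $p$-exponent balance that forces $d\geq 6$), and then Morawetz (soliton) and energy-vanishing (blow-up and cascade) contradictions. Two small slips worth fixing: the critical threshold $E_c$ should be an \emph{infimum}, not a supremum, over blow-up solutions; and in the finite-time blow-up case the additional-decay theorem is not available (it requires $I=\mathbb{R}$ and $\inf_t N(t)\geq 1$) --- instead one gets $E(u(t),u_t(t))\lesssim(1-t)^{d-4}\rightarrow 0$ directly from the shrinking support together with H\"older and the a priori critical bound.
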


We note that when the cubic nonlinearity $F(u)=|u|^2u$ is replaced by the $d$-dimensional energy-supercritical nonlinearity $|u|^pu$, $p>\frac{4}{d-2}$, the above theorem was proved by Kenig and Merle \cite{KenigMerleSupercritical} in $d=3$ for radial initial data, and by Killip and Visan \cite{KillipVisanSupercriticalNLW3D} for general data in $d=3$ with even values of $p$ and also in \cite{KillipVisanSupercriticalNLWradial} for $d\geq 3$ and radial initial data with a specified range of $p$.

The contribution of the present work to the study of the energy-supercritical regime is to consider the case of higher dimensions $d\geq 6$ with no radial assumption on the initial data.  The restriction to the cubic nonlinearity in our considerations mainly serves to simplify the estimates required for the local theory.

The main tool which allows us to consider non-radial initial data, as in the Schr\"odinger context \cite{KillipVisanSupercriticalNLS}, is to prove that certain solutions to (NLW) have finite energy.  This result makes use of the double Duhamel technique \cite{CKSTT,Tao} which is used for the same purpose in \cite{KillipVisanECritical,KillipVisanSupercriticalNLS}.  In the present context, the restriction to dimensions $d\geq 6$ appears as a consequence of our use of this technique; see the discussion in Section $3$ for a more detailed account.

We also remark that similar results showing that the boundedness of a critical norm implies global well-posedness are known for Navier-Stokes, which is also a supercritical problem with respect to the control given by the known conservation laws and monotonicity formulae; see the work of Escauriaza, Seregin, and \v Sver\'ak \cite{EscauriazaSereginSverak} as well as Kenig and Koch \cite{KenigKoch}.

In the case $s_c=1$ with the energy critical defocusing nonlinearity $|u|^{4/(d-2)}u$, local well-posedness for the initial value problem (NLW) has been studied in a number of papers; see, for instance,\cite{BulutCzubakLiPavlovicZhang,GinibreSoferVelo,KenigMerleNLW,LindbladSogge,Pecher,ShatahStruwe2,ShatahStruwe,Sogge}.  Global well-posedness in the defocusing case was obtained in a series of works \cite{BahouriGerard,Grillakis1,Grillakis2,Kapitanski,Nakanishi,Pecher,Rauch,ShatahStruwe1,ShatahStruwe,Struwe,Tao1}.  In particular, Struwe \cite{Struwe} obtained the global well-posedness for energy critical (NLW) with radial initial data in $d=3$, while Grillakis \cite{Grillakis1} removed the radial assumption in this dimension.  The global well-posedness and persistence of regularity was shown for $3\leq d\leq 5$ by Grillakis \cite{Grillakis2}, and for $d\geq 3$ by Shatah and Struwe \cite{ShatahStruwe1,ShatahStruwe2,ShatahStruwe} and Kapitanski \cite{Kapitanski}.

We remark that in all of the works cited in the previous paragraph, the key property in obtaining global well-posedness results for the energy critical (NLW) is an immediate uniform control in time of the critical norm $\dot{H}_x^1\times L_x^2$ by virtue of the conservation of energy.  It is also important to note that monotonicity formulae like the Morawetz identity have the critical scaling in all of these results.

In the case $s_c>1$, the energy supercritical regime, the global behavior of solutions to (NLW) is a more delicate matter, as in this context we do not have instantaneous access to any conservation law at the critical regularity.  In view of the energy critical theory, it is then natural to impose an a priori uniform in time control of the critical norm $\dot{H}_x^{s_c}\times\dot{H}_x^{s_c-1}$ to compensate for the lack of such a conservation law.  This is the reason why we have the assumption $(u,u_t)\in L_t^\infty(I;\dot{H}_x^{s_c}\times\dot{H}_x^{s_c-1})$ in Theorem $\ref{l2}$.  However, the difficulty that the scaling of the a priori bound ($\ref{l3}$) no longer matches the scaling of the monotonicity formulae, namely the Morawetz identity, remains to be overcome.  Thus, one must proceed in a different manner than in the energy critical case.

A similar difficulty, where the monotonicity formula has a different scaling than the known conservation laws, also appears in the study of the nonlinear Schr\"odinger equation, and the techniques developed in that setting will play an important role in our analysis.  Accordingly, we now briefly describe the approach that we follow in this paper.  For a detailed discussion, we refer the reader to Section $3$.  To prove Theorem $\ref{l2}$ we argue by contradiction: assuming that the theorem fails, one constructs a minimal blowup solution using the concentration compactness/rigidity approach introduced by Kenig-Merle in their work \cite{KenigMerleNLS,KenigMerleNLW,KenigMerleSupercritical}.  Then, using a further reduction obtained by Killip-Tao-Visan \cite{KillipTaoVisanCubic} and Killip-Visan \cite{KillipVisanECritical,KillipVisanSupercriticalNLS,KillipVisanSupercriticalNLW3D}, we conclude that there exists a special solution satisfying one of three possible scenarios: the finite time blow-up solution, the soliton-like solution, and the low-to-high frequency cascade solution.  To conclude the argument, we then show that each such scenario cannot exist.
\vspace{0.05in}

\subsection*{Organization of the paper}

We now outline the remainder of this paper.  In Section $2$, we introduce our notation and present some preliminaries for our discussion.  In Section $3$, we give a detailed overview of the proof of Theorem $\ref{l2}$.  Section $4$ is devoted to the study of the local theory (local well-posedness and stability), while in Section $5$ we state and prove a lemma as a consequence of the finite speed of propagation that will be used in Sections $6$ and $8$.
In Section $6$, we rule out the finite-time blow-up scenario.  In Section $7$, we prove an additional decay result for the soliton-like and low-to-high frequency cascade scenarios.  This result is then used to rule out these two cases in Sections $8$ and $9$ respectively.
We conclude the paper with a brief Appendix, in which we provide the details of some arguments used in the main body of the paper.

\section{Preliminaries}

In this section, we introduce the notation and some basic estimates that we use throughout the paper.  
For any time interval $I\subseteq \mathbb{R}$, we write $L_t^qL_x^r(I\times\mathbb{R}^d)$ to denote the spacetime norm
\begin{align*}
\lVert u\rVert_{L_t^qL_x^r}&=\left(\int_{\mathbb{R}}\left(\int_{\mathbb{R}^d} |u(t,x)|^rdx\right)^\frac{q}{r}dt\right)^\frac{1}{q}
\end{align*}
with the standard definitions when $q$ or $r$ is equal to infinity.  In the case $q=r$, we shorten the notation $L_t^qL_x^r$ and write $L_{t,x}^q$.  

We write $X\lesssim Y$ to indicate that there exists a constant $C>0$ such that $X\leq CY$.  We use the symbol $\nabla$ for the derivative operator in the space variable.  

\noindent In what follows, we define the Fourier transform on $\mathbb{R}^d$ by 
\begin{align*}
\hat{f}(\xi)&:=(2\pi)^{-d/2}\int_{\mathbb{R}^d} e^{-ix\cdot \xi}f(x)dx.
\end{align*}

\noindent We also define the homogeneous Sobolev space $\dot{H}_x^s(\mathbb{R}^d)$, $s\in\mathbb{R}$ via the norm
\begin{align*}
\lVert f\rVert_{\dot{H}_x^{s}}&:=\lVert |\nabla|^sf\rVert_{L_x^2}
\end{align*}
where the fractional differentiation operator is given by
\begin{align*}
\widehat{|\nabla|^sf}(\xi)&:=|\xi|^s\hat{f}(\xi).
\end{align*}

We use $\mathcal{W}(t)$ to denote the linear wave propagator associated to (NLW).  In physical space the operator is given by
\begin{align*}
\mathcal{W}(t)(f,g)&=\cos(t|\nabla|)f+\frac{\sin(t|\nabla|)}{|\nabla|}g
\end{align*}
or, equivalently, in frequency space it is written as
\begin{align*}
\widehat{\mathcal{W}(t)}(f,g)(\xi)=\cos(t|\xi|)\hat{f}(\xi)+\frac{\sin(t|\xi|)}{|\xi|}\hat{g}(\xi).
\end{align*}
In particular, in terms of the explicit form of the propagator, we recall the following standard dispersive estimate.
\begin{proposition}[Dispersive estimate, \cite{ShatahStruwe}]
\label{l4}
For any $d\geq 2$, $2\leq p<\infty$ and $t\neq 0$ we have
\begin{align}
\bigg\lVert \frac{e^{it|\nabla|}}{|\nabla|} f\bigg\rVert_{L_x^p}&\lesssim |t|^{-\frac{d-1}{2}\left(1-\frac{2}{p}\right)}\lVert |\nabla|^{\frac{d-1}{2}-\frac{d+1}{p}}f\rVert_{L_x^{p'}}.
\end{align}
In particular,
\begin{align}
\bigg\lVert \frac{\sin(t|\nabla|)}{|\nabla|}f\bigg\rVert_{L_x^p(\mathbb{R}^d)}&\lesssim |t|^{-\frac{(d-1)}{2}\left(1-\frac{2}{p}\right)}\lVert |\nabla|^{\frac{d-1}{2}-\frac{d+1}{p}}f\rVert_{L_x^{p'}(\mathbb{R}^d)}
\label{l5}
\end{align}
and
\begin{align*}
\bigg\lVert \frac{\cos(t|\nabla|)}{|\nabla|^2}g\bigg\rVert_{L_x^p(\mathbb{R}^d)}&\lesssim |t|^{-\frac{(d-1)}{2}\left(1-\frac{2}{p}\right)}\lVert |\nabla|^{\frac{d-3}{2}-\frac{d+1}{p}}g\rVert_{L_x^{p'}(\mathbb{R}^d)},
\end{align*}
for all $f,g\in \mathcal{S}(\mathbb{R}^d)$, where $\frac{1}{p'}+\frac{1}{p}=1$.
\end{proposition}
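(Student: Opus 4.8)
The plan is to prove the estimate first for frequency-localized data by stationary phase, upgrade it to an $L_x^{p'}\to L_x^p$ bound by interpolating with the trivial $L_x^2$ estimate, and then remove the localization using Littlewood--Paley theory. It suffices to treat the propagator $e^{it|\nabla|}$ for both signs of $t$: since $\frac{\sin(t|\nabla|)}{|\nabla|}=\frac{1}{2i|\nabla|}(e^{it|\nabla|}-e^{-it|\nabla|})$ and $\frac{\cos(t|\nabla|)}{|\nabla|^2}=\frac{1}{2|\nabla|^2}(e^{it|\nabla|}+e^{-it|\nabla|})$, and $L_x^p$ norms are unchanged under complex conjugation and under $t\mapsto-t$, the last two displayed estimates follow from the first --- the cosine one after absorbing an extra factor $|\nabla|^{-1}$ into $f$, which replaces the exponent $\frac{d-1}{2}-\frac{d+1}{p}$ by $\frac{d-3}{2}-\frac{d+1}{p}$.

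For the frequency-localized estimate, let $P_N$ be a Littlewood--Paley projection to frequencies $|\xi|\sim N$. The convolution kernel of $e^{it|\nabla|}P_1$ is $K_t(x)=\int_{\mathbb{R}^d}e^{i(x\cdot\xi+t|\xi|)}\psi(\xi)\,d\xi$ with $\psi$ smooth and supported where $|\xi|\sim1$. Writing the phase as $t(|\xi|+\tfrac{x}{t}\cdot\xi)$, its $\xi$-gradient is $\tfrac{\xi}{|\xi|}+\tfrac{x}{t}$, which can vanish only when $|x|\sim|t|$; off that region, repeated integration by parts yields decay faster than any power of $|t|$. Near the light cone, the relevant Hessian is the second fundamental form of the sphere $|\xi|\sim1$, which is non-degenerate of rank $d-1$, so stationary phase in those directions gives $|K_t(x)|\lesssim|t|^{-\frac{d-1}{2}}$ uniformly in $x$. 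Undoing the dyadic rescaling and restoring the factor $|\xi|^{-1}$ yields
\[
\Big\|\tfrac{e^{it|\nabla|}}{|\nabla|}P_Nf\Big\|_{L_x^\infty}\lesssim|t|^{-\frac{d-1}{2}}\,N^{\frac{d-1}{2}}\,\|f\|_{L_x^1};
\]
combined with $\big\|\tfrac{e^{it|\nabla|}}{|\nabla|}P_Nf\big\|_{L_x^2}\lesssim N^{-1}\|f\|_{L_x^2}$ (from unitarity of $e^{it|\nabla|}$ on $L_x^2$), Riesz--Thorin interpolation at $\theta=1-\tfrac{2}{p}$ gives
\[
\Big\|\tfrac{e^{it|\nabla|}}{|\nabla|}P_Nf\Big\|_{L_x^p}\lesssim|t|^{-\frac{d-1}{2}\left(1-\frac{2}{p}\right)}\,N^{\frac{d-1}{2}-\frac{d+1}{p}}\,\|f\|_{L_x^{p'}},
\]
using that $\tfrac{d-1}{2}\theta-(1-\theta)=\tfrac{d-1}{2}-\tfrac{d+1}{p}$.

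Finally, set $\gamma:=\tfrac{d-1}{2}-\tfrac{d+1}{p}$ and sum in $N$. Since $\tfrac{e^{it|\nabla|}}{|\nabla|}P_Nf$ is spectrally supported where $|\xi|\sim N$, the dual Littlewood--Paley square-function inequality (valid for $2\le p<\infty$), Minkowski's inequality ($p\ge2$), and the per-frequency estimate applied to each $P_Nf$ give
\begin{align*}
\Big\|\tfrac{e^{it|\nabla|}}{|\nabla|}f\Big\|_{L_x^p}
&\lesssim\Big(\sum_N\big\|\tfrac{e^{it|\nabla|}}{|\nabla|}P_Nf\big\|_{L_x^p}^2\Big)^{1/2}\\
&\lesssim|t|^{-\frac{d-1}{2}\left(1-\frac{2}{p}\right)}\Big(\sum_N N^{2\gamma}\|P_Nf\|_{L_x^{p'}}^2\Big)^{1/2},
\end{align*}
and since $N^{\gamma}\|P_Nf\|_{L_x^{p'}}\sim\||\nabla|^\gamma P_Nf\|_{L_x^{p'}}$, a second application of Minkowski's inequality (now using $p'\le2$) together with the Littlewood--Paley square-function estimate for $1<p'<\infty$ bounds the last factor by $\||\nabla|^\gamma f\|_{L_x^{p'}}$; this is the claim for $f\in\mathcal{S}(\mathbb{R}^d)$, and density finishes. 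The main obstacle is the stationary-phase kernel bound: one must control the radial degeneracy of the Hessian of $|\xi|$ and the concentration of $K_t$ near the cone $|x|\sim|t|$ with full uniformity in $x$, and it is there that the decay rate $\tfrac{d-1}{2}$ originates; the interpolation and frequency summation are then routine harmonic analysis, with the hypothesis $p<\infty$ needed precisely so that the final dyadic sum converges.
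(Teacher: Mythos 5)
Your proof is correct: the reduction of the sine and cosine estimates to the half-wave propagator, the stationary-phase kernel bound $|K_t(x)|\lesssim|t|^{-\frac{d-1}{2}}$ for the unit-frequency piece, the rescaling and Riesz--Thorin interpolation producing the exponent $\frac{d-1}{2}-\frac{d+1}{p}$, and the two applications of Minkowski (in the directions permitted by $p\ge2$ and $p'\le2$) together with the Littlewood--Paley square-function estimates all check out. The paper does not prove this proposition but cites it to Shatah--Struwe, and your argument is precisely the standard proof given there, so there is nothing to compare beyond noting the agreement.
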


For $s\geq 0$, we say that a pair of exponents $(q,r)$ is $\dot{H}_x^s$-{\it wave admissible} if $q,r\geq 2$, $r<\infty$ and it satisfies 
\begin{align*}
\frac{1}{q}+\frac{d-1}{2r}\leq \frac{d-1}{4},\\
\frac{1}{q}+\frac{d}{r}=\frac{d}{2}-s.
\end{align*}

The {\it Strichartz estimates} then read as follows; for a proof, see \cite{GinibreVelo,KeelTao,Sogge}.  
Assume $u:I\times\mathbb{R}^d\rightarrow\mathbb{R}$ with time interval $0\in I\subset\mathbb{R}$ is a solution to the nonlinear wave equation
\begin{align*}
\left\lbrace \begin{array}{rl}u_{tt}-\Delta u+F&=0\\
(u,u_t)|_{t=0}&=(u_0,u_1)\in \dot{H}_x^{\mu}\times \dot{H}_x^{\mu-1}(\mathbb{R}^d), \quad \mu\in\mathbb{R}.
\end{array}
\right.
\end{align*}
Then 
\begin{align}
 \label{l6}&\lVert |\nabla|^{s}u\rVert_{L_t^qL_x^r}+\lVert |\nabla|^{s-1}u_t\rVert_{L_t^qL_x^r}+\lVert |\nabla|^{\mu}u\rVert_{L_t^\infty L_x^2}+\lVert |\nabla|^{\mu-1}u_t\rVert_{L_t^\infty L_x^2 }\\
\nonumber&\hspace{1.5in}\lesssim \lVert (u_0,u_1)\rVert_{\dot{H}_x^{\mu}\times \dot{H}_x^{\mu-1}}+\lVert |\nabla|^{\tilde{s}}F\rVert_{L_t^{\tilde{q}'}L_x^{\tilde{r}'}}
\end{align}
for $s\geq 0$, where the pair $(q,r)$ is $\dot{H}_x^{\mu-s}$-wave admissible and the pair $(\tilde{q},\tilde{r})$ is $\dot{H}_x^{1+\tilde{s}-\mu}$-wave admissible.

We also define the following {\it Strichartz norms}.  For each $I\subset\mathbb{R}$ and $s\geq 0$, we set
\begin{align*}
\lVert u\rVert_{S_s(I)}&=\sup_{(q,r)\,\dot{H}_x^s-\textrm{wave admissible}} \lVert u\rVert_{L_t^qL_x^r(I\times\mathbb{R}^d)},\\
\lVert u\rVert_{N_s(I)}&=\inf_{(q,r)\,\dot{H}_x^s-\textrm{wave admissible}} \lVert u\rVert_{L_t^{q'}L_x^{r'}(I\times\mathbb{R}^d)}.
\end{align*}

\noindent Taking the supremum over $(q,r)$ $\dot{H}_x^{\mu-s}$-wave admissible and the infimum over $(\tilde{q},\tilde{r})$ $\dot{H}_x^{1+\tilde{s}-\mu}$-wave admissible pairs in ($\ref{l6}$), we also have,
\begin{align*}
\lVert |\nabla|^s u\rVert_{S_{\mu-s}(I)}+\lVert |\nabla|^{s-1}u_t\rVert_{S_{\mu-s}(I)}\lesssim \lVert (u_0,u_1)\rVert_{\dot{H}_x^{\mu}\times\dot{H}_x^{\mu-1}}+\lVert |\nabla|^{\tilde{s}}F\rVert_{N_{1+\tilde{s}-\mu}(I)}.
\end{align*}

\vspace{0.05in}
We next recall some basic facts from Littlewood-Paley theory that will be used frequently in the sequel.  Let $\phi(\xi)$ be a real valued radially symmetric bump function supported in the ball $\{\xi\in\mathbb{R}^d:|\xi|\leq 2\}$ which equals $1$ on the ball $\{\xi\in\mathbb{R}^d:|\xi|\leq 1\}$.  For any dyadic number $N=2^k$, $k\in\mathbb{Z}$, we define the following Littlewood-Paley operators:
\begin{align*}
\widehat{P_{\leq N}f}(\xi)&=\phi(\xi/N)\hat{f}(\xi),\\
\widehat{P_{>N}f}(\xi)&=(1-\phi(\xi/N)\hat{f}(\xi),\\
\widehat{P_Nf}(\xi)&=(\phi(\xi/N)-\phi(2\xi/N))\hat{f}(\xi).
\end{align*}

\noindent Similarly, we define $P_{<N}$ and $P_{\geq N}$ with 
\begin{align*}
P_{<N}=P_{\leq N}-P_N,\quad P_{\geq N}=P_{>N}+P_N,
\end{align*}
and also
\begin{align*}
P_{M<\cdot\leq N}:=P_{\leq N}-P_{\leq M}=\sum_{M<N_1\leq N} P_{N_1}
\end{align*}
whenever $M\leq N$.

These operators commute with one another, with derivative operators and with the wave propagator $\mathcal{W}(t)(f,g)$.  Moreover, they are bounded on $L_x^p$ for $1\leq p\leq \infty$ and obey the following {\it Bernstein inequalities}, 
\begin{align*}
\lVert P_Nf\rVert_{L_x^q}&\lesssim N^{\frac{d}{p}-\frac{d}{q}}\lVert P_Nf\rVert_{L_x^p},\\
\lVert |\nabla|^{\pm s}P_Nf\rVert_{L_x^p}&\sim N^{\pm s} \lVert P_Nf\rVert_{L_x^p},\\
\lVert P_{\leq N}f\rVert_{L_x^q}&\lesssim N^{\frac{d}{p}-\frac{d}{q}}\lVert P_{\leq N}f\rVert_{L_x^p}.
\end{align*}
with $s\geq 0$ and $1\leq p\leq q\leq\infty$.

\vspace{0.1in}

We also recall the following Morawetz estimate for the wave equation.
\begin{theorem} [Morawetz estimate \cite{Morawetz,Morawetz2}]
\label{l7}
Assume $u:I\times\mathbb{R}^d\rightarrow\mathbb{R}$ is a solution to (NLW).  Then we have
\begin{align*}
\int_{I}\int_{\mathbb{R}^d} \frac{|u(t,x)|^4}{|x|}dxdt\leq CE(u,u_t).
\end{align*}
\end{theorem}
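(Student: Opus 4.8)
The plan is to establish the estimate via the classical Morawetz multiplier argument, using the vector field generated by the weight $a(x)=|x|$. Concretely, writing $\partial_r:=\tfrac{x}{|x|}\cdot\nabla$, one pairs the equation $u_{tt}-\Delta u+|u|^2u=0$ with the multiplier $\partial_r u+\tfrac{d-1}{2|x|}u$ and integrates over $I\times\mathbb{R}^d$. One works first with smooth, suitably decaying solutions and then recovers the general case by an approximation argument (mollifying the data, or truncating the weight $|x|$ near $0$ and near $\infty$ with cutoffs and passing to the limit). We also note that the asserted inequality is only of use for finite-energy solutions; when $E(u,u_t)=\infty$ it is trivially true, so throughout we may assume the right-hand side is finite and, hence, that the energy is conserved.

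First I would introduce the Morawetz action
\begin{align*}
M(t):=\int_{\mathbb{R}^d}u_t(t,x)\Bigl(\partial_r u(t,x)+\frac{d-1}{2|x|}u(t,x)\Bigr)\,dx,
\end{align*}
and observe, by Cauchy--Schwarz together with Hardy's inequality $\bigl\||x|^{-1}u\bigr\|_{L_x^2}\lesssim\|\nabla u\|_{L_x^2}$ (valid since $d\geq 6\geq 3$), that $\sup_{t\in I}|M(t)|\lesssim\sup_{t\in I}\|u_t\|_{L_x^2}\|\nabla u\|_{L_x^2}\lesssim E(u,u_t)$. Next I would differentiate $M$ in time and substitute $u_{tt}=\Delta u-|u|^2u$. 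The contribution of $u_t\,\partial_t(\cdots)$ vanishes after integration by parts, using $\nabla\cdot\tfrac{x}{|x|}=\tfrac{d-1}{|x|}$. For the remaining piece, integrating by parts in space, the linear part produces $-\int\frac{1}{|x|}\bigl(|\nabla u|^2-|\partial_r u|^2\bigr)\,dx+\tfrac{(d-1)(3-d)}{4}\int\frac{|u|^2}{|x|^3}\,dx$, which is $\leq 0$ for $d\geq 3$, while the cubic term, after writing $|u|^2u\,\partial_r u=\tfrac14\partial_r(|u|^4)$ and integrating by parts once more, contributes exactly $-\tfrac{d-1}{4}\int\frac{|u|^4}{|x|}\,dx$. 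Hence $-\tfrac{d}{dt}M(t)\geq\tfrac{d-1}{4}\int_{\mathbb{R}^d}\frac{|u(t,x)|^4}{|x|}\,dx\geq 0$.

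Finally, integrating this differential inequality over $I$ and invoking the uniform bound on $M$ from the first step gives
\begin{align*}
\frac{d-1}{4}\int_I\int_{\mathbb{R}^d}\frac{|u(t,x)|^4}{|x|}\,dx\,dt\leq M(\inf I)-M(\sup I)\leq 2\sup_{t\in I}|M(t)|\leq C\,E(u,u_t),
\end{align*}
which is the claim. The main difficulty here is not the algebra but making the computation rigorous at the regularity of an actual solution: the weight $\tfrac{1}{|x|}$ is singular at the origin, so one must check that the integrations by parts produce no unfavorable boundary contribution at $x=0$ (for $d\geq 3$ and the choice $a(x)=|x|$, the origin terms either vanish or carry the correct sign), and the limiting/truncation procedure must be carried out with the error terms controlled by Strichartz estimates and the energy.
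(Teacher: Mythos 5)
The paper does not prove this statement at all: it is quoted as a classical result with references to Morawetz and Morawetz--Strauss, so there is no in-paper argument to compare against. Your multiplier computation with $a(x)=|x|$ (action $M(t)=\int u_t(\partial_r u+\tfrac{d-1}{2|x|}u)\,dx$, sign of $\nabla^2 a$ and of $\Delta^2 a=(d-1)(3-d)|x|^{-3}$, the coefficient $\tfrac{d-1}{4}-\tfrac{d-1}{2}=-\tfrac{d-1}{4}$ on the cubic term, and the Hardy/Cauchy--Schwarz bound $\sup_t|M(t)|\lesssim E$) is exactly the standard proof behind those citations, and the algebra checks out. You are also right that the only real work is the justification of the integrations by parts near $x=0$ and the approximation by regular solutions, which you have correctly flagged rather than swept under the rug.
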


We end this section by noting some basic facts concerning the fractional derivative operator.
\begin{remark}
Suppose $\phi\in C_0^\infty(\mathbb{R}^d)$, where $C^\infty_0$ denotes the space of smooth functions having compact support.  Then for all nonnegative integers $s$ and all $p\geq 1$ we have $|\nabla|^s\phi\in L_x^p$, while for all $s>0$ and all $p\in [2,d)$, we have $|\nabla|^s\phi\in L_x^p$.  
\label{l8}
\end{remark}

We also note a (simple) version of the chain rule which allows us to compute the fractional derivative of a composition with a linear function.
\begin{remark}
\label{l9}For all $s>0$, $|\nabla|^s\left[u(\alpha \cdot)\right](x)=\alpha^s (|\nabla|^su)(\alpha x)$.
\end{remark}

\parskip=8pt
\section{Overview of the proof of Theorem $\ref{l2}$}

We now give a brief outline of the proof of our main result, Theorem \ref{l2}.  The approach we pursue here follows the methods introduced by Kenig and Merle \cite{KenigMerleNLS,KenigMerleNLW} and Killip, Tao, and Visan \cite{KillipTaoVisanCubic}, and developed in the works \cite{KenigMerleSupercritical,KenigMerleH12,KillipVisanSupercriticalNLS,KillipVisanECritical,KillipVisanSupercriticalNLW3D}.

As we have mentioned in the introduction, the proof of Theorem $\ref{l2}$ is an argument by contradiction and consists of the following components:

\subsection{Concentration compactness}
The first ingredient in establishing Theorem \ref{l2} is a concentration compactness result in the form of a profile decomposition theorem for solutions of the linear wave equation.  In a broad sense, it asserts that any bounded sequence of initial data in the critical space $\dot{H}_x^{s_c}\times \dot{H}_x^{s_c-1}$ can be decomposed up to a subsequence as the sum of a superposition of profiles and an error term.  The profiles are asymptotically orthogonal and the remainder term is small in a Strichartz norm.  The idea behind this decomposition is to compensate for the lack of compactness of the linear wave propagator $\mathcal{W}(t)$ as a map from the space $\dot{H}_x^{s_c}\times \dot{H}_x^{s_c-1}$ to the Strichartz space $S_{s_c}(\mathbb{R})$.  

In the present context, the higher dimensional version of the profile decomposition with initial data lying in $\dot{H}_x^{s_c}\times \dot{H}_x^{s_c-1}$ reads as follows:
\begin{theorem} [Profile decomposition \cite{Bulut}]
\label{l10}Let $s_c=\frac{d-2}{2}$ and $(u_{0,n},u_{1,n})_{n\in\mathbb{N}}$ be a bounded sequence in $\dot{H}_x^{s_c}\times \dot{H}_x^{s_c-1}(\mathbb{R}^d)$ with $d\geq 6$.  Then there exists a subsequence of $(u_{0,n},u_{1,n})$ (still denoted $(u_{0,n},u_{1,n})$), a sequence of profiles $(V_0^j,V_1^j)_{j\in \mathbb{N}}\subset \dot{H}_x^{s_c}\times\dot{H}_x^{s_c-1}(\mathbb{R}^d)$, and a sequence of triples $(\epsilon_n^j,x_n^j,t_n^j)\in \mathbb{R}^+\times\mathbb{R}^d\times\mathbb{R}$, which are orthogonal in the sense that for every $j\neq j'$,
\begin{align*}
\frac{\epsilon_n^j}{\epsilon_n^{j'}}+\frac{\epsilon_n^{j'}}{\epsilon_n^j}+\frac{|t_n^j-t_n^{j'}|}{\epsilon_n^j}+\frac{|x_n^j-x_n^{j'}|}{\epsilon_n^j}\mathop{\longrightarrow}_{n\rightarrow\infty}
\infty,
\end{align*}
and for every $l\geq 1$, if 
\begin{align*}
V^j=\mathcal{W}(t)(V_0^j,V_1^j)\quad  and \quad
V_n^j(t,x)=\frac{1}{(\epsilon_n^j)}V^j\left(\frac{t-t_n^j}{\epsilon_n^j},\frac{x-x_n^j}{\epsilon_n^j}\right),
\end{align*}
then
\begin{align*}
(u_{0,n}(x),u_{1,n}(x))&=\sum_{j=1}^l (V_n^j(0,x),\partial_t V_n^j(0,x))+(w_{0,n}^l(x),w_{1,n}^l(x))
\end{align*}
with
\begin{align*}
\limsup_{n\rightarrow\infty}\lVert \mathcal{W}(t)(w_{0,n}^l,w_{1,n}^l)\rVert_{L_t^qL_x^r}\mathop{\longrightarrow}_{l\rightarrow\infty}0
\end{align*}
for every $(q,r)$ an $\dot{H}_x^{s_c}$-wave admissible pair with $q,r\in (2,\infty)$. For all $l\geq 1$, we also have, 
\begin{align*}
\lVert u_{0,n}\rVert_{\dot{H}_x^{s_c}}^2+\lVert u_{1,n}\rVert_{\dot{H}_x^{s_c-1}}^2&=\sum_{j=1}^l \left[\lVert V^j_0\rVert_{\dot{H}_x^{s_c}}^2+\lVert V^j_1\rVert_{\dot{H}_x^{s_c-1}}^2\right]+\lVert w_{0,n}^l\rVert_{\dot{H}_x^{s_c}}^2+\lVert w_{1,n}^l\rVert_{\dot{H}_x^{s_c-1}}^2\\
&\hspace{2.5in}+o(1),\quad n\rightarrow\infty. 
\end{align*}
\end{theorem}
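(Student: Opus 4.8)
The plan is to run the iterative profile-extraction scheme of Bahouri--G\'erard and Keraani, adapted to the critical Sobolev regularity $s_c=\frac{d-2}{2}$ and to dimensions $d\geq 6$. Two analytic inputs are needed: a \emph{refined Strichartz inequality}, which strengthens the linear bound implicit in \eqref{l6} into an estimate sensitive to frequency-and-space concentration, and the \emph{almost-orthogonality} (Pythagorean) identity for the $\dot H_x^{s_c}\times\dot H_x^{s_c-1}$ norm along the decomposition, which is the last display in the statement. I would establish these first and then build the decomposition by induction on the number of extracted profiles.

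For the refined estimate I would fix one $\dot H_x^{s_c}$-wave admissible pair, most naturally $(q,r)=(d+1,d+1)$ --- which is admissible for $d\geq 3$, lies in the open interior of the admissible region for $d\geq 6$, and is the scattering exponent appearing in the definition of a solution. Decomposing $\mathcal W(t)(f,g)$ into Littlewood--Paley pieces, applying the dispersive estimate of Proposition \ref{l4} and the $TT^*$ argument underlying \eqref{l6} block by block, and interpolating against a weaker low-regularity control of each block (of $L_t^\infty L_x^\infty$ or Besov type), I would obtain a bound of the schematic form
\begin{align*}
\lVert \mathcal W(t)(f,g)\rVert_{L_{t,x}^{d+1}}\ \lesssim\ \lVert (f,g)\rVert_{\dot H_x^{s_c}\times\dot H_x^{s_c-1}}^{\theta}\ \Big(\sup_{N\in 2^{\mathbb Z}}N^{\alpha}\lVert \mathcal W(t)P_N(f,g)\rVert_{L_t^\infty L_x^\infty}\Big)^{1-\theta}
\end{align*}
for suitable $\theta\in(0,1)$ and $\alpha$ fixed by scaling; in the non-radial higher-dimensional setting this step is likely to require a bilinear or square-function refinement. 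Read in contrapositive form this is the \emph{profile extraction lemma}: if $(\phi_{0,n},\phi_{1,n})$ is bounded by $A$ in $\dot H_x^{s_c}\times\dot H_x^{s_c-1}$ and $\limsup_n\lVert \mathcal W(t)(\phi_{0,n},\phi_{1,n})\rVert_{L_{t,x}^{d+1}}\geq\eta>0$, then, after passing to a subsequence, there are parameters $(\epsilon_n,x_n,t_n)\in\mathbb R^+\times\mathbb R^d\times\mathbb R$ and a nonzero $(V_0,V_1)\in\dot H_x^{s_c}\times\dot H_x^{s_c-1}$, arising as the weak limit in $\dot H_x^{s_c}\times\dot H_x^{s_c-1}$ of the evolved-and-rescaled data obtained by applying the inverse of the symmetry encoded by $(\epsilon_n,x_n,t_n/\epsilon_n)$, with the quantitative lower bound $\lVert (V_0,V_1)\rVert_{\dot H_x^{s_c}\times\dot H_x^{s_c-1}}\gtrsim\eta^{\beta}A^{-\gamma}$ for some $\beta,\gamma>0$. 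When $t_n/\epsilon_n$ stays bounded one normalizes $t_n=0$; the runaway cases $t_n/\epsilon_n\to\pm\infty$ are retained and produce profiles that are asymptotically free in one time direction.

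With the extraction lemma available I would build the decomposition inductively: set $(w_{0,n}^0,w_{1,n}^0)=(u_{0,n},u_{1,n})$; given $(w_{0,n}^{l-1},w_{1,n}^{l-1})$, stop if $\limsup_n\lVert \mathcal W(t)(w_{0,n}^{l-1},w_{1,n}^{l-1})\rVert_{L_{t,x}^{d+1}}\to 0$, and otherwise extract $(V_0^l,V_1^l)$ with parameters $(\epsilon_n^l,x_n^l,t_n^l)$, form $V_n^l$ as in the statement, and set $(w_{0,n}^l,w_{1,n}^l)=(w_{0,n}^{l-1},w_{1,n}^{l-1})-(V_n^l(0),\partial_tV_n^l(0))$. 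Three points then have to be verified. (a) \emph{Orthogonality of the parameters}: if at stage $l$ the parameters failed to be orthogonal to those of some earlier stage $j<l$, then, since $(V_0^l,V_1^l)$ is the weak limit of $w_n^{l-1}$ in the $l$-th frame and $w_n^{l-1}$ already has vanishing weak limit in the $j$-th frame (which differs from the $l$-th by a convergent symmetry), one would get $(V_0^l,V_1^l)=0$, a contradiction; hence all extracted parameters are pairwise orthogonal in the sense stated. (b) \emph{Pythagorean expansion}: using the weak convergence defining each profile together with the parameter orthogonality, which kills the cross terms $\langle|\nabla|^{s_c}V_n^j(0),|\nabla|^{s_c}V_n^{j'}(0)\rangle$ and the remainder cross terms as $n\to\infty$, one obtains
\begin{align*}
\lVert u_{0,n}\rVert_{\dot H_x^{s_c}}^2+\lVert u_{1,n}\rVert_{\dot H_x^{s_c-1}}^2=\sum_{j=1}^l\big[\lVert V_0^j\rVert_{\dot H_x^{s_c}}^2+\lVert V_1^j\rVert_{\dot H_x^{s_c-1}}^2\big]+\lVert w_{0,n}^l\rVert_{\dot H_x^{s_c}}^2+\lVert w_{1,n}^l\rVert_{\dot H_x^{s_c-1}}^2+o(1).
\end{align*}
(c) \emph{Exhaustion and smallness of the error}: the left-hand side being $\leq A^2$ for all $n,l$, the series $\sum_j[\lVert V_0^j\rVert_{\dot H_x^{s_c}}^2+\lVert V_1^j\rVert_{\dot H_x^{s_c-1}}^2]$ converges, so its terms tend to $0$; applying the quantitative lower bound of the extraction lemma to $(w_{0,n}^l,w_{1,n}^l)$ --- whose $\dot H_x^{s_c}\times\dot H_x^{s_c-1}$ norm remains $\lesssim A$ --- then forces $\limsup_n\lVert \mathcal W(t)(w_{0,n}^l,w_{1,n}^l)\rVert_{L_{t,x}^{d+1}}\to 0$ as $l\to\infty$. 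Finally, the claimed smallness for an arbitrary $\dot H_x^{s_c}$-wave admissible pair $(q,r)$ with $q,r\in(2,\infty)$ follows by interpolating this $L_{t,x}^{d+1}$ bound against the uniform-in-$n$ control of $\lVert \mathcal W(t)(w_{0,n}^l,w_{1,n}^l)\rVert_{S_{s_c}}$ furnished by \eqref{l6}.

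The step I expect to be the main obstacle is the refined Strichartz inequality in the non-radial, higher-dimensional regime: one must arrange the Littlewood--Paley and bilinear interactions so that the loss placed in the weak norm is genuinely summable, and so that the concentration scale $\epsilon_n$, the spatial center $x_n$ and the time shift $t_n$ are all extracted simultaneously in a scale-invariant fashion. A secondary point requiring care is the bookkeeping in (a) and (b): because a nonzero time-translation parameter is present, each profile must be identified as a weak limit of $\mathcal W(-t_n^j/\epsilon_n^j)$ applied to the data rescaled in the $j$-th frame, and the three-way split of the orthogonality of $(\epsilon_n^j,x_n^j,t_n^j)$ must be propagated through every cross term and through the remainder estimates.
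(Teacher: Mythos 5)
Your outline is a correct, if schematic, rendering of the direct Bahouri--G\'erard/Keraani extraction scheme carried out at regularity $s_c$: the refined Strichartz estimate, the quantitative extraction lemma, the inductive construction, the orthogonality and Pythagorean decoupling arguments, the exhaustion via summability of the profile norms, and the final interpolation against the uniform $S_{s_c}$ bound to cover all interior admissible pairs are all in the right places, and you correctly identify the refined (concentration-detecting) Strichartz inequality in the non-radial, high-dimensional setting as the genuinely hard step. The paper, however, takes a different and much shorter route: it does not redo the extraction machinery at regularity $s_c$ at all. It observes that the shifted sequence $(|\nabla|^{s_c-1}u_{0,n},\,|\nabla|^{s_c-1}u_{1,n})$ is bounded in the energy space $\dot{H}_x^1\times L_x^2$, applies the already-established energy-critical profile decomposition (Bahouri--G\'erard in $d=3$, extended to higher dimensions in \cite{Bulut}), and then pulls the decomposition back through $|\nabla|^{-(s_c-1)}$ --- which commutes with the propagator and with translations and conjugates cleanly with the scaling symmetry --- deducing the remainder smallness in the $\dot{H}_x^{s_c}$-admissible Strichartz norms from the energy-critical smallness via Sobolev embedding. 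Your approach buys self-containedness at the price of proving the refined Strichartz/inverse-Strichartz estimate from scratch, which is precisely the content of the external reference; the paper's reduction inherits all of that work from the energy-critical case essentially for free. So your proposal is not wrong, but it re-derives machinery the paper deliberately imports, and to be complete it would still owe a full proof of the refined estimate you only sketch.
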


For initial data in $\dot{H}_x^1\times L_x^2$, the profile decomposition for the wave equation was established by Bahouri and Gerard \cite{BahouriGerard} in dimension $3$ and was extended to higher dimensions by the author in \cite{Bulut}.  Roughly speaking, the proof of Theorem $\ref{l10}$ is obtained by observing that for any sequence of initial data $\{(u_{0,n},u_{1,n})\}\subset \dot{H}_x^{s_c}\times \dot{H}_x^{s_c-1}$, the sequence $\{(|\nabla|^{s_c-1}u_{0,n},|\nabla|^{s_c-1}u_{1,n})\}$ lies in the energy space $\dot{H}_x^1\times L_x^2$.  Applying the energy-critical profile decomposition to this new sequence, the result then follows from an application of the Sobolev embedding.  For more details, we refer the reader to \cite{BahouriGerard,Bulut}.

\subsection{Existence of minimal blow-up solutions.}
The first part in the ``concentration compactness + rigidity'' method introduced by Kenig and Merle \cite{KenigMerleNLS,KenigMerleNLW} consists of reducing the argument to the study of minimal blow-up solutions to (NLW).  Informally speaking, this reduction is a consequence of the observation that if Theorem $\ref{l2}$ fails, the above profile decomposition can be applied to study a minimizing sequence of blow-up solutions to (NLW) with respect to the $L_t^\infty(\dot{H}_x^{s_c}\times \dot{H}_x^{s_c-1})$ norm.  Through this analysis, one may extract a minimal blow-up solution which is then shown to posess an additional compactness property up to the symmetries of the equation.  

More precisely, we recall the following result from \cite{KenigMerleSupercritical}.
\begin{theorem} \cite{KenigMerleSupercritical}
\label{l11}
Suppose that Theorem $\ref{l2}$ fails.  Then there exists a solution $u:I\times\mathbb{R}^d\rightarrow\mathbb{R}$ to (NLW) with maximal interval of existence $I$,
\begin{align*}
(u,u_t)\in L_t^\infty(I;\dot{H}_x^{s_c}\times\dot{H}_x^{s_c-1}),\quad\textrm{and}\quad \lVert u\rVert_{L_{t,x}^{d+1}(I\times\mathbb{R}^d)}=\infty
\end{align*}
such that $u$ is a minimal blow-up solution in the following sense: for any solution $v$ with maximal interval of existence $J$ such that $\lVert v\rVert_{L_{t,x}^{d+1}(J\times\mathbb{R}^d)}=\infty$, we have
\begin{align*}
\sup_{t\in I} \lVert (u(t),u_t(t))\rVert_{\dot{H}_x^{s_c}\times\dot{H}_x^{s_c-1}}&\leq \sup_{t\in J} \lVert (v(t),v_t(t))\rVert_{\dot{H}_x^{s_c}\times\dot{H}_x^{s_c-1}}.
\end{align*}

Moreover, there exist $N:I\rightarrow \mathbb{R}^+$ and $x:I\rightarrow\mathbb{R}^d$ such that the set
\begin{align}
K&=\{(\frac{1}{N(t)}u(t,x(t)+\frac{x}{N(t)}),\,\frac{1}{N(t)^{2}}u_t(t,x(t)+\frac{x}{N(t)})):t\in I\},\label{l12}
\end{align}
has compact closure in $\dot{H}_x^{s_c}\times\dot{H}_x^{s_c-1}(\mathbb{R}^d)$.
\end{theorem}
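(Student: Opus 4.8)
The plan is to follow the concentration-compactness reduction of Kenig and Merle \cite{KenigMerleNLS,KenigMerleNLW,KenigMerleSupercritical}, using the linear profile decomposition of Theorem \ref{l10} together with the local well-posedness and stability theory to be established in Section 4. First I would set up the critical threshold: for $A\geq 0$, let $\mathcal{L}(A)$ be the supremum of $\lVert u\rVert_{L_{t,x}^{d+1}(I\times\mathbb{R}^d)}$ over all solutions $u$ to (NLW) on their maximal interval $I$ with $\sup_{t\in I}\lVert (u(t),u_t(t))\rVert_{\dot H_x^{s_c}\times\dot H_x^{s_c-1}}\leq A$. The small-data theory gives $\mathcal{L}(A)\lesssim A$ for small $A$, and the stability theory makes $\mathcal{L}$ finite on a maximal interval $[0,A_c)$ with $\mathcal{L}$ continuous there; the hypothesis that Theorem \ref{l2} fails forces $A_c\in(0,\infty)$. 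I would then choose blow-up solutions $u_n$ with $\sup_t\lVert (u_n,u_{n,t})\rVert_{\dot H_x^{s_c}\times\dot H_x^{s_c-1}}\to A_c$ and, after a time translation, arrange that $\lVert u_n\rVert_{L_{t,x}^{d+1}}$ diverges both on $(-\infty,0]$ and on $[0,\infty)$.

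Next I would apply Theorem \ref{l10} to the bounded sequence $(u_n(0),u_{n,t}(0))$ in $\dot H_x^{s_c}\times\dot H_x^{s_c-1}$, obtaining linear profiles $V^j=\mathcal{W}(t)(V_0^j,V_1^j)$ with orthogonal triples $(\epsilon_n^j,x_n^j,t_n^j)$ and a remainder whose linear evolution is asymptotically negligible in the $L_t^qL_x^r$ Strichartz norms. To each profile I associate a \emph{nonlinear profile} $U^j$: when $t_n^j/\epsilon_n^j$ stays bounded (so, after a shift, may be taken to vanish) $U^j$ is the maximal solution of (NLW) with data $(V_0^j,V_1^j)$; when $t_n^j/\epsilon_n^j\to\pm\infty$, $U^j$ is the solution scattering to $\mathcal{W}(t)(V_0^j,V_1^j)$ at the appropriate time infinity, which exists in the subthreshold regime. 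Rescaled and translated copies $U_n^j$ then furnish the candidate approximate solution $\sum_{j\leq l}U_n^j+\mathcal{W}(t)(w_{0,n}^l,w_{1,n}^l)$ to $u_n$.

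The heart of the argument is to show that exactly one profile survives. Using the orthogonality of the triples together with the Pythagorean decomposition of the critical norm in Theorem \ref{l10}, if no single profile carried the full mass $A_c^2$ then each $V^j$ would have critical norm bounded by some $\sqrt{A_c^2-\delta}<A_c$, hence each $U^j$ would be global with $\lVert U^j\rVert_{L_{t,x}^{d+1}}\leq\mathcal{L}(\sqrt{A_c^2-\delta})<\infty$; feeding the approximate solution into the stability theory of Section 4 would then bound $\lVert u_n\rVert_{L_{t,x}^{d+1}}$ uniformly in $n$, contradicting blow-up. Therefore there is a single profile $V^1$ with $\lVert V_0^1\rVert_{\dot H_x^{s_c}}^2+\lVert V_1^1\rVert_{\dot H_x^{s_c-1}}^2=A_c^2$, the remainder tends to zero, and the associated nonlinear profile $U^1$ must have $\lVert U^1\rVert_{L_{t,x}^{d+1}}=\infty$ (else $u_n$ would not blow up) while $\sup_t\lVert (U^1,U_t^1)\rVert_{\dot H_x^{s_c}\times\dot H_x^{s_c-1}}\leq A_c$; since $U^1$ is a blow-up solution, this supremum is in fact equal to $A_c$, so $U^1=u$ is the desired minimal blow-up solution, minimality being built into the definition of $A_c$. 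For the compactness statement, I would run the same argument on an arbitrary sequence $(u(t_n),u_t(t_n))$, $t_n\in I$: because $u$ blows up both forward and backward in time from every point of $I$, the profile decomposition of this sequence again forces a single profile carrying the full norm, which means $(u(t_n),u_t(t_n))$ becomes precompact in $\dot H_x^{s_c}\times\dot H_x^{s_c-1}$ after rescaling by some $N_n>0$ and translating by some $x_n\in\mathbb{R}^d$; extracting measurable $N(t)$, $x(t)$ yields that the set $K$ in \eqref{l12} has compact closure.

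The main obstacle is the superposition/stability step in the third paragraph. Making it rigorous requires (i) the stability theory in the critical space $\dot H_x^{s_c}\times\dot H_x^{s_c-1}$ with $L_{t,x}^{d+1}$ as the controlling norm, which is exactly what Section 4 provides and where the restriction to the cubic nonlinearity simplifies matters; (ii) careful bookkeeping of the distinct scales $\epsilon_n^j$, for which the scaling identity of Remark \ref{l9} for fractional derivatives is essential; and (iii) verifying that the nonlinear interactions between well-separated profiles, and between the profiles and the small linear remainder, are negligible in the relevant Strichartz norms — this decoupling, both of the $L_{t,x}^{d+1}$ norms and of the critical norms, is precisely where the asymptotic orthogonality of the triples $(\epsilon_n^j,x_n^j,t_n^j)$ enters in an essential way.
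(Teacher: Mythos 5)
Your proposal follows exactly the route the paper takes for this theorem: the paper does not prove it but defers to the Kenig--Merle concentration-compactness argument of \cite{KenigMerleSupercritical}, summarizing it as (i) extraction of a minimal blow-up solution via the profile decomposition of Theorem \ref{l10} together with the local well-posedness and stability theory of Section 4, and (ii) derivation of the compactness property \eqref{l12} from minimality. Your critical-threshold formulation, the passage to nonlinear profiles, the single-profile reduction via stability, and the rerun of the argument along an arbitrary time sequence to get precompactness of $K$ are all the standard steps of that scheme.

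There is, however, one concrete gap in your key step. You argue that if no single profile carries the full mass then each $V^j$ has critical norm at most $\sqrt{A_c^2-\delta}$, \emph{hence} $\lVert U^j\rVert_{L_{t,x}^{d+1}}\leq\mathcal{L}(\sqrt{A_c^2-\delta})$. But $\mathcal{L}(A)$ is defined via the bound $\sup_{t}\lVert (u(t),u_t(t))\rVert_{\dot H_x^{s_c}\times\dot H_x^{s_c-1}}\leq A$ over the \emph{entire} maximal interval, whereas the Pythagorean expansion of Theorem \ref{l10} only controls the norm of the \emph{initial data} $(V_0^j,V_1^j)$ of the nonlinear profile. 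Since $\sup_t\lVert\cdot\rVert_{\dot H_x^{s_c}\times\dot H_x^{s_c-1}}$ is not a conserved quantity (this is precisely what distinguishes the supercritical problem from the energy-critical one), a data norm below $A_c$ does not by itself place $U^j$ in the class governed by $\mathcal{L}(\sqrt{A_c^2-\delta})$. The repair, which is the delicate point in \cite{KenigMerleSupercritical}, is to propagate the decoupling of critical norms along the nonlinear flow: on any interval where the superposition $\sum_j U_n^j + \mathcal{W}(t)(w_{0,n}^l,w_{1,n}^l)$ approximates $u_n$ (via the stability theorem), the asymptotic orthogonality of the parameters gives
$\sum_j \lVert (U^j,\partial_t U^j)(\cdot)\rVert^2_{\dot H_x^{s_c}\times\dot H_x^{s_c-1}}\leq \sup_t\lVert (u_n,\partial_t u_n)(t)\rVert^2+o(1)\leq A_c^2+o(1)$
at the corresponding times, so that in the presence of at least two nontrivial profiles each $U^j$ satisfies the sup-in-time bound $\leq\sqrt{A_c^2-\delta}$ on its maximal interval and the inductive hypothesis applies. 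Without this additional bootstrap your invocation of $\mathcal{L}(\sqrt{A_c^2-\delta})$ is not justified; with it, the rest of your outline goes through as in the cited reference.
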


The above theorem was proved by Kenig and Merle in \cite{KenigMerleSupercritical} in three dimensions with radial initial data.  However, as pointed out in \cite{KenigLectureNotes,KenigLectureNotes2}, when a satisfactory local theory is present the proof is independent of the dimension and the assumption of radial symmetry.  We briefly summarize the main steps of the argument.  First, by means of the profile decomposition along with the local theory (local well-posedness and stability) discussed in Section 4 below, a minimal blow-up solution is extracted.  Then, the remainder of the proof consists of showing the compactness property ($\ref{l12}$), which is a consequence of the minimality.  For a detailed treatment, we refer the reader to the works \cite{KenigMerleNLW,KenigMerleSupercritical}.

\subsection{Three blow-up scenarios}
In view of Theorem $\ref{l11}$, if Theorem $\ref{l2}$ fails then there exists a minimal blow-up solution with the compactness property ($\ref{l12}$).  To obtain the desired contradiction, the next step in the argument is to show that no such blow-up solution can exist.  As we will see below, this failure of existence arises as a consequence of the compactness property ($\ref{l12}$). 
Before proceeding further, we now recall an equivalent formulation of ($\ref{l12}$) from \cite{KillipVisanSupercriticalNLW3D,KillipVisanSupercriticalNLWradial} which will be an essential tool for our analysis of blow-up solutions.
\begin{definition} A solution $u$ to (NLW) with time interval $I$ is said to be almost periodic modulo symmetries if $(u,u_t)\in L_t^\infty(I;\dot{H}_x^{s_c}\times\dot{H}_x^{s_c-1})$ and there exist functions $N:I\rightarrow\mathbb{R}^+$, $x:I\rightarrow\mathbb{R}^d$ and $C:\mathbb{R}^+\rightarrow\mathbb{R}^+$ such that for all $t\in I$ and $\eta>0$,
\begin{align*}
\int_{|x-x(t)|\geq C(\eta)/N(t)} ||\nabla|^{s_c}u(t,x)|^2+||\nabla|^{s_c-1}u_t(t,x)|^2dx&\leq \eta,
\end{align*}
and
\begin{align*}
\int_{|\xi|\geq C(\eta)N(t)} |\xi|^{2s_c}|\hat{u}(t,\xi)|^2+|\xi|^{2(s_c-1)}|\hat{u}_t(t,\xi)|^2d\xi&\leq \eta.
\end{align*}
\end{definition}
We will also record two consequences of almost periodicity from \cite{KillipVisanSupercriticalNLS,KillipVisanSupercriticalNLW3D}.
\begin{remark}
\label{l13}
If $u$ is an almost periodic solution modulo symmetries, then 
for each $\eta>0$ there exist constants $c_1(\eta),c_2(\eta)>0$ such that for all $t\in I$,
\begin{align*}
\int_{|x-x(t)|\geq c_1(\eta)/N(t)} |u(t,x)|^ddx+\int_{|x-x(t)|\geq c_1(\eta)/N(t)} |u_t(t,x)|^\frac{d}{2}dx\leq \eta.
\end{align*}
and also 
\begin{align}
\int_{|\xi|\leq c_2(\eta)N(t)} |\xi|^{2s_c}|\hat{u}(t,\xi)|^2+|\xi|^{2(s_c-1)}|\hat{u}_t(t,\xi)|^2d\xi\leq \eta.
\label{l14}
\end{align}
\end{remark}

The following theorem now shows that failure of Theorem $\ref{l2}$, in addition to implying the existence of a minimal blow-up solution (the consequence of Theorem $\ref{l11}$), also implies the existence of an almost periodic solution which belongs to one of three particular classes for which the associated function $N(t)$ is specified further.  Thus in order to prove Theorem $\ref{l2}$, it will suffice to show that such solutions cannot exist.
\begin{theorem} \cite{KillipVisanSupercriticalNLW3D}
\label{l15}
Suppose that Theorem $\ref{l2}$ fails.  Then there exists a solution $u:I\times\mathbb{R}^d\rightarrow\mathbb{R}$ to (NLW) with maximal interval of existence $I$ such that $u$ is almost periodic modulo symmetries,
\begin{align*}
(u,u_t)\in L_t^\infty(I;\dot{H}_x^{s_c}\times\dot{H}_x^{s_c-1}),\quad\textrm{and}\quad \lVert u\rVert_{L_{t,x}^{d+1}(I\times\mathbb{R}^d)}=\infty,
\end{align*}
and $u$ satisfies one of the following:
\begin{itemize}
\item (finite time blow-up solution) either $\sup I<\infty$ or $\inf I>-\infty$.
\item (soliton-like solution) $I=\mathbb{R}$ and $N(t)=1$ for all $t\in\mathbb{R}$.
\item (low-to-high frequency cascade solution) $I=\mathbb{R}$,
\begin{align*}
\inf_{t\in\mathbb{R}} N(t)\geq 1,\quad\textrm{and}\quad \limsup_{t\rightarrow\infty} N(t)=\infty.
\end{align*}
\end{itemize}
\label{l16}
\end{theorem}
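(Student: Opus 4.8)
The plan is to obtain Theorem~\ref{l16} as a refinement of Theorem~\ref{l11}, exploiting the scaling symmetry $u\mapsto u_\lambda$ together with time translation and time reversal, following the scheme of Killip--Visan \cite{KillipVisanSupercriticalNLS,KillipVisanSupercriticalNLW3D}. By Theorem~\ref{l11}, failure of Theorem~\ref{l2} yields a minimal blow-up solution $u$ on its maximal interval $I$, almost periodic modulo symmetries with modulation parameters $N:I\rightarrow\mathbb{R}^+$, $x:I\rightarrow\mathbb{R}^d$, and $\lVert u\rVert_{L_{t,x}^{d+1}(I\times\mathbb{R}^d)}=\infty$. We also record that $u$ is nontrivial, so by the local well-posedness theory of Section~4 one has $\inf_{t\in I}\lVert(u(t),u_t(t))\rVert_{\dot{H}_x^{s_c}\times\dot{H}_x^{s_c-1}}>0$ (otherwise small-data theory would make $u$ global with finite $L_{t,x}^{d+1}$ norm).

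The first step is a \emph{local constancy} lemma for the modulation parameters: after replacing $N$ by a comparable function there is $\epsilon_0=\epsilon_0(u)>0$ such that $N(s)\sim_u N(t)$ and $|x(s)-x(t)|\lesssim_u N(t)^{-1}$ whenever $|s-t|\leq\epsilon_0 N(t)^{-1}$. This is proved by contradiction: were it false, one could use the scaling and spatial translation symmetries to normalize $N\equiv 1$, $x\equiv 0$ at reference times along a sequence, extract from the compact set $K$ in \eqref{l12} a strong limit of the corresponding data in $\dot{H}_x^{s_c}\times\dot{H}_x^{s_c-1}$, solve the limiting Cauchy problem on a fixed time interval, and then deduce from the stability theory of Section~4 that $N$ must in fact stay $\sim 1$ on that interval, a contradiction. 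The consequence we use below is that $N$ is bounded above and below on every compact subinterval of $I$; in particular $\sup_I N=\infty$ (resp.\ $\inf_I N=0$) can occur only in the limit $t\to\sup I$ or $t\to\inf I$.

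We then carry out the trichotomy. If $\sup I<\infty$ or $\inf I>-\infty$, then $u$ is by definition a finite-time blow-up solution and there is nothing further to prove. Assume therefore $I=\mathbb{R}$. One first shows $\inf_{t\in\mathbb{R}}N(t)>0$: by the consequence above, this reduces to excluding $N(t_n)\to 0$ along some sequence $t_n\to\pm\infty$, which is ruled out by a (soft) argument combining the reduced ``no-waste'' Duhamel formula for almost periodic solutions, the precompactness of $K$, and the lower bound $\inf_t\lVert(u,u_t)(t)\rVert_{\dot{H}_x^{s_c}\times\dot{H}_x^{s_c-1}}>0$, since such a sequence of times would force this norm to vanish. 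Having $\inf_{\mathbb{R}}N>0$, rescale so that $\inf_{\mathbb{R}}N\geq 1$. If moreover $\sup_{\mathbb{R}}N<\infty$, then since $K$ is precompact and $N(t)$ lies in a fixed compact subset of $\mathbb{R}^+$, the translated orbit $\{(u(t,x(t)+\cdot),u_t(t,x(t)+\cdot)):t\in\mathbb{R}\}$ is itself precompact in $\dot{H}_x^{s_c}\times\dot{H}_x^{s_c-1}$; hence $u$ is almost periodic with $N(t)\equiv 1$, the soliton-like case. If instead $\sup_{\mathbb{R}}N=\infty$, then by local constancy the supremum is attained only in a limit, i.e.\ $\limsup_{t\to+\infty}N(t)=\infty$ or $\limsup_{t\to-\infty}N(t)=\infty$; applying time reversal if necessary places $u$ in the low-to-high frequency cascade case.

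The logical skeleton above is by now standard; the main work lies in checking that its inputs hold in the present setting — principally that the local well-posedness and stability theory of Section~4 is available, uniformly in $\dot{H}_x^{s_c}\times\dot{H}_x^{s_c-1}$, for the cubic nonlinearity in dimensions $d\geq 6$ (this underlies the local constancy lemma), and that the profile decomposition Theorem~\ref{l10} holds in these dimensions (this underlies Theorem~\ref{l11}). Within the reduction itself, the one genuinely delicate — though soft — point is the exclusion of $\inf_{\mathbb{R}}N=0$ when $I=\mathbb{R}$, via the Duhamel-formula argument indicated above; all of this goes through as in \cite{KillipVisanSupercriticalNLW3D}.
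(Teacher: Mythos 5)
The paper offers no proof of this theorem: it is quoted from Killip--Visan, with the remark that the argument of \cite{KillipVisanECritical,KillipVisanSupercriticalNLW3D} carries over verbatim. Your sketch, however, deviates from that argument at a point where it genuinely breaks down. The problem is your claim that, for the minimal blow-up solution $u$ produced by Theorem \ref{l11} with $I=\mathbb{R}$, one has $\inf_{\mathbb{R}}N>0$, justified by saying that $N(t_n)\to 0$ would force $\lVert(u(t_n),u_t(t_n))\rVert_{\dot H^{s_c}\times\dot H^{s_c-1}}$ to vanish. It would not: by the scale invariance \eqref{l1}, this norm equals the norm of the corresponding element of the compact set $K$ in \eqref{l12} and is therefore bounded below uniformly in $t$ \emph{regardless} of the behavior of $N(t)$, so there is no tension between $N(t_n)\to 0$ and the nontriviality of $u$. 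Nor does local constancy help: it only constrains the modulus of continuity of $N$ on windows of length $\sim N(t)^{-1}$, which is perfectly compatible with, say, $N(t)\sim(1+|t|)^{-1}$ on all of $\mathbb{R}$. In short, there is no soft argument excluding $\inf_{\mathbb{R}}N=0$ for the solution handed to you by Theorem \ref{l11}, and your trichotomy cannot be established for that solution directly.

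The actual Killip--Tao--Visan reduction sidesteps this by \emph{constructing a new} almost periodic solution rather than classifying the given one: one chooses times $t_n$ according to the behavior of the oscillation functional $\operatorname{osc}(T)$ and of $a(t_0)=\tfrac{N(t_0)}{\sup_{t<t_0}N(t)}+\tfrac{N(t_0)}{\sup_{t>t_0}N(t)}$, forms the normalized translates-and-rescalings of $(u(t_n),u_t(t_n))$, extracts a strong $\dot H^{s_c}\times\dot H^{s_c-1}$ limit from $K$, and uses the stability theorem of Section 4 to show the limiting data evolve into an almost periodic blow-up solution whose frequency scale satisfies $N\equiv 1$ or $N\geq N(0)=1$ \emph{by construction}. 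The pieces of your sketch that are sound --- the local constancy lemma, the observation that a finite endpoint of $I$ already yields the first scenario, that $0<\inf N\le\sup N<\infty$ lets one take $N\equiv 1$, and that $\sup N=\infty$ together with local constancy and time reversal yields $\limsup_{t\to+\infty}N=\infty$ --- all survive, but they must be applied to this newly constructed solution, not to the original $u$. As written, the step ruling out $\inf_{\mathbb{R}}N=0$ is a genuine gap.
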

In the context of the mass critical nonlinear Schr\"odinger equation, a more refined version of this theorem was proved by Killip, Tao and Visan in \cite{KillipTaoVisanCubic}.  The version that we use here was obtained by Killip and Visan in \cite{KillipVisanECritical}.  As remarked in \cite{KillipVisanSupercriticalNLW3D}, the argument applies equally to the present NLW setting.

\subsection{The contradiction}
We conclude our proof of Theorem $\ref{l2}$ by showing that each of the scenarios identified in Theorem $\ref{l15}$ cannot occur.  

The key ingredient that we use to rule out each of these scenarios is the conservation of energy.  However, we note that in our current setting we do not have immediate access to the finiteness of energy, since it has scaling below the critical regularity.  Nevertheless, in our analysis of each scenario, this obstruction is overcome with an observation that the solutions in that case do indeed have finite energy, due to the particular properties they possess.   We then exploit the conservation of energy in a manner well-suited to each scenario to obtain the desired contradiction.

We now briefly describe how we exclude each possible scenario in Theorem $\ref{l15}$:

We first consider the finite time blow-up solution.  In this case, our arguments are in the spirit of related results in \cite{KenigMerleNLW, KenigMerleSupercritical}.  We also note that a similar approach is taken in \cite{KillipVisanSupercriticalNLW3D}.  The key observation here is that when the maximal interval of existence of a solution $u$ is finite, the finite speed of propagation forces the supports of $u$ and $u_t$ to be localized to a ball which shrinks to $0$ as one approaches the blow-up time (see Lemma \ref{l49}).  We then show that the energy $E(u(t),u_t(t))$ tends to $0$ as $t$ tends to the blow-up time, contradicting the construction of $u$ as a blow-up solution.

We next study the remaining two scenarios, the soliton-like solution and the low-to-high frequency cascade.  In these cases, as in \cite{KillipVisanSupercriticalNLS,KillipVisanSupercriticalNLW3D}, we prove that the solutions possess an additional decay property: for almost periodic solutions with the function $N(t)$ bounded away from zero, the a priori bound $(u,u_t)\in L_t^\infty(\dot{H}_x^{s_c}\times \dot{H}_x^{s_c-1})$ allows us to obtain the bound $(u,u_t)\in L_t^\infty(\dot{H}_x^{1-\epsilon}\times \dot{H}_x^{-\epsilon})$ for some $\epsilon>0$ (see Theorem $\ref{l56}$ for further details).  In the NLS context the corresponding result was obtained in \cite{KillipVisanECritical,KillipVisanSupercriticalNLS},  while for the energy-supercritical NLW in $d=3$, see \cite{KillipVisanSupercriticalNLW3D}.

A main ingredient in the proof of the additional decay property is the following Duhamel formula, which states that if $u$ is an almost periodic solution, the linear components of the evolutions $u$ and $u_t$ vanish as $t$ approaches the endpoints of $I$.  In the context of the mass critical NLS, this formula was introduced in \cite{TaoVisanZhangMinimalMass} (see also \cite{KillipVisanClayNotes} for further discussion).  We recall the version that we use here from \cite{KillipVisanSupercriticalNLW3D}.
\begin{lemma} \cite{KillipVisanSupercriticalNLW3D,TaoVisanZhangMinimalMass}
\label{l17}
Let $u:I\times\mathbb{R}^d\rightarrow\mathbb{R}$ be a solution to (NLW) with maximal interval of existence $I$ which is almost periodic modulo symmetries.  Then for all $t\in I$,
\begin{align}
\nonumber&\bigg(\int_t^T \frac{\sin((t-t')|\nabla|)}{|\nabla|}F(u(t'))dt',\int_t^T \cos((t-t')|\nabla|)F(u(t'))dt'\bigg)\\
&\hspace{3in}\mathop{\rightharpoonup}_{T\rightarrow \sup I} (u(t),u_t(t)),
\label{l18}
\end{align}
and
\begin{align}
\nonumber &\bigg(-\int_T^t \frac{\sin((t-t')|\nabla|)}{|\nabla|}F(u(t'))dt',-\int_T^t \cos((t-t')|\nabla|)F(u(t'))dt'\bigg)\\
&\hspace{3in}\mathop{\rightharpoonup}_{T\rightarrow \inf I} (u(t),u_t(t)).
\end{align}
weakly in $\dot{H}_x^{s_c}\times \dot{H}_x^{s_c-1}$.
\end{lemma}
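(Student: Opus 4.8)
The plan is to shift the base point of the Duhamel formula from $0$ to $T$ and then to prove that the linear part of the evolution at time $T$ converges weakly to zero as $T$ approaches the relevant endpoint of $I$. By time-translation invariance of (NLW), Duhamel's formula written about the initial time $T$ can be rearranged, using $\int_T^t=-\int_t^T$, into the identity
\begin{align*}
&\bigg(\int_t^T\frac{\sin((t-t')|\nabla|)}{|\nabla|}F(u(t'))\,dt',\ \int_t^T\cos((t-t')|\nabla|)F(u(t'))\,dt'\bigg)\\
&\qquad=(u(t),u_t(t))-\big(\mathcal{W}(t-T)(u(T),u_t(T)),\ \partial_t\mathcal{W}(t-T)(u(T),u_t(T))\big),
\end{align*}
valid for all $t,T\in I$. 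Consequently, both limits asserted in the lemma follow from the single statement that
\begin{align*}
\big(\mathcal{W}(t-T)(u(T),u_t(T)),\ \partial_t\mathcal{W}(t-T)(u(T),u_t(T))\big)\ \mathop{\rightharpoonup}_{T\to\sup I}\ 0
\end{align*}
weakly in $\dot H_x^{s_c}\times\dot H_x^{s_c-1}$ (and likewise as $T\to\inf I$); we argue the first, the second being symmetric.

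Write $H:=\dot H_x^{s_c}\times\dot H_x^{s_c-1}$, equipped with the inner product $\langle(f_0,f_1),(g_0,g_1)\rangle_H:=\langle|\nabla|^{s_c}f_0,|\nabla|^{s_c}g_0\rangle_{L_x^2}+\langle|\nabla|^{s_c-1}f_1,|\nabla|^{s_c-1}g_1\rangle_{L_x^2}$. A direct computation with the Fourier multipliers $\cos(\tau|\xi|),\sin(\tau|\xi|)$ shows that $\mathcal{W}(\tau)$, acting on pairs, is a one-parameter unitary group on $H$; in particular $\mathcal{W}(\tau)^\ast=\mathcal{W}(-\tau)$, and since $u$ is almost periodic the family $\{\mathcal{W}(t-T)(u(T),u_t(T))\}_{T\in I}$ is bounded in $H$. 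It therefore suffices to test against $h=(h_0,h_1)$ from the dense subclass of $H$ consisting of pairs with $\hat h_0,\hat h_1\in C_0^\infty(\mathbb{R}^d\setminus\{0\})$; fix such an $h$, with $\supp\hat h_0\cup\supp\hat h_1\subset\{R^{-1}\leq|\xi|\leq R\}$. Transferring the propagator to the test function, the claim becomes
\begin{align*}
\langle(u(T),u_t(T)),(\alpha_T,\beta_T)\rangle_H\ \mathop{\longrightarrow}_{T\to\sup I}\ 0,\qquad(\alpha_T,\beta_T):=\mathcal{W}(T-t)h,
\end{align*}
where the crucial point is that $\widehat{\alpha_T},\widehat{\beta_T}$ remain supported in the fixed annulus $\{R^{-1}\leq|\xi|\leq R\}$ for every $T$, since the free evolution preserves Fourier supports.

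To prove this, fix $\eta>0$ and let $B_T:=\{x:|x-x(T)|<C(\eta)/N(T)\}$, the concentration ball furnished by almost periodicity. Splitting each of the two $L_x^2$-integrals in $\langle(u(T),u_t(T)),(\alpha_T,\beta_T)\rangle_H$ over $B_T$ and over $B_T^c$: on $B_T^c$, Cauchy--Schwarz together with the almost-periodicity bound $\int_{|x-x(T)|\geq C(\eta)/N(T)}\big(||\nabla|^{s_c}u(T)|^2+||\nabla|^{s_c-1}u_t(T)|^2\big)\,dx\leq\eta$ and $\|(\alpha_T,\beta_T)\|_H=\|h\|_H$ produce a contribution $\lesssim\sqrt\eta\,\|h\|_H$; on $B_T$, H\"older's inequality with a fixed exponent $p>2$, the volume bound $|B_T|\sim(C(\eta)/N(T))^d$, the uniform bound $\||\nabla|^{s_c}u(T)\|_{L_x^2}+\||\nabla|^{s_c-1}u_t(T)\|_{L_x^2}\lesssim1$, and the dispersive estimate of Proposition \ref{l4} applied to the free evolutions of the Schwartz functions $|\nabla|^{s_c}h_0$ and $|\nabla|^{s_c-1}h_1$ (whose Fourier supports lie in the fixed annulus, so that the Sobolev norms on the right-hand side of Proposition \ref{l4} are finite) produce a contribution $\lesssim_{h,\eta}N(T)^{-d(\frac12-\frac1p)}\,|T-t|^{-(d-1)(\frac12-\frac1p)}$. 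When $\sup I=\infty$ and $\liminf_{t\to\infty}N(t)>0$ -- in particular for the soliton-like solution, where $N\equiv1$, and for the low-to-high frequency cascade, where $N\geq1$, which are precisely the settings in which this lemma is applied -- this last quantity tends to $0$ as $T\to\infty$, so $\limsup_{T\to\sup I}|\langle(u(T),u_t(T)),(\alpha_T,\beta_T)\rangle_H|\lesssim\sqrt\eta\,\|h\|_H$, and letting $\eta\downarrow0$ completes this case. When instead $\sup I<\infty$, $u$ is a finite-time blow-up solution, for which $N(t)\to\infty$ as $t\to\sup I$; since $\alpha_T,\beta_T$ are Fourier-supported in the fixed set $\{|\xi|\leq R\}$, the frequency-concentration part of almost periodicity yields $\int_{|\xi|\leq R}\big(|\xi|^{2s_c}|\hat u(T,\xi)|^2+|\xi|^{2(s_c-1)}|\hat u_t(T,\xi)|^2\big)\,d\xi\to0$, whence $\langle(u(T),u_t(T)),(\alpha_T,\beta_T)\rangle_H\to0$ directly.

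The principal obstacle is the estimate over the concentration ball $B_T$: one has to win the competition between the spatial concentration scale $N(T)^{-1}$ of the profile $u(T)$ and the dispersive time-decay $|T-t|^{-(d-1)(\frac12-\frac1p)}$ of the free evolution of the test datum. This is exactly where a lower bound on $N(t)$ enters -- and it is available in every scenario to which the lemma is applied -- with the divergence of the characteristic frequency playing the analogous role when the lifespan is finite. The remaining ingredients, namely the unitarity of $\mathcal{W}(\tau)$ on $\dot H_x^{s_c}\times\dot H_x^{s_c-1}$, the density of the chosen test class, and the legitimacy of applying Proposition \ref{l4} to free evolutions of Schwartz functions with Fourier support in a fixed annulus (so that the extra factors $|\nabla|^{\pm s_c},|\nabla|^{\pm(s_c-1)}$ appearing along the way are harmless), are routine.
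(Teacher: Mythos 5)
The paper does not prove Lemma \ref{l17}; it is quoted verbatim from \cite{KillipVisanSupercriticalNLW3D,TaoVisanZhangMinimalMass}, so there is no internal proof to compare against. Your argument is, in substance, the standard one from those references, and it is correct: the reduction via Duhamel based at $T$ to the weak vanishing of $\mathcal{W}(t-T)(u(T),u_t(T))$, the unitarity of $\mathcal{W}$ on $\dot H_x^{s_c}\times\dot H_x^{s_c-1}$ so that the propagator can be moved onto a test pair with Fourier support in a fixed annulus, the splitting into the concentration ball and its complement, and the competition between $|B_T|^{\frac12-\frac1p}\sim (C(\eta)/N(T))^{d(\frac12-\frac1p)}$ and the dispersive decay $|T-t|^{-(d-1)(\frac12-\frac1p)}$ are all exactly the right ingredients. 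The finite-lifespan case via $N(T)\to\infty$ and the frequency-localization bound (\ref{l14}) is also handled correctly (and there is no circularity with Lemma \ref{l48}, whose proof rests only on the local theory).

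The one point where your write-up falls short of the statement as given is generality: the lemma is asserted for an arbitrary almost periodic solution, whereas you only treat $\sup I<\infty$ and $\sup I=\infty$ with $\liminf_{T\to\infty}N(T)>0$. You correctly observe that these cover every application in this paper (where $N(t)\geq 1$ throughout), but the remaining case --- $\sup I=\infty$ with $N(T_n)\to 0$ along some sequence --- is not actually harder and is worth a sentence: if $N(T_n)\to 0$, then by (\ref{l14}) the mass of $(u(T_n),u_t(T_n))$ concentrates at frequencies $\lesssim c_2(\eta)N(T_n)\to 0$, which eventually lie below the lower edge $R^{-1}$ of the Fourier support of the test pair, so the pairing again tends to zero; since every sequence $T_n\to\sup I$ has a subsequence along which either $N(T_{n_k})\to 0$ or $N(T_{n_k})$ is bounded below, combining this with your dispersive argument and a sub-subsequence argument yields the full statement. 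With that addendum the proof is complete as stated.
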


Arguing as in \cite{KillipVisanSupercriticalNLS,KillipVisanSupercriticalNLW3D}, we prove the additional decay property as follows:
\begin{itemize}
\item (Lemma $\ref{l58}$) We first refine the bound $u\in L_t^\infty L_x^d$ (which is immediate from the Sobolev embedding and the a priori assumption $u\in L_t^\infty \dot{H}_x^{s_c}$) to $L_t^\infty L_x^p$ for some $p<d$.  In particular, we use a bootstrap argument to bound the low frequencies of $u$ via Lemma $\ref{l17}$, while the high frequencies are bounded by the a priori bound.  We note that this argument imposes the restriction $p>2(d-1)/(d-3)$.
\item (Lemma $\ref{l60}$) We next use this $L_t^\infty L_x^p$ bound to improve bounds of the form $(u,u_t)\in L_t^\infty(\mathbb{R};\dot{H}_x^{s}\times\dot{H}_x^{s-1})$ to $(u,u_t)\in L_t^\infty(\mathbb{R};\dot{H}_x^{s-s_0}\times\dot{H}_x^{s-1-s_0})$ for some $s_0>0$.  This is accomplished by using the double Duhamel technique \cite{CKSTT,Tao}.  More precisely, we consider the inner product of the forward-in-time Duhamel formula with its backward-in-time counterpart given in Lemma $\ref{l17}$, and use the dispersive estimate.  When $p$ is such that the resulting integrals are convergent, this gives the desired improvement.  We note that this argument imposes the restriction $p<d-1$.
\item (Theorem $\ref{l56}$) Once we obtain the second step, we iterate the argument, starting with the a priori bound $(u,u_t)\in L_t^\infty (\dot{H}_x^{s_c}\times\dot{H}_x^{s_c-1})$, to obtain the desired decay $L_t^\infty(\dot{H}_x^{1-\epsilon}\times\dot{H}_x^{-\epsilon})$ for some $\epsilon>0$.  In particular, we obtain that the energy is finite.
\end{itemize}

We remark that the balance between the bounds provided by Lemma $\ref{l58}$ and the bound required by Lemma $\ref{l60}$ is the source of our restriction to dimensions $d\geq 6$.  As we noted above, Lemma $\ref{l58}$ provides the $L_t^\infty L_x^p$ bounds for $p>2(d-1)/(d-3)$, while Lemma $\ref{l60}$ requires this bound with $p<d-1$.  These conditions on $p$ impose the restriction $d\geq 6$.

We now return to the study of the two remaining blow-up scenarios: the soliton-like solution and the low-to-high frequency cascade solution.  

To preclude the soliton-like solution, we note that the finite speed of propagation implies a bound on the growth of $x(t)$ (see Lemma $\ref{l85}$), while the almost periodicity gives a uniform bound from below on the $L_t^4([s,s+1];L_x^4(\mathbb{R}^d))$ norm (see Lemma $\ref{l82}$).  The latter bound is closely related to a similar bound in \cite{KillipVisanSupercriticalNLW3D}.  However, we point out that in \cite{KillipVisanSupercriticalNLW3D} the bound is based on the $L_x^d$ norm, while our estimate is obtained via the $L_x^{2d/(d-2)}$ norm.  This allows us to use the dispersive estimate to control the linear propagator, rather than using the Strichartz estimate and a bootstrap argument.  Arguing as in \cite{KillipVisanSupercriticalNLW3D}, we then obtain a contradiction via the Morawetz identity by combining the bound on $x(t)$ with the $L_{t,x}^4$ bound and the finiteness of energy.

To conclude, as in the soliton-like solution, our preclusion of the low-to-high frequency cascade scenario is also based on the additional decay result.  We argue in a similar spirit as in \cite{KillipVisanSupercriticalNLS} to show that the energy tends to $0$ as $N(t)$ approaches infinity.  Since the energy is conserved, this contradicts our construction of $u$ as a blow-up solution.

\parskip=8pt
\section{Review of the local theory}

In this section, we review the standard local theory: local well-posedness and stability theorems for (NLW).  The versions that we present here are in the spirit of \cite{KenigMerleNLW,KenigMerleSupercritical,KillipVisanSupercriticalNLS,KillipVisanSupercriticalNLWradial,TaoVisan}.  

We note that the product structure of the cubic nonlinearity $F(u)=|u|^2u$ plays an important role in our arguments.  In particular, the necessary estimates on the nonlinearity reduce to the following product rule for fractional derivatives; see for instance \cite{ChristWeinstein,KenigPonceVega}.
\begin{lemma}\label{l19} 
For all $s\geq 0$ we have
\begin{align*}
\lVert |\nabla|^s(fg)\rVert_{L_x^p}\leq \lVert |\nabla|^s f\rVert_{L_x^{p_1}}\lVert g\rVert_{L_x^{p_2}}+\lVert f\rVert_{L_x^{p_3}}\lVert |\nabla|^s g\rVert_{L_x^{p_4}},
\end{align*}
where $1<p_1,p_4<\infty$ and $1<p,p_2,p_3\leq\infty$ satisfy $\frac{1}{p}=\frac{1}{p_1}+\frac{1}{p_2}=\frac{1}{p_3}+\frac{1}{p_4}$.
\end{lemma}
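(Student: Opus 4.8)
The plan is to establish Lemma~\ref{l19} by the Littlewood--Paley paraproduct method, which is the route taken in \cite{ChristWeinstein,KenigPonceVega}; the inequality is a form of the Kato--Ponce (fractional Leibniz) estimate. Since the case $s=0$ is just Hölder's inequality, assume $s>0$. Expanding $f=\sum_N P_Nf$ and $g=\sum_N P_Ng$ over dyadic $N$, I would split
\begin{align*}
fg=\Pi_{\mathrm{hl}}(f,g)+\Pi_{\mathrm{lh}}(f,g)+\Pi_{\mathrm{hh}}(f,g)
\end{align*}
according to whether the frequency of $f$ dominates, is dominated by, or is comparable to that of $g$; concretely $\Pi_{\mathrm{hl}}(f,g)=\sum_N(P_Nf)(P_{\le N/8}g)$, $\Pi_{\mathrm{lh}}$ is its mirror image with $f$ and $g$ interchanged, and $\Pi_{\mathrm{hh}}(f,g)=\sum_N(P_Nf)(\widetilde P_Ng)$ with $\widetilde P_N:=P_{N/8<\cdot\le 8N}$. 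The virtue of the decomposition is that in each summand one controls the Fourier support of the output relative to the inputs, hence knows which factor $|\nabla|^s$ effectively falls on. One also notes that $\tfrac{1}{p}=\tfrac{1}{p_1}+\tfrac{1}{p_2}$ with $p_1<\infty$ (and likewise via the other triple) forces $1<p<\infty$, so the Littlewood--Paley square function and Calderón--Zygmund theory are available in $L_x^p$ throughout.

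For the unbalanced pieces I would treat $\Pi_{\mathrm{hl}}$, the estimate for $\Pi_{\mathrm{lh}}$ being identical after replacing $(p_1,p_2)$ by $(p_4,p_3)$ and swapping $f\leftrightarrow g$. Each summand $(P_Nf)(P_{\le N/8}g)$ has Fourier support in $\{|\xi|\sim N\}$, so there $|\nabla|^s$ coincides with $N^s$ times a zeroth-order multiplier; more precisely, $|\nabla|^s\big[(P_Nf)(P_{\le N/8}g)\big]$ is a bilinear Fourier multiplier applied to $(|\nabla|^sP_Nf,\,P_{\le N/8}g)$ whose symbol $\tfrac{|\xi+\eta|^s}{|\xi|^s}$, restricted to $\{|\eta|\le |\xi|/4\}$, lies in the Coifman--Meyer class uniformly in $N$. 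Summing in $N$, the Coifman--Meyer multiplier theorem yields $\lVert |\nabla|^s\Pi_{\mathrm{hl}}(f,g)\rVert_{L_x^p}\lesssim \lVert |\nabla|^sf\rVert_{L_x^{p_1}}\lVert g\rVert_{L_x^{p_2}}$ for $1<p_1<\infty$, $1<p_2\le\infty$. If one prefers to avoid invoking that theorem, the same bound follows by hand: Bernstein gives $\lVert |\nabla|^s[(P_Nf)(P_{\le N/8}g)]\rVert_{L_x^p}\lesssim \lVert |\nabla|^sP_Nf\rVert_{L_x^{p_1}}\lVert g\rVert_{L_x^{p_2}}$ for each $N$, and since the summands are frequency-localized to essentially disjoint annuli the sum over $N$ is recombined by the Littlewood--Paley inequality in $L_x^p$.

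For the balanced piece $\Pi_{\mathrm{hh}}$, the summand $(P_Nf)(\widetilde P_Ng)$ has Fourier support only in $\{|\xi|\lesssim N\}$, but its two input frequencies are both $\sim N$, so one is free to move $|\nabla|^s$ onto either factor --- this is precisely what makes \emph{both} Hölder triples in the statement relevant. Writing $|\nabla|^s\Pi_{\mathrm{hh}}(f,g)=\sum_M P_M|\nabla|^s\big[\sum_{N\gtrsim M}(P_Nf)(\widetilde P_Ng)\big]$, applying the Littlewood--Paley square function, using $\lVert P_M|\nabla|^s(\cdot)\rVert_{L_x^p}\lesssim M^s\lVert\cdot\rVert_{L_x^p}$ together with the summable geometric factor $\sum_{M\lesssim N}(M/N)^s\lesssim 1$ (where $s>0$ is used to avoid a logarithmic loss), and Cauchy--Schwarz on the dyadic pieces of $f$ and $g$, one obtains $\lVert |\nabla|^s\Pi_{\mathrm{hh}}(f,g)\rVert_{L_x^p}\lesssim \lVert |\nabla|^sf\rVert_{L_x^{p_1}}\lVert g\rVert_{L_x^{p_2}}$, or the analogous bound with the derivative on $g$. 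The main obstacle, and the only genuinely delicate point, is the presence of $L^\infty$ among the admissible exponents: when $p_2=\infty$ (resp.\ $p_3=\infty$) the square function on the $L^\infty$ factor in the high--high interaction is unavailable, which is exactly why the derivative-carrying exponents $p_1,p_4$ must lie strictly inside $(1,\infty)$. I would dispatch this endpoint either through the BMO version of the Coifman--Meyer estimate combined with interpolation, or via a direct Calderón--Zygmund kernel bound for the paraproduct as carried out in \cite{ChristWeinstein,KenigPonceVega}; since the lemma is classical and used here only as an off-the-shelf tool, I would in practice run the paraproduct argument in the range of finite exponents and cite these references for the endpoint cases.
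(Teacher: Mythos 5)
The paper does not prove Lemma~\ref{l19} at all; it is quoted as a known fractional Leibniz (Kato--Ponce) estimate with a pointer to \cite{ChristWeinstein,KenigPonceVega}, and your paraproduct sketch is precisely the argument of those references, correctly identifying the high--low/low--high/high--high trichotomy, the role of $s>0$ in summing the balanced piece, and the reason $p_1,p_4$ must be finite while $p_2,p_3$ may be infinite. Your outline is sound and consistent with how the lemma is used here as an off-the-shelf tool (modulo the implicit constant, which the paper's ``$\leq$'' elides), so there is nothing to correct.
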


In the following two lemmas, using Lemma $\ref{l19}$, we obtain the estimates that will help us control the nonlinear term in establishing the local well-posedness and stability results.

\begin{lemma} Let $d\geq 6$ be given.  Then the following estimate holds:
\begin{align}
\nonumber \lVert |\nabla|^{\frac{d^2-4d+1}{2(d-1)}}(fg)\rVert_{N_{\frac{d-3}{2(d-1)}}}&\lesssim \lVert |\nabla|^\frac{d^2-4d+1}{2(d-1)} f\rVert_{S_{\frac{d+1}{2(d-1)}}}\lVert g\rVert_{L_{t,x}^\frac{d+1}{2}}\\
\nonumber &\hspace{0.2in}+\lVert |\nabla|^\frac{d^2-4d+1}{2(d-1)} f\rVert_{S_{\frac{d+1}{2(d-1)}}}\lVert |\nabla|^\frac{d^2-4d+1}{2(d-1)} g\rVert_{L_t^\frac{d+1}{2}L_x^\frac{2d(d^2-1)}{d^3+d^2-7d+1}}
\end{align}
\label{l20}
\end{lemma}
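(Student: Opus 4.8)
The plan is to apply the fractional product rule (Lemma \ref{l19}) with $s=\frac{d^2-4d+1}{2(d-1)}$ and then convert the resulting mixed-norm bounds into the Strichartz norms $S_s$ and $N_s$ appearing in the statement, using H\"older in time together with the definition of wave-admissibility. First I would fix a convenient representative of the $N_{\frac{d-3}{2(d-1)}}$ norm: since $N_s$ is an infimum over $\dot H_x^s$-wave admissible dual pairs, it suffices to produce the bound with the Lebesgue exponents $L_t^{\tilde q'}L_x^{\tilde r'}$ corresponding to one well-chosen admissible pair $(\tilde q,\tilde r)$. A natural choice here is the exponent pair associated with the spacetime norm $L_{t,x}^{\frac{d+1}{2}}$-type scaling on the right-hand side; one checks that the relevant dual exponent in space is $\tilde r' $ with $\tfrac1{\tilde r'}$ matching $\tfrac1{p}$ from Lemma \ref{l19} when $p_1$ is taken to be the spatial exponent of the $S_{\frac{d+1}{2(d-1)}}$-component $L_t^{q_1}L_x^{r_1}$ and $p_2$ the exponent of the $L_{t,x}^{(d+1)/2}$ factor.

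The key steps, in order, are: (i) write $|\nabla|^s(fg)$ via Lemma \ref{l19} as a sum of two terms, $|\nabla|^s f \cdot g$ (in $L_x^{p_1}\cdot L_x^{p_2}$) and $f\cdot |\nabla|^s g$ (in $L_x^{p_3}\cdot L_x^{p_4}$); (ii) for the first term, take $p_1=r_1$ where $(q_1,r_1)$ is the $\dot H_x^{\frac{d+1}{2(d-1)}}$-wave admissible pair with $q_1=r_1=d+1$ (the natural "diagonal" Strichartz exponent in this setting), so that $|\nabla|^s f\in L_{t,x}^{d+1}$ is controlled by $\lVert |\nabla|^s f\rVert_{S_{\frac{d+1}{2(d-1)}}}$; then the companion space exponent forced by the Leibniz relation $\tfrac1{\tilde r'}=\tfrac1{r_1}+\tfrac1{p_2}$ must have $p_2=\tfrac{d+1}{2}$, and the matching time exponent is also $\tfrac{d+1}{2}$ by H\"older, $\tfrac1{\tilde q'}=\tfrac1{q_1}+\tfrac1{q_2}$ with $q_2=\tfrac{d+1}{2}$ — this produces the first term on the right, $\lVert |\nabla|^s f\rVert_{S_{\frac{d+1}{2(d-1)}}}\lVert g\rVert_{L_{t,x}^{(d+1)/2}}$; (iii) for the second term, $f\cdot |\nabla|^s g$, place $f$ in a space exponent $p_3$ dual (via Sobolev embedding $\dot H_x^{\frac{d+1}{2(d-1)}}\hookrightarrow L_x^{p_3}$ on the frequency-localized pieces, or directly via the $S$-norm at the appropriate admissible point) and $|\nabla|^s g$ in $L_t^{(d+1)/2}L_x^{p_4}$; solving $\tfrac1{\tilde r'}=\tfrac1{p_3}+\tfrac1{p_4}$ for $p_4$ with $\tfrac1{p_3}$ chosen so that $\lVert f\rVert_{L_t^{q_3}L_x^{p_3}}\lesssim \lVert |\nabla|^s f\rVert_{S_{\frac{d+1}{2(d-1)}}}$ (i.e. $(q_3,p_3)$ is $\dot H_x^{\frac{d+1}{2(d-1)}}$-wave admissible with $q_3$ compatible with the remaining time budget) yields $p_4 = \tfrac{2d(d^2-1)}{d^3+d^2-7d+1}$, which is exactly the exponent in the statement; (iv) verify that all intermediate exponents lie in the ranges $1<p_1,p_4<\infty$, $1<p,p_2,p_3\le\infty$ demanded by Lemma \ref{l19}, and that $(\tilde q,\tilde r)$ with $\tfrac1{\tilde q}=1-\tfrac1{\tilde q'}$, $\tfrac1{\tilde r}=1-\tfrac1{\tilde r'}$ is genuinely $\dot H_x^{\frac{d-3}{2(d-1)}}$-wave admissible, i.e. $q,r\ge 2$, $r<\infty$, $\tfrac1{\tilde q}+\tfrac{d-1}{2\tilde r}\le\tfrac{d-1}{4}$, and the scaling relation $\tfrac1{\tilde q}+\tfrac{d}{\tilde r}=\tfrac d2-\tfrac{d-3}{2(d-1)}$ holds — this last bookkeeping is where the hypothesis $d\ge 6$ is used to keep all exponents admissible and positive.

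The main obstacle I anticipate is step (iii)–(iv): the algebra of matching the space exponent $p_4$ to the peculiar value $\tfrac{2d(d^2-1)}{d^3+d^2-7d+1}$ while simultaneously keeping the time exponent at $\tfrac{d+1}{2}$ and ensuring the pair for $f$ is admissible is a tight system with essentially no slack, so one has to choose the admissible representative of the $S_{\frac{d+1}{2(d-1)}}$-norm for $f$ very carefully (and possibly differently in the two terms). The derivative count — checking that $s=\frac{d^2-4d+1}{2(d-1)}$ is consistent with the gain $\frac{d+1}{2(d-1)}$ of derivatives built into the $S$-norm indexing and the loss $\frac{d-3}{2(d-1)}$ in the $N$-norm, via the identity relating $\mu$, $s$, $\tilde s$ in the Strichartz inequality \eqref{l6} — is the other place where a sign or off-by-one error would be fatal, so I would double-check it against the scaling of the cubic term before doing anything else. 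Once the exponents are pinned down, the remaining estimates are an immediate combination of Lemma \ref{l19}, H\"older in time, Bernstein/Sobolev embedding, and the definitions of $S_s(I)$ and $N_s(I)$.
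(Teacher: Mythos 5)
Your overall strategy --- apply Lemma \ref{l19}, H\"older in the mixed norms, Sobolev embedding, and then recognize the result as the dual of a single well-chosen $\dot H_x^{\frac{d-3}{2(d-1)}}$-wave admissible pair --- is exactly the paper's, but the concrete exponent choice you commit to in step (ii) does not work, and since this lemma is essentially nothing but exponent bookkeeping, that is a genuine gap. Writing $\alpha=\frac{d^2-4d+1}{2(d-1)}$: the pair $(d+1,d+1)$ is \emph{not} $\dot H_x^{\frac{d+1}{2(d-1)}}$-wave admissible, since the scaling relation $\frac1q+\frac dr=\frac d2-s$ with $q=r=d+1$ forces $s=\frac{d-2}{2}=s_c$; consequently $\lVert |\nabla|^\alpha f\rVert_{L_{t,x}^{d+1}}$ is not controlled by $\lVert |\nabla|^\alpha f\rVert_{S_{\frac{d+1}{2(d-1)}}}$ (it scales like $\dot H_x^{s_c+\alpha}$ data for $f$, not $\dot H_x^{s_c}$, so the claimed right-hand side could not even be dimensionally correct). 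Worse, pairing it with $g\in L_{t,x}^{(d+1)/2}$ places the product in $L_{t,x}^{(d+1)/3}$, the dual of $L_{t,x}^{\frac{d+1}{d-2}}$; for $d\ge 6$ one has $\frac{d+1}{d-2}<2$, so this is not the dual of any wave admissible pair (and its scaling does not match $\frac d2-\frac{d-3}{2(d-1)}$ either), hence it does not dominate the $N_{\frac{d-3}{2(d-1)}}$ norm.

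The fix is the one you half-anticipate in your closing paragraph: the admissible representative of the $S_{\frac{d+1}{2(d-1)}}$-norm for $f$ must be the endpoint pair $(2,\frac{2(d-1)}{d-3})$, in \emph{both} terms. The paper bounds the $N_{\frac{d-3}{2(d-1)}}$ norm by $L_t^{\frac{2(d+1)}{d+5}}L_x^{\frac{2(d^2-1)}{d^2+2d-7}}$, the dual of the admissible pair $(\frac{2(d+1)}{d-3},\frac{2(d^2-1)}{d^2-2d+5})$, and then applies Lemma \ref{l19} with $|\nabla|^\alpha f\in L_t^2L_x^{\frac{2(d-1)}{d-3}}$ against $g\in L_{t,x}^{(d+1)/2}$ (the time H\"older being $\frac12+\frac2{d+1}=\frac{d+5}{2(d+1)}$), and with $f\in L_t^2L_x^{2d}$ against $|\nabla|^\alpha g\in L_t^{(d+1)/2}L_x^{\frac{2d(d^2-1)}{d^3+d^2-7d+1}}$, finally converting $\lVert f\rVert_{L_t^2L_x^{2d}}\lesssim\lVert|\nabla|^\alpha f\rVert_{L_t^2L_x^{\frac{2(d-1)}{d-3}}}$ by Sobolev embedding. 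Your step (iii) is broadly consistent with this second piece, but without replacing the diagonal choice in step (ii) the first term on the right-hand side of the lemma is not obtained.
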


\begin{proof}
We begin by noting that $(\frac{2(d+1)}{d-3},\frac{2(d^2-1)}{d^2-2d+5})$ is an $\dot{H}_x^{\frac{d-3}{2(d-1)}}$ wave admissible pair. Applying Lemma $\ref{l19}$ followed by Sobolev's inequality, we obtain,
\begin{align}
&\nonumber \lVert |\nabla|^\frac{d^2-4d+1}{2(d-1)} (fg)\rVert_{N_{\frac{d-3}{2(d-1)}}}\lesssim \lVert |\nabla|^\frac{d^2-4d+1}{2(d-1)} (fg)\rVert_{L_t^\frac{2(d+1)}{d+5}L_x^\frac{2(d^2-1)}{d^2+2d-7}}\\
&\nonumber \hspace{0.7in}\lesssim \lVert |\nabla|^{\frac{d^2-4d+1}{2(d-1)}} f\rVert_{L_t^2L_x^\frac{2(d-1)}{d-3}}\lVert g\rVert_{L_{t,x}^\frac{d+1}{2}}\\
&\nonumber \hspace{0.9in}+\lVert f\rVert_{L_t^2L_x^{2d}}\lVert |\nabla|^\frac{d^2-4d+1}{2(d-1)}g\rVert_{L_t^\frac{d+1}{2}L_x^\frac{2d(d^2-1)}{d^3+d^2-7d+1}}\\
&\nonumber \hspace{0.7in}\lesssim \lVert |\nabla|^{\frac{d^2-4d+1}{2(d-1)}} f\rVert_{L_t^2L_x^\frac{2(d-1)}{d-3}}\lVert g\rVert_{L_{t,x}^\frac{d+1}{2}} \\
&\nonumber \hspace{0.9in}+\lVert |\nabla|^{\frac{d^2-4d+1}{2(d-1)}}f\rVert_{L_t^2L_x^\frac{2(d-1)}{d-3}}\lVert |\nabla|^\frac{d^2-4d+1}{2(d-1)}g\rVert_{L_t^\frac{d+1}{2}L_x^\frac{2d(d^2-1)}{d^3+d^2-7d+1}}.
\end{align}

We conclude the proof by noting that $(2,\frac{2(d-1)}{d-3})$ is an $\dot{H}_x^\frac{d+1}{2(d-1)}$ admissible pair, which gives the right hand side of the desired inequality.
\end{proof}

We will also need the following estimate, which is a variant of the fractional chain rule for the cubic nonlinearity.
\begin{lemma} Let $d\geq 6$ be given.  Then we have,
\begin{align*}
\nonumber \lVert |\nabla|^{\frac{d^2-4d+1}{2(d-1)}}(|f|^2f)\rVert_{N_{\frac{d-3}{2(d-1)}}}&\lesssim \lVert |\nabla|^\frac{d^2-4d+1}{2(d-1)} f\rVert_{L_t^2L_x^\frac{2(d-1)}{d-3}}\lVert f\rVert^2_{L_{t,x}^{d+1}}\\
&\lesssim \lVert |\nabla|^\frac{d^2-4d+1}{2(d-1)} f\rVert_{S_{\frac{d+1}{2(d-1)}}}\lVert f\rVert^2_{L_{t,x}^{d+1}}.
\end{align*}
\label{l21}
\end{lemma}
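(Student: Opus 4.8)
The plan is to deduce Lemma~\ref{l21} directly from Lemma~\ref{l20} by specializing $g=f$ and absorbing one derivative factor via Sobolev embedding. First I would apply Lemma~\ref{l20} with $g$ replaced by $f$, obtaining
\begin{align*}
\lVert |\nabla|^{\frac{d^2-4d+1}{2(d-1)}}(|f|^2f)\rVert_{N_{\frac{d-3}{2(d-1)}}}&\lesssim \lVert |\nabla|^{\frac{d^2-4d+1}{2(d-1)}} f\rVert_{S_{\frac{d+1}{2(d-1)}}}\lVert f^2\rVert_{L_{t,x}^{\frac{d+1}{2}}}\\
&\hspace{0.2in}+\lVert |\nabla|^{\frac{d^2-4d+1}{2(d-1)}} f\rVert_{S_{\frac{d+1}{2(d-1)}}}\lVert |\nabla|^{\frac{d^2-4d+1}{2(d-1)}}(f^2)\rVert_{L_t^{\frac{d+1}{2}}L_x^{\frac{2d(d^2-1)}{d^3+d^2-7d+1}}},
\end{align*}
so the task reduces to showing that both $\lVert f^2\rVert_{L_{t,x}^{(d+1)/2}}$ and $\lVert |\nabla|^{(d^2-4d+1)/(2(d-1))}(f^2)\rVert_{L_t^{(d+1)/2}L_x^{2d(d^2-1)/(d^3+d^2-7d+1)}}$ are controlled by $\lVert f\rVert_{L_{t,x}^{d+1}}\lVert |\nabla|^{(d^2-4d+1)/(2(d-1))}f\rVert_{L_t^2 L_x^{2(d-1)/(d-3)}}$ up to constants (and then invoke that $(2,2(d-1)/(d-3))$ is $\dot H_x^{(d+1)/(2(d-1))}$-wave admissible, exactly as at the end of the proof of Lemma~\ref{l20}, to pass from the $L_t^2 L_x^{2(d-1)/(d-3)}$ norm to the $S_{(d+1)/(2(d-1))}$ norm).

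For the first term, Hölder in space and time gives $\lVert f^2\rVert_{L_{t,x}^{(d+1)/2}}=\lVert f\rVert_{L_{t,x}^{d+1}}^2$, and one factor of $\lVert f\rVert_{L_{t,x}^{d+1}}$ is then traded for $\lVert |\nabla|^{(d^2-4d+1)/(2(d-1))}f\rVert_{L_t^2 L_x^{2(d-1)/(d-3)}}$ via Sobolev embedding together with Hölder in time; one checks that $(2,2(d-1)/(d-3))$ with $(d^2-4d+1)/(2(d-1))$ derivatives Sobolev-embeds into the relevant $L_t^{q}L_x^{d+1}$ with $q\ge d+1$, so Hölder in time over the (fixed, by localization or by the structure of the product estimate) interval closes this. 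For the second term, I would apply Lemma~\ref{l19} to $|\nabla|^{s}(f\cdot f)$ with $s=(d^2-4d+1)/(2(d-1))$, distributing the derivative onto one factor: $\lVert |\nabla|^{s}(f^2)\rVert_{L_t^{(d+1)/2}L_x^{r}}\lesssim \lVert |\nabla|^{s}f\rVert_{L_t^2 L_x^{2(d-1)/(d-3)}}\lVert f\rVert_{L_t^{q}L_x^{\rho}}$ for the appropriate Hölder-conjugate exponents $q,\rho$, and then again use Sobolev embedding to bound $\lVert f\rVert_{L_t^q L_x^\rho}$ by $\lVert f\rVert_{L_{t,x}^{d+1}}$ (matching $\rho$ against $d+1$ via the scaling relation $\frac{d-3}{2(d-1)}+\frac{1}{\rho}$-type bookkeeping).

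The main obstacle I anticipate is purely the exponent bookkeeping: verifying that the spatial exponent $2d(d^2-1)/(d^3+d^2-7d+1)$ appearing on the right of Lemma~\ref{l20}, combined with $s=(d^2-4d+1)/(2(d-1))$ derivatives, satisfies the Sobolev relation that lets $|\nabla|^s(f^2)$ be written as a product with one factor in $L_t^2 L_x^{2(d-1)/(d-3)}$ and the other factor Sobolev-embeddable into $L^{d+1}_{t,x}$ — i.e. checking the two algebraic identities $\frac{1}{r}=\frac{d-3}{2(d-1)}+\frac{1}{\rho}$ and $\frac{1}{\rho}-\frac{s}{d}=\frac{1}{d+1}$ (equivalently that the leftover Lebesgue/derivative budget is nonnegative, forcing $d\ge 6$, consistent with the hypothesis) and that the time exponents add up to $\frac{2}{d+1}$ with the time interval of unit-ish length so Hölder in $t$ costs only a constant. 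None of this is conceptually deep, but it must be done carefully; once the exponents check out, the two displayed inequalities in the statement follow immediately, the first from the estimate just proved and the second from the Strichartz bound $\lVert |\nabla|^{(d^2-4d+1)/(2(d-1))}f\rVert_{L_t^2 L_x^{2(d-1)/(d-3)}}\le \lVert |\nabla|^{(d^2-4d+1)/(2(d-1))}f\rVert_{S_{(d+1)/(2(d-1))}}$ since $(2,2(d-1)/(d-3))$ is $\dot H_x^{(d+1)/(2(d-1))}$-wave admissible.
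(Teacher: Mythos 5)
Your reduction of Lemma \ref{l21} to Lemma \ref{l20} with $g=f^2$ has a genuine gap, and it is precisely in the exponent bookkeeping you defer. The second term produced by Lemma \ref{l20} is $\lVert |\nabla|^{s} f\rVert_{S_{\frac{d+1}{2(d-1)}}}\lVert |\nabla|^{s}(f^2)\rVert_{L_t^{(d+1)/2}L_x^{r}}$ with $s=\frac{d^2-4d+1}{2(d-1)}$ and $r=\frac{2d(d^2-1)}{d^3+d^2-7d+1}$, and your proposed split $\lVert |\nabla|^{s}(f^2)\rVert_{L_t^{(d+1)/2}L_x^{r}}\lesssim \lVert |\nabla|^{s}f\rVert_{L_t^2L_x^{2(d-1)/(d-3)}}\lVert f\rVert_{L_t^qL_x^\rho}$ is impossible at the level of the time exponents: H\"older requires $\frac{2}{d+1}=\frac12+\frac1q$, which has no solution $q>0$ once $d\geq 4$ (and there is no "localization" available — these estimates are on arbitrary, possibly infinite, time intervals, so H\"older in $t$ must match exactly). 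Any valid split of $\frac{2}{d+1}=\frac{1}{d+1}+\frac{1}{d+1}$ forces $|\nabla|^s f$ into an $L_t^{d+1}$-based admissible norm, and the second term then becomes $\lVert |\nabla|^s f\rVert_{S}^2\,\lVert f\rVert_{L_{t,x}^{d+1}}$ rather than $\lVert |\nabla|^s f\rVert_{S}\,\lVert f\rVert_{L_{t,x}^{d+1}}^2$. This is not a cosmetic difference: the whole point of Lemma \ref{l21} is that the nonlinearity carries \emph{two} powers of the small quantity $\lVert f\rVert_{L_{t,x}^{d+1}}$ and only one Strichartz factor, which is what closes the contraction in Theorem \ref{l22}; a bound with two Strichartz factors and one power of $\lVert f\rVert_{L_{t,x}^{d+1}}$ would not. (A secondary issue: Lemma \ref{l20}'s conclusion is stated in the $S$-norm, which dominates the $L_t^2L_x^{2(d-1)/(d-3)}$ norm, so your route also cannot recover the first, sharper line of the displayed chain.)

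The paper avoids this by not routing through Lemma \ref{l20} at all. It applies Lemma \ref{l19} directly to $f\cdot f^2$ with a \emph{different} allocation of exponents: in the term where the derivative falls on $f^2$, the companion factor is measured as $\lVert f\rVert_{L_{t,x}^{d+1}}$ with no derivatives, which leaves enough room to place $|\nabla|^{s}(f^2)$ in $L_t^{\frac{2(d+1)}{d+3}}L_x^{\frac{2(d^2-1)}{d^2-5}}$; there the time exponent satisfies $\frac{d+3}{2(d+1)}=\frac12+\frac{1}{d+1}$, so a second application of Lemma \ref{l19} legitimately yields $\lVert |\nabla|^{s}f\rVert_{L_t^2L_x^{2(d-1)/(d-3)}}\lVert f\rVert_{L_{t,x}^{d+1}}$, giving the correct structure. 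Your treatment of the first term is also superfluous (and the "trade via Sobolev plus H\"older in time" there is invalid for the same infinite-interval reason), though since $\lVert f^2\rVert_{L_{t,x}^{(d+1)/2}}=\lVert f\rVert_{L_{t,x}^{d+1}}^2$ already matches the target, that part causes no harm.
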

\begin{proof}
We note that, proceeding as in the proof of Lemma $\ref{l20}$,
\begin{align}
\nonumber &\lVert |\nabla|^\frac{d^2-4d+1}{2(d-1)} (|f|^2f)\rVert_{N_{\frac{d-3}{2(d-1)}}}\lesssim \lVert |\nabla|^\frac{d^2-4d+1}{2(d-1)} (|f|^2f)\rVert_{L_t^\frac{2(d+1)}{d+5}L_x^\frac{2(d^2-1)}{d^2+2d-7}}\\
\nonumber &\hspace{0.2in}\lesssim \lVert |\nabla|^{\frac{d^2-4d+1}{2(d-1)}} f\rVert_{L_t^2L_x^\frac{2(d-1)}{d-3}}\lVert f^2\rVert_{L_{t,x}^\frac{d+1}{2}} \\
\nonumber &\hspace{0.4in}+\lVert f\rVert_{L_{t,x}^{d+1}}\lVert |\nabla|^{\frac{d^2-4d+1}{2(d-1)}} (f^2)\rVert_{L_t^\frac{2(d+1)}{d+3}L_x^\frac{2(d^2-1)}{d^2-5}}\\
\nonumber &\hspace{0.2in}\lesssim \lVert |\nabla|^{\frac{d^2-4d+1}{2(d-1)}} f\rVert_{L_t^2L_x^\frac{2(d-1)}{d-3}}\lVert f\rVert_{L_{t,x}^{d+1}}^2\\
\nonumber &\hspace{0.4in}+\lVert f\rVert_{L_{t,x}^{d+1}} \lVert |\nabla|^{\frac{d^2-4d+1}{2(d-1)}}f\rVert_{L_t^2L_x^\frac{2(d-1)}{d-3}}\lVert f\rVert_{L_{t,x}^{d+1}}\\
&\hspace{0.2in}\lesssim \lVert |\nabla|^\frac{d^2-4d+1}{2(d-1)} f\rVert_{S_{\frac{d+1}{2(d-1)}}}\lVert f\rVert_{L_{t,x}^{d+1}}^2,
\end{align}
where in the third inequality we use Lemma $\ref{l19}$ and we note that $(2,\frac{2(d-1)}{d-3})$ is an $\dot{H}_x^{\frac{d+1}{2(d-1)}}$-wave admissible pair to obtain the desired estimate.
\end{proof}

\subsection{Local well-posedness}

We now give a standard local well-posedness theorem for (NLW) with our cubic nonlinearity $F(u)=|u|^2u$.  The version that we present here is in the spirit of the related results in the works of \cite{KenigMerleNLW,KillipVisanSupercriticalNLS}.  For similar results see also \cite{BulutCzubakLiPavlovicZhang,GinibreSoferVelo,KenigMerleNLS,Pecher,ShatahStruwe2,TaoVisan}.
\begin{theorem}
\label{l22}
Let $d\geq 6$ and $s_c=\frac{d-2}{2}$.  Then for all $A>0$, there exists $\delta_0=\delta_0(d,A)>0$ such that for every $0<\delta\leq \delta_0$, $0\in I\subset\mathbb{R}$, and $(u_0,u_1)\in \dot{H}_x^{s_c}\times \dot{H}_x^{s_c-1}(\mathbb{R}^d)$ with 
\begin{align}
\lVert (u_0,u_1)\rVert_{\dot{H}_x^{s_c}\times\dot{H}_x^{s_c-1}}\leq A,
\label{l23}
\end{align}
the condition
\begin{align*}
\lVert \mathcal{W}(t)(u_0,u_1)\rVert_{L_{t,x}^{d+1}(I\times\mathbb{R}^d)}\leq \delta,
\end{align*}
implies that there exists a unique solution $u$ to (NLW) on $I\times\mathbb{R}^d$ with
\begin{align*}
\lVert u\rVert_{L_{t,x}^{d+1}}\leq 2\delta,
\end{align*}
and
\begin{align*}
\lVert |\nabla|^{\frac{d^2-4d+1}{2(d-1)}}u\rVert_{S_{\frac{d+1}{2(d-1)}}(I)}+\lVert |\nabla|^{\frac{d^2-4d+1}{2(d-1)}-1}u_t\rVert_{S_{\frac{d+1}{2(d-1)}}(I)}<\infty.
\end{align*}
\end{theorem}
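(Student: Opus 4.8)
The plan is to prove local well-posedness via a standard contraction mapping argument applied to the Duhamel operator
\[
\Phi(u)(t) = \mathcal{W}(t)(u_0,u_1) + \int_0^t \frac{\sin((t-t')|\nabla|)}{|\nabla|} F(u(t'))\,dt',
\]
on the complete metric space
\[
B = \left\{ u : \lVert u\rVert_{L_{t,x}^{d+1}(I\times\mathbb{R}^d)} \leq 2\delta,\ \lVert |\nabla|^{\frac{d^2-4d+1}{2(d-1)}}u\rVert_{S_{\frac{d+1}{2(d-1)}}(I)} \leq 2CA \right\}
\]
equipped with the metric $\mathrm{dist}(u,v) = \lVert u-v\rVert_{L_{t,x}^{d+1}(I\times\mathbb{R}^d)}$, where $C$ is the implicit Strichartz constant. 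First I would verify that $\mathcal{W}(t)(u_0,u_1)$ itself lies in $B$: the $L_{t,x}^{d+1}$ bound is exactly the hypothesis $\lVert \mathcal{W}(t)(u_0,u_1)\rVert_{L_{t,x}^{d+1}}\leq\delta$, while the Strichartz estimate \eqref{l6} applied with $F=0$ together with the admissibility computations already recorded (the pair $(2,\tfrac{2(d-1)}{d-3})$ being $\dot{H}_x^{\frac{d+1}{2(d-1)}}$-wave admissible) gives $\lVert |\nabla|^{\frac{d^2-4d+1}{2(d-1)}}\mathcal{W}(t)(u_0,u_1)\rVert_{S_{\frac{d+1}{2(d-1)}}}\lesssim A$, using that $s_c-1 = \frac{d^2-4d+1}{2(d-1)} - \frac{d+1}{2(d-1)}$ so the derivative count matches the critical regularity of the data.

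The core of the argument is the nonlinear estimate. Applying the Strichartz inequality to the Duhamel term and then Lemma~\ref{l21}, I get
\[
\lVert |\nabla|^{\frac{d^2-4d+1}{2(d-1)}}\Phi(u)\rVert_{S_{\frac{d+1}{2(d-1)}}} \lesssim A + \lVert |\nabla|^{\frac{d^2-4d+1}{2(d-1)}}(|u|^2u)\rVert_{N_{\frac{d-3}{2(d-1)}}} \lesssim A + \lVert |\nabla|^{\frac{d^2-4d+1}{2(d-1)}}u\rVert_{S_{\frac{d+1}{2(d-1)}}}\lVert u\rVert_{L_{t,x}^{d+1}}^2 \lesssim A + CA\,\delta^2,
\]
which, taking $\delta_0$ small enough that $C\delta_0^2$ is controlled, stays below $2CA$. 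For the $L_{t,x}^{d+1}$ component I use that $(d+1,d+1)$ corresponds (via Sobolev embedding from the endpoint Strichartz space, exactly as in the proof of Lemma~\ref{l20}) to a norm dominated by $\lVert |\nabla|^{\frac{d^2-4d+1}{2(d-1)}}u\rVert_{S_{\frac{d+1}{2(d-1)}}}$; so the Duhamel contribution to $\lVert \Phi(u)\rVert_{L_{t,x}^{d+1}}$ is $\lesssim CA\,\delta^2 \leq \delta$ for $\delta_0$ small, and adding the linear piece bounded by $\delta$ gives the total $\leq 2\delta$. Thus $\Phi$ maps $B$ into itself.

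For the contraction property, I would use the algebraic identity $|u|^2u - |v|^2v = O(|u-v|(|u|^2+|v|^2))$ together with the same Strichartz-plus-product-rule scheme: estimating $\lVert |\nabla|^{\frac{d^2-4d+1}{2(d-1)}}(\Phi(u)-\Phi(v))\rVert_{S} + \lVert\Phi(u)-\Phi(v)\rVert_{L_{t,x}^{d+1}}$ by Lemma~\ref{l19} distributing derivatives over the difference and the quadratic factor, I obtain a bound $\lesssim (CA)\delta\,\lVert u-v\rVert_{L_{t,x}^{d+1}}$ (roughly, two of the three factors absorbed in Strichartz norms bounded by $2CA$ and $2\delta$, one factor being the difference in $L_{t,x}^{d+1}$), which is $\leq \tfrac12\lVert u-v\rVert_{L_{t,x}^{d+1}}$ for $\delta_0$ small. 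Banach fixed point then yields the unique $u\in B$, which is the desired solution with $\lVert u\rVert_{L_{t,x}^{d+1}}\leq 2\delta$ and finite Strichartz norm; the remaining persistence properties (continuity of $(u,u_t)$ into $\dot{H}_x^{s_c}\times\dot{H}_x^{s_c-1}$ on compact subintervals, and the $u_t$ Strichartz bound) follow by applying \eqref{l6} once more to the fixed point. The main obstacle I anticipate is purely bookkeeping: confirming that all the fractional-derivative exponents and Lebesgue exponents in Lemmas~\ref{l20}--\ref{l21} line up with wave-admissibility of the relevant pairs for $d\geq 6$, and in particular that the term $\lVert |\nabla|^{\frac{d^2-4d+1}{2(d-1)}}g\rVert_{L_t^{\frac{d+1}{2}}L_x^{\cdots}}$ appearing in Lemma~\ref{l20} is, in the reduction to the pure cubic nonlinearity of Lemma~\ref{l21}, genuinely absorbed into $\lVert f\rVert_{L_{t,x}^{d+1}}$ factors via Bernstein and Sobolev rather than requiring a separate a priori control — this is the one place where the special structure $F(u)=|u|^2u$ is doing real work.
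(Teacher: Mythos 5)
Your proposal follows essentially the same route as the paper: a contraction mapping for the Duhamel operator on the ball cut out by the $L_{t,x}^{d+1}$ norm and the $S_{\frac{d+1}{2(d-1)}}$ norm of $|\nabla|^{\alpha}u$, $\alpha=\frac{d^2-4d+1}{2(d-1)}$, with the self-mapping handled by Lemma~\ref{l21} and the difference by Lemma~\ref{l20} applied to $|u|^2u-|v|^2v=(u-v)(u^2+uv+v^2)$; the Sobolev-embedding identification of $L_{t,x}^{d+1}$ with an $|\nabla|^{\alpha}$-Strichartz norm that you invoke is exactly what the paper uses, and the exponent bookkeeping checks out. The one point that does not survive as written is the contraction metric: taking $\mathrm{dist}(u,v)=\lVert u-v\rVert_{L_{t,x}^{d+1}}$ alone cannot close, because the fractional product rule (Lemma~\ref{l19}) necessarily places $|\nabla|^{\alpha}$ on the difference factor in one of its two terms, so the difference estimate coming out of Lemma~\ref{l20} is of the form $\lVert |\nabla|^{\alpha}(u-v)\rVert_{S_{\frac{d+1}{2(d-1)}}}\cdot(\cdots)$ and is not controlled by $\lVert u-v\rVert_{L_{t,x}^{d+1}}$; your claimed bound $\lesssim CA\,\delta\,\lVert u-v\rVert_{L_{t,x}^{d+1}}$ therefore overstates what the lemmas give. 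The fix is what the paper does: run the contraction in the full norm $\lVert u-v\rVert_{L_{t,x}^{d+1}}+\lVert |\nabla|^{\alpha}(u-v)\rVert_{S_{\frac{d+1}{2(d-1)}}}$, for which the estimate produces a contraction factor $\lesssim a^2+ab$ with $a\sim\delta$ and $b\sim A$; this is $<1$ after shrinking $\delta_0$, which is legitimate only because $\delta_0$ is allowed to depend on $A$. With that adjustment your argument coincides with the paper's proof.
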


\begin{proof}
We use a contraction mapping argument.  Fix $\alpha=\frac{d^2-4d+1}{2(d-1)}$ and note that by the Duhamel representation for the solution to (NLW), we have
\begin{align*}
u(t)=\mathcal{W}(t)(u_0,u_1)+\int_0^t \frac{\sin((t-s)|\nabla|)}{|\nabla|}(|u(s)|^2u(s))ds.
\end{align*}
For all $a,b>0$, we define the contraction space
\begin{align*}
B_{a,b}&:=\{v:\lVert v\rVert_{L_{t,x}^{d+1}}\leq a, \\
&\hspace{1.0in}\lVert |\nabla|^\alpha v\rVert_{S_{\frac{d+1}{2(d-1)}}}+\lVert |\nabla|^{\alpha-1}v_t\rVert_{S_{\frac{d+1}{2(d-1)}}}\leq b\},
\end{align*}
and the map
\begin{align*}
\Phi(v)(t):=\mathcal{W}(t)(u_0,u_1)+\int_0^t \frac{\sin((t-s)|\nabla|)}{|\nabla|} (|v(s)|^2v(s))ds.
\end{align*}

We would like to show that for suitably chosen $a$ and $b$, we have the inclusion $\Phi(B_{a,b})\subset B_{a,b}$ and the mapping $\Phi:B_{a,b}\rightarrow B_{a,b}$ is a contraction.

We first note that using Minkowski's inequality followed by the assumption ($\ref{l23}$) and the Strichartz inequality, we obtain for $v\in B_{a,b}$,
\begin{align}
&\nonumber \lVert |\nabla|^\alpha \Phi(v)\rVert_{S_{s_c-\alpha}}+\lVert |\nabla|^{\alpha-1}\partial_t \Phi(v)\rVert_{S_{s_c-\alpha}}\\
&\nonumber \hspace{0.2in}\leq \lVert |\nabla|^\alpha \mathcal{W}(t)(u_0,u_1)\rVert_{S_{s_c-\alpha}}+\bigg\lVert |\nabla|^\alpha \int_0^t \frac{\sin((t-s)|\nabla|)}{|\nabla|} (|v(s)|^2v(s))ds\bigg\rVert_{S_{s_c-\alpha}}\\
&\nonumber \hspace{0.4in}+\lVert |\nabla|^\alpha \partial_t \mathcal{W}(t)(u_0,,u_1)\rVert_{S_{s_c-\alpha}}+\bigg\lVert |\nabla|^{\alpha-1}\int_0^t \cos((t-s)|\nabla|)(|v(s)|^2v(s))ds\bigg\rVert_{S_{s_c-\alpha}}\\
&\nonumber \hspace{0.2in}\lesssim \lVert (u_0,u_1)\rVert_{\dot{H}_x^{s_c}\times\dot{H}_x^{s_c-1}}+\lVert |\nabla|^\alpha (|v|^2v)\rVert_{N_{1+\alpha-s_c}}\\
&\label{l24}\hspace{0.2in}\leq CA+C'\lVert |\nabla|^\alpha v\rVert_{S_{s_c-\alpha}}\lVert v\rVert_{L_{t,x}^{d+1}}^2\\
&\nonumber\hspace{0.2in}\leq CA+Ca^2b,
\end{align}
where we used Lemma \ref{l21} to obtain ($\ref{l24}$).

\noindent Similarly, using Minkowski's inequality together with the assumption ($\ref{l23}$), we estimate
\begin{align*}
\lVert \Phi(v)\rVert_{L_{t,x}^{d+1}}&\leq \lVert \mathcal{W}(t)(u_0,u_1)\rVert_{L_{t,x}^{d+1}}+\bigg\lVert \int_0^t \frac{\sin((t-s)|\nabla|)}{|\nabla|}(|u(s)|^2u(s))ds\bigg\rVert_{L_{t,x}^{d+1}}\\
&\leq \delta+C\lVert |\nabla|^\alpha (|u|^2u)\rVert_{N_{1+\alpha-s_c}}\\
&\leq \delta+C\lVert |\nabla|^\alpha u\rVert_{S_{s_c-\alpha}}\lVert u\rVert_{L_{t,x}^{d+1}}^2\\
&\leq \delta+Ca^2b.
\end{align*}

\noindent Choosing $b=2AC$ and $a$ such that $Ca^2\leq \frac{1}{2}$, we obtain
\begin{align}
\lVert |\nabla|^\alpha \Phi(v)\rVert_{S_{s_c-\alpha}}\leq b.\label{l25}
\end{align}
If we also fix $\delta=\frac{a}{2}$ and $a$ small enough such that $Ca^2b\leq \frac{a}{2}$, we have
\begin{align}
\lVert \Phi(v)\rVert_{L_{t,x}^{d+1}}\leq a.\label{l26}
\end{align}
Combining ($\ref{l25}$) and ($\ref{l26}$) with the above choices of $a,b$ and $\delta$, we have the desired inclusion $\Phi(B_{a,b})\subset B_{a,b}$.

We now show that the mapping $\Phi$ is a contraction for suitable $a,b$ and $\delta$.  Let $a,b$ and $\delta$ be as chosen above.  Note that by the Strichartz inequality and Lemma $\ref{l20}$ along with Minkowski's inequality we have, 
\begin{align}
\nonumber &\lVert |\nabla|^\alpha [\Phi(u)-\Phi(v)]\rVert_{S_{s_c-\alpha}}+\lVert |\nabla|^{\alpha-1}\partial_t [\Phi(u)-\Phi(v)]\rVert_{S_{s_c-\alpha}}+\lVert \Phi(u)-\Phi(v)\rVert_{L_{t,x}^{d+1}}\\
\nonumber &\hspace{0.2in}\lesssim \lVert |\nabla|^\alpha [(|v|^2v)-(|u|^2u)]\rVert_{N_{1+\alpha-s_c}}\\
\nonumber &\hspace{0.2in}=\lVert |\nabla|^\alpha [(v-u)\{v^2+uv+u^2\}]\rVert_{N_{\frac{d-3}{2(d-1)}}}\\
\nonumber &\hspace{0.2in}\leq \lVert |\nabla|^\alpha (v-u)\rVert_{S_{\frac{d+1}{2(d-1)}}}\\
\nonumber &\hspace{0.5in}\big[\lVert \{v^2+uv+u^2\}\rVert_{L_{t,x}^\frac{d+1}{2}}+\lVert |\nabla|^\alpha \{v^2+uv+u^2\}\rVert_{L_t^\frac{d+1}{2}L_x^\frac{2d(d^2-1)}{d^3+d^2-7d+1}}\big]\\
\nonumber &\hspace{0.2in}\leq \lVert |\nabla|^\alpha (v-u)\rVert_{S_{\frac{d+1}{2(d-1)}}}\\
\nonumber &\hspace{0.5in}\big[\lVert v^2\rVert_{L_{t,x}^\frac{d+1}{2}}+\lVert uv\rVert_{L_{t,x}^\frac{d+1}{2}}+\lVert u^2\rVert_{L_{t,x}^\frac{d+1}{2}}+\lVert |\nabla|^\alpha (v^2)\rVert_{L_t^\frac{d+1}{2}L_x^\frac{2d(d^2-1)}{d^3+d^2-7d+1}}\\
\nonumber &\hspace{0.5in}+\lVert |\nabla|^\alpha (uv)\rVert_{L_t^\frac{d+1}{2}L_x^\frac{2d(d^2-1)}{d^3+d^2-7d+1}}+\lVert |\nabla|^\alpha (u^2)\rVert_{L_t^\frac{d+1}{2}L_x^\frac{2d(d^2-1)}{d^3+d^2-7d+1}}\big]\\
\label{l27}&\hspace{0.2in}\lesssim \lVert v-u\rVert_{B_{a,b}}\\
\nonumber &\hspace{0.5in}\big[\lVert v\rVert_{L_{t,x}^{d+1}}^2+\lVert u\rVert_{L_{t,x}^{d+1}}\lVert v\rVert_{L_{t,x}^{d+1}}+\lVert u\rVert_{L_{t,x}^{d+1}}^2\\
\nonumber &\hspace{0.5in}+\lVert |\nabla|^\alpha v\rVert_{L_t^{d+1}L_x^{\frac{2d(d^2-1)}{d^3-d^2-5d+1}}}\lVert v\rVert_{L_{t,x}^{d+1}}+\lVert |\nabla|^\alpha u\rVert_{L_t^{d+1}L_x^{\frac{2d(d^2-1)}{d^3-d^2-5d+1}}}\lVert v\rVert_{L_{t,x}^{d+1}}\\
\nonumber &\hspace{0.5in}+\lVert |\nabla|^\alpha v\rVert_{L_{t,x}^{d+1}}\lVert v\rVert_{L_{t,x}^{d+1}}+\lVert |\nabla|^\alpha u\rVert_{L_t^{d+1}L_x^{\frac{2d(d^2-1)}{d^3-d^2-5d+1}}}\lVert u\rVert_{L_{t,x}^{d+1}}\big]\\
\nonumber &\hspace{0.2in}\lesssim \lVert u-v\rVert_{B_{a,b}}(a^2+ab),
\end{align}
where we use H\"older's inequality and Lemma $\ref{l19}$ to obtain ($\ref{l27}$).  Thus, if $a$ is chosen such that $C(a^2+ab)<1$ we conclude that $\Phi$ is a contraction as desired.
\end{proof}

\begin{remark}
Note that if $u^{(1)}$ and $u^{(2)}$ are two solutions to (NLW) as stated in Section $1$ with maximal interval of existence $I$ such that $(u^{(1)}(0),u^{(1)}_t(0))=(u^{(2)}(0),u^{(2)}_t(0))$, then
\begin{align*}
u^{(1)}(t)=u^{(2)}(t)\quad\textrm{for all}\quad t\in I.
\end{align*}
This result follows from standard arguments; see for instance \cite[\S IV.3]{Sogge}.
\end{remark}

\subsection{Stability}

In this section, we prove a stability result for (NLW).  As in the local well-posedness theorem, the argument that we present follows a standard approach and makes use of the cubic nature of the nonlinearity $F(u)=|u|^2u$.  In particular, the argument that we present here is in the spirit of the related works \cite{KenigMerleSupercritical,KillipVisanSupercriticalNLS}.  For similar treatments, see also \cite{BulutCzubakLiPavlovicZhang,KenigLectureNotes2,KillipVisanSupercriticalNLWradial,TaoVisan}.
\begin{theorem}
Let $d\geq 6$ and $s_c=\frac{d-2}{2}$.  Assume $0\in I\subset\mathbb{R}$ is a compact time interval and $\tilde{u}:I\times\mathbb{R}^d\rightarrow\mathbb{R}$ is a solution of the equation
\begin{align*}
\tilde{u}_{tt}-\Delta\tilde{u}+|\tilde{u}|^2\tilde{u}=e,
\end{align*}
for some $e$.  

Then for every $E,L>0$, there exists $\epsilon_1=\epsilon_1(E,L)>0$ such that for each $0<\epsilon<\epsilon_1$, the conditions
\begin{align*}
\sup_{t\in I} \lVert (\tilde{u}(t),\tilde{u}_t(t))\rVert_{\dot{H}_x^{s_c}\times\dot{H}_x^{s_c-1}(\mathbb{R}^d)}&\leq E,
\end{align*}\begin{align*}
\lVert (u_0-\tilde{u}(0),u_1-\tilde{u}_t(0))\rVert_{\dot{H}_x^{s_c}\times\dot{H}_x^{s_c-1}(\mathbb{R}^d)}&\leq \epsilon,
\end{align*}
\begin{align*}
\lVert |\nabla|^\frac{d^2-4d+1}{2(d-1)}e\rVert_{N_{\frac{d-2}{2(d-1)}}(I)}&\leq\epsilon, \quad\textrm{and}
\end{align*}\begin{align*}
\lVert \tilde{u}\rVert_{L_{t,x}^{d+1}}&\leq L
\end{align*}
imply that there exists a unique solution $u:I\times\mathbb{R}^d\rightarrow\mathbb{R}$ to (NLW) with initial data $(u_0,u_1)$ such that
\begin{align}
\lVert \tilde{u}-u\rVert_{L_{t,x}^{d+1}}&\leq C(E,L)\epsilon,\label{l28}
\end{align}\begin{align}
\lVert |\nabla|^{\frac{d^2-4d+1}{2(d-1)}}(u-\tilde{u})\rVert_{S_{\frac{d+1}{2(d-1)}}(I)}&\leq C(E,L)\epsilon,\label{l29}
\end{align}\begin{align}
\lVert |\nabla|^{\frac{d^2-4d+1}{2(d-1)}}u\rVert_{S_{\frac{d+1}{2(d-1)}}(I)}&\leq C(E,L).\label{l30}
\end{align}
\end{theorem}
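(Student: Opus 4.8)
The plan is to follow the standard Kenig--Merle/Killip--Visan scheme for a stability (long-time perturbation) result, bootstrapping the local well-posedness estimates of Theorem \ref{l22}. First I would reduce to the case of \emph{small} $L_{t,x}^{d+1}$ norm of $\tilde u$: since $\lVert \tilde u\rVert_{L_{t,x}^{d+1}(I\times\mathbb{R}^d)}\leq L<\infty$, one partitions $I$ into $J=J(L,\eta)$ consecutive subintervals $I_1,\dots,I_J$ on each of which $\lVert\tilde u\rVert_{L_{t,x}^{d+1}(I_j\times\mathbb{R}^d)}\leq\eta$ for a small absolute constant $\eta$ to be chosen; the general statement then follows by iterating the small-norm case $J$ times, with the final constant $C(E,L)$ growing like $C_0^J$ (an exponential in $L$). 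So the heart of the matter is the estimate on a single subinterval where the solution norm and error are small.

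On such a subinterval, set $w=u-\tilde u$ and fix $\alpha=\frac{d^2-4d+1}{2(d-1)}$. Then $w$ solves a forced wave equation with source $-[F(\tilde u + w)-F(\tilde u)]+e$, with $F(u)=|u|^2u$, and initial data of size $\leq\epsilon$. I would run a contraction/continuity argument in the norm
\[
\lVert w\rVert_X := \lVert |\nabla|^{\alpha} w\rVert_{S_{s_c-\alpha}(I_j)} + \lVert |\nabla|^{\alpha-1} w_t\rVert_{S_{s_c-\alpha}(I_j)} + \lVert w\rVert_{L_{t,x}^{d+1}(I_j)},
\]
applying the Strichartz estimate followed by Lemma \ref{l20} (applied to the three quadratic pieces $\tilde u^2$, $\tilde u w$, $w^2$ coming from $F(\tilde u+w)-F(\tilde u) = w(w^2 + \tilde u w + \tilde u^2)$ plus conjugates) and the nonlinearity bound for the $e$ term. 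Before this, one first controls $\tilde u$ itself: from the hypotheses $\sup_t\lVert(\tilde u,\tilde u_t)\rVert_{\dot H^{s_c}\times\dot H^{s_c-1}}\leq E$, the smallness of $\lVert\tilde u\rVert_{L_{t,x}^{d+1}(I_j)}$, and the bound on $e$, a Strichartz/bootstrap argument (exactly as in the proof of Theorem \ref{l22}) gives $\lVert|\nabla|^\alpha\tilde u\rVert_{S_{s_c-\alpha}(I_j)}\lesssim E$. One also needs $\lVert\mathcal W(t)(u_0-\tilde u(0),u_1-\tilde u_t(0))\rVert_{L_{t,x}^{d+1}}\lesssim\epsilon$, which follows from the Strichartz estimate and the hypothesis $\lVert(u_0-\tilde u(0),u_1-\tilde u_t(0))\rVert_{\dot H^{s_c}\times\dot H^{s_c-1}}\leq\epsilon$. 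Plugging all this in, the nonlinear contribution to $\lVert w\rVert_X$ is $\lesssim (\eta^2 + \eta E^{?} + \dots)\lVert w\rVert_X + (\text{lower order in }\lVert w\rVert_X) + C\epsilon$, so for $\eta$ small enough depending on $E$ (and $\epsilon$ small) a standard continuity argument closes to give $\lVert w\rVert_X\lesssim\epsilon$ on $I_j$, i.e. \eqref{l28}--\eqref{l29} on $I_j$, and then \eqref{l30} follows from $\lVert|\nabla|^\alpha u\rVert_{S_{s_c-\alpha}}\leq\lVert|\nabla|^\alpha\tilde u\rVert_{S_{s_c-\alpha}}+\lVert|\nabla|^\alpha w\rVert_{S_{s_c-\alpha}}\lesssim E+\epsilon$. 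Existence and uniqueness of $u$ with initial data $(u_0,u_1)$ on $I_j$ comes from Theorem \ref{l22} together with these a priori bounds.

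The one genuinely delicate point is the \emph{bookkeeping across the subintervals}: when passing from $I_j$ to $I_{j+1}$ one must verify that the hypotheses of the small-norm case still hold at the new initial time, i.e. that $\lVert(u(t_j)-\tilde u(t_j),u_t(t_j)-\tilde u_t(t_j))\rVert_{\dot H^{s_c}\times\dot H^{s_c-1}}$ is still small. This quantity is controlled by the Strichartz bound on $w$ accumulated over $I_1\cup\dots\cup I_j$, which grows geometrically; so one chooses $\epsilon_1=\epsilon_1(E,L)$ small enough (roughly $\epsilon_1 \sim \eta(E)\,C_0^{-J(L,\eta(E))}$) that even after $J$ iterations the error remains below the threshold required for the next step. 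Keeping this induction consistent — the error bound at step $j$ feeding the smallness hypothesis at step $j+1$ — is the main obstacle; the per-interval nonlinear estimates themselves are routine once Lemmas \ref{l19}--\ref{l21} and the Strichartz inequality \eqref{l6} are in hand. Finally, the uniqueness of the full solution $u$ on $I$ follows by concatenating the uniqueness on each $I_j$ (using the remark following Theorem \ref{l22}).
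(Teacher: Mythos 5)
Your proposal follows essentially the same route as the paper: first bound $\lVert|\nabla|^{\alpha}\tilde u\rVert_{S_{s_c-\alpha}}$ by $C(E,L)$ via a subdivision on which $\lVert\tilde u\rVert_{L_{t,x}^{d+1}}$ is small, then subdivide again so the relevant Strichartz norm of $\tilde u$ is small, run the Strichartz-plus-Lemma~\ref{l20} bootstrap for $w=u-\tilde u$ on each subinterval, and induct across subintervals with the accumulated error forcing $\epsilon_1=\epsilon_1(E,L)$ small; the derivation of \eqref{l28} and \eqref{l30} from \eqref{l29} is also as in the paper. The only cosmetic difference is that the paper's second partition is taken with respect to smallness of $\lVert|\nabla|^{\alpha}\tilde u\rVert_{L_t^{d+1}L_x^{2d(d^2-1)/(d^3-d^2-5d+1)}}$ rather than of $\lVert\tilde u\rVert_{L_{t,x}^{d+1}}$, which makes the linear-in-$w$ coefficient in the key estimate \eqref{l32} small directly.
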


\begin{proof}
Fix $\alpha=\frac{d^2-4d+1}{2(d-1)}$.  We begin by obtaining a bound on
\begin{align*}
\lVert |\nabla|^{\alpha}\tilde{u}\rVert_{S_{s_c-\alpha}(I)}.
\end{align*}
To do so, we fix $\epsilon_1, \eta>0$ (to be determined later in the argument) and partition $I$ into $J_0=J_0(L,\eta)$ subintervals $I_j=[t_j,t_{j+1}]$ such that for each $j=1,\cdots, J_0$,
\begin{align*}
\lVert \tilde{u}\rVert_{L_{t,x}^{d+1}(I_j\times\mathbb{R}^d)}&\leq \eta.
\end{align*}
Applying the Strichartz inequality followed by Lemma $\ref{l21}$, we obtain 
\begin{align*}
\lVert |\nabla|^\alpha\tilde{u}\rVert_{S_{s_c-\alpha}(I_j)}&\lesssim \lVert (\tilde{u}(t_j),\tilde{u}_t(t_j))\rVert_{\dot{H}_x^{s_c}\times \dot{H}_x^{s_c-1}}\\
&\hspace{0.2in}+\lVert |\nabla|^\alpha e\rVert_{N_{1+\alpha-s_c}(I_j)}+\lVert |\nabla|^\alpha F(\tilde{u}(s))\rVert_{N_{1+\alpha-s_c}(I_j)}\\
&\lesssim E+\epsilon+\lVert \tilde{u} \rVert_{L_{t,x}^{d+1}}^2\lVert |\nabla|^\alpha\tilde{u}\rVert_{S_{s_c-\alpha}(I_j)}\\
&\lesssim E+\epsilon_1+\eta^2\lVert |\nabla|^\alpha\tilde{u}\rVert_{S_{s_c-\alpha}(I_j)} 
\end{align*}
for each $\epsilon<\epsilon_1$.  Choosing $\eta>0$ sufficiently small and $\epsilon_1<E$, we obtain 
\begin{align*}
\lVert |\nabla|^\alpha \tilde{u}\rVert_{S_{s_c-\alpha}(I_j)}\lesssim E.
\end{align*}
Summing the contributions of the subintervals, we conclude 
\begin{align}
\lVert |\nabla|^\alpha \tilde{u}\rVert_{S_{s_c-\alpha}(I)}\lesssim C(E,L).\label{l31}
\end{align}
as desired.

To continue, fixing $\epsilon_1\leq E$ and $\delta>0$ (to be determined later in the argument), we note that $(d+1,\frac{2d(d^2-1)}{d^3-d^2-5d+1})$ is an $\dot{H}_x^{\frac{d+1}{2(d-1)}}$-wave admissible pair.  Then by virtue of ($\ref{l31}$), we may divide $I$ into $J_1=J_1(E,L,\delta)$ subintervals $I_j=[t_j,t_{j+1}]$ such that for each $j=1,\cdots, J_1$, we have
\begin{align*}
\lVert |\nabla|^\alpha\tilde{u}\rVert_{L_t^{d+1}L_x^\frac{2d(d^2-1)}{d^3-d^2-5d+1}}&\leq \delta.
\end{align*}

Let $w=u-\tilde{u}$, and define, for $t\in I$ and $j=1,\cdots, J_1$,
\begin{align*}
\gamma_j(t):=\lVert |\nabla|^\alpha [F(\tilde{u}+w)-F(\tilde{u})]\rVert_{N_{1+\alpha-s_c}([t_j,t])}.
\end{align*}

\noindent Let $j\in \{1,\cdots,J_1\}$ be given.  We now obtain an estimate on $\gamma_j(t)$.  We begin by writing 
\begin{align*}
F(x)-F(y)=(x-y)[(x-y)^2+3xy].
\end{align*}
Invoking Lemma $\ref{l20}$, followed by Minkowski's and H\"older's inequalities, we obtain
\begin{align}
\nonumber \gamma_j(t)&\leq \lVert |\nabla|^\alpha w\rVert_{S_{s_c-\alpha}}\\
\nonumber &\hspace{0.2in}\big[\lVert w^2+3(\tilde{u}+w)\tilde u\rVert_{L_{t,x}^\frac{d+1}{2}}+\lVert |\nabla|^\alpha [w^2+3(\tilde{u}+w)\tilde{u}]\rVert_{L_t^\frac{d+1}{2}L_x^\frac{2d(d^2-1)}{d^3+d^2-7d+1}}\big]\\
\nonumber &\lesssim \lVert |\nabla|^\alpha w\rVert_{S_{s_c-\alpha}}\big[\lVert w^2\rVert_{L_{t,x}^\frac{d+1}{2}}+\lVert \tilde{u}^2\rVert_{L_{t,x}^\frac{d+1}{2}} + \lVert w\tilde u\rVert_{L_{t,x}^\frac{d+1}{2}}\\
\nonumber &\hspace{0.2in}+\lVert |\nabla|^\alpha [w^2]\rVert_{L_t^\frac{d+1}{2}L_x^\frac{2d(d^2-1)}{d^3+d^2-7d+1}}+\lVert |\nabla|^\alpha [\tilde{u}^2]\rVert_{L_t^\frac{d+1}{2}L_x^\frac{2d(d^2-1)}{d^3+d^2-7d+1}}\\
\nonumber &\hspace{0.2in}+\lVert |\nabla|^\alpha [w\tilde{u}]\rVert_{L_t^\frac{d+1}{2}L_x^\frac{2d(d^2-1)}{d^3+d^2-7d+1}}\big]\\
\nonumber &\lesssim \lVert |\nabla|^\alpha w\rVert_{S_{s_c-\alpha}}\big[\lVert w\rVert_{L_{t,x}^{d+1}}^2 + \lVert \tilde{u}\rVert_{L_{t,x}^{d+1}   }^2 + \lVert \tilde{u}\rVert_{L_{t,x}^{d+1}   } \lVert w\rVert_{L_{t,x}^{d+1}   } \\
\nonumber &\hspace{0.2in}+\lVert |\nabla|^\alpha w\rVert_{L_t^{d+1}L_x^\frac{2d(d^2-1)}{d^3-d^2-5d+1}} \lVert w\rVert_{L_{t,x}^{d+1}   }+ \lVert |\nabla|^\alpha \tilde{u} \rVert_{L_t^{d+1}L_x^\frac{2d(d^2-1)}{d^3-d^2-5d+1}} \lVert \tilde{u}\rVert_{L_{t,x}^{d+1}   }\\
\nonumber &\hspace{0.2in}+\lVert |\nabla|^\alpha w\rVert_{L_t^{d+1}L_x^\frac{2d(d^2-1)}{d^3-d^2-5d+1}} \lVert \tilde{u}\rVert_{L_{t,x}^{d+1}   }+ \lVert w\rVert_{L_{t,x}^{d+1}   }\lVert |\nabla|^\alpha \tilde{u}\rVert_{L_t^{d+1}L_x^\frac{2d(d^2-1)}{d^3-d^2-5d+1}}\big]\nonumber\\
\label{l32} &\lesssim \lVert |\nabla|^\alpha w\rVert_{S_{s_c-\alpha}(I_j)}^3+\delta\lVert |\nabla|^\alpha w\rVert_{S_{s_c-\alpha}(I_j)}^2+\delta^2\lVert |\nabla|^\alpha w\rVert_{S_{s_c-\alpha}(I_j)}.
\end{align}
where we have used Lemma $\ref{l19}$ along with Sobolev's inequality in obtaining the last inequality.

Having obtained the bound ($\ref{l32}$) on $\gamma_j(t)$ for all $j\in \{1,\cdots, J_1\}$, we next show by induction that for every $j=1,\cdots, J_1$, there exists a constant $C(j,d)>0$ such that
\begin{align}
\gamma_j(t)\leq C(j,d)\epsilon.\label{l33}
\end{align}

In the remainder of the argument, we let $\epsilon\in\mathbb{R}$ be arbitrary such that $\epsilon<\epsilon_1$ and we note that without loss of generality we may assume $t_1=0$.  

To obtain $(\ref{l33})$ we argue as follows: we first observe that when $j=1$, the Strichartz inequality gives, for every $t\in I_1$,
\begin{align}
\nonumber \lVert |\nabla|^\alpha w\rVert_{S_{s_c-\alpha}([t_1,t])}&\lesssim \lVert (w(t_1),w_t(t_1))\rVert_{\dot{H}_x^{s_c}\times \dot{H}_x^{s_c-1}}\\
\nonumber &\hspace{0.2in}+\lVert |\nabla|^\alpha [F(\tilde{u})-F(u)]\rVert_{N_{1+\alpha-s_c}([t_1,t])}+\lVert |\nabla|^\alpha e\rVert_{N_{1+\alpha-s_c }(I_1)}\\
\nonumber &\lesssim \lVert (w(0),w_t(0))\rVert_{\dot{H}_x^{s_c}\times \dot{H}_x^{s_c-1}}+\gamma_1(t)+\epsilon\\
\label{l34}&\lesssim \epsilon+\gamma_1(t)+\epsilon.
\end{align}
Putting ($\ref{l32}$) and ($\ref{l34}$) together, we obtain
\begin{align*}
\gamma_1(t)&\lesssim (\gamma_1(t)+\epsilon)^3+\delta(\gamma_1(t)+\epsilon)^2+\delta^2(\gamma_1(t)+\epsilon).
\end{align*}
A bootstrap argument then implies that for $\delta$ and $\epsilon$ sufficiently small, $\gamma_1(t)\lesssim \epsilon$ for all $t\in I_1$.

For the induction step, we now assume that for all $j\leq j_0$ there exists $C(j,d,\delta)>0$ such that $\gamma_j(t)\leq C(j,d)\epsilon$ for all $t\in I_j$.  We then prove the validity of ($\ref{l33}$) for $j=j_0+1$.  

\noindent Note that for every $t\in I_{j_0+1}$, two successive applications of the Strichartz inequality give
\begin{align}
\nonumber &\lVert |\nabla|^\alpha w\rVert_{S_{s_c-\alpha}([t_{j_0+1},t])}\lesssim \lVert (w(t_{j_0+1}),w_t(t_{j_0+1}))\rVert_{\dot{H}_x^{s_c}\times \dot{H}_x^{s_c-1}}\\
\nonumber &\hspace{0.4in}+\lVert |\nabla|^\alpha [F(\tilde{u})-F(u)]\rVert_{N_{1+\alpha-s_c}([t_{j_0+1},t])}+\lVert |\nabla|^\alpha e\rVert_{N_{1+\alpha-s_c }(I_{j_0+1})}\\
\nonumber &\hspace{0.2in}\lesssim \lVert (w(t_{j_0+1}),w_t(t_{j_0+1}))\rVert_{\dot{H}_x^{s_c}\times \dot{H}_x^{s_c-1}}+\gamma_{j_0+1}(t)+\epsilon\\
\nonumber &\hspace{0.2in}\lesssim \lVert (w(0),w_t(0))\rVert_{\dot{H}_x^{s_c}\times\dot{H}_x^{s_c-1}}+\lVert |\nabla|^\alpha [F(\tilde{u})-F(u)]\rVert_{N_{1+\alpha-s_c}([0,t_{j_0+1}])}\\
\nonumber &\hspace{0.4in}+\lVert |\nabla|^{\alpha}e\rVert_{N_{1+\alpha-s_c}([0,t_{j_0+1}])}+\gamma_{j_0+1}(t)+\epsilon\\
\nonumber &\hspace{0.2in}\lesssim 3\epsilon+\gamma_{j_0+1}(t)+\sum_{k=1}^{j_0} \gamma_k(t_{k+1})\\
\label{l35} &\hspace{0.2in}\lesssim \bigg(3+\sum_{k=1}^{j_0}C(k,d)\bigg)\epsilon+\gamma_{j_0+1}(t)
\end{align}
where we used the induction assumption in obtaining the last inequality.  Noting $\sum_{k=1}^{j_0} C(k,d)\lesssim C(j_0,d)$ and combining ($\ref{l32}$) and $(\ref{l35})$, we obtain
\begin{align*}
\gamma_{j_0+1}(t)&\lesssim (\gamma_{j_0+1}(t)+\epsilon)^3+\delta(\gamma_{j_0+1}(t)+\epsilon)^2+\delta^2(\gamma_{j_0+1}(t)+\epsilon).
\end{align*}
A bootstrap argument then implies that for $\delta$ and $\epsilon_1$ sufficiently small, $\gamma_{j_0+1}(t)\lesssim \epsilon$ for all $t\in I_{j_0+1}$.  This immediately establishes the inductive step $j_0\rightarrow j_0+1$.

Combining the estimates $(\ref{l33})$ that we have obtained on $\gamma_j(t)$ for $j=1,\cdots, J_1$, we obtain
\begin{align}
\lVert |\nabla|^\alpha [F(u)-F(\tilde u)]\rVert_{N_{1+\alpha-s_c}(I)}&\lesssim \sum_{j=1}^{J_1} \gamma_j(t_{j+1})\lesssim C(E,L)\epsilon
\label{l36}
\end{align}
where we note that $J_1=J_1(E,L)$.

We now conclude the proof by showing the desired bounds ($\ref{l28}$)-($\ref{l30}$).  For ($\ref{l28}$), we note that by the Sobolev embedding and the definition of the $S_{s_c-\alpha}$ norm, we have
\begin{align*}
\lVert \tilde{u}-u\rVert_{L_{t,x}^{d+1}}&\lesssim \lVert |\nabla|^\alpha(\tilde{u}-u)\rVert_{L_t^{d+1}L_x^{\frac{2d(d^2-1)}{d^3-d^2-5d+1}}}\\
&\lesssim \lVert |\nabla|^\alpha (\tilde{u}-u)\rVert_{S_{s_c-\alpha}}.
\end{align*}
On the other hand, for ($\ref{l30}$), Minkowski's inequality and ($\ref{l31}$) imply
\begin{align*}
\lVert |\nabla|^\alpha u\rVert_{S_{s_c-\alpha}}&\leq \lVert |\nabla|^\alpha (u-\tilde{u})\rVert_{S_{s_c-\alpha}}+\lVert |\nabla|^\alpha \tilde{u}\rVert_{S_{s_c-\alpha}}\\
&\lesssim \lVert |\nabla|^\alpha (\tilde{u}-u)\rVert_{S_{s_c-\alpha}}+C(E,L).
\end{align*}
Thus, both ($\ref{l28}$) and ($\ref{l30}$) follow from ($\ref{l29}$), which is proved as follows: by the Strichartz inequality and $(\ref{l36})$, we have
\begin{align*}
\lVert |\nabla|^\alpha (\tilde{u}-u)\rVert_{S_{s_c-\alpha}}&\lesssim \epsilon+\lVert |\nabla|^\alpha F(\tilde{u})-F(u)\rVert_{N_{1+\alpha-s_c}}\\
&\lesssim C(E,L)\epsilon.
\end{align*}
\end{proof}

\section{Finite speed of propagation.}

A key property of NLW which is not present in the NLS setting is the finite speed of propagation.  Using this property, we next give the following lemma which will facilitate our arguments in the proofs of Lemma $\ref{l49}$ and Lemma $\ref{l85}$.

Let $\psi$ be a smooth radial function such that $0\leq \psi\leq 1$ and
\begin{align*}
\psi(x)=\left\lbrace\begin{array}{l}0,\quad |x|<1,\\
1,\quad |x|\geq 2.\end{array}\right.
\end{align*}

For all $R>0$, define $\psi_R\in C^\infty(\mathbb{R}^d)$ by
\begin{align*}
\psi_R(x)=\psi(\frac{x}{R}),\quad x\in\mathbb{R}^d.
\end{align*}

\begin{lemma}
\label{l37}
Suppose that $u:I\times\mathbb{R}^d\rightarrow\mathbb{R}$ is an almost periodic solution to (NLW) with maximal interval of existence $I$ and $(u,u_t)\in L_t^\infty(I;\dot{H}_x^{s_c}\times\dot{H}_x^{s_c-1})$.

Then for each $\epsilon>0$ there exists $R>0$ such that for every $t\in I$, if $(v^{(t)}_0,v^{(t)}_1)$ is defined by
\begin{align*}
(v^{(t)}_0,v^{(t)}_1):=(\tfrac{1}{N(t)}u(t,x(t)+\tfrac{x}{N(t)}),\,\tfrac{1}{N(t)^{2}}u_t(t,x(t)+\tfrac{x}{N(t)}))\in \dot{H}_x^{s_c}\times\dot{H}_x^{s_c-1}
\end{align*}
and $v^{(t)}_R$ is the solution to (NLW) with initial data $(\psi_Rv^{(t)}_0,\psi_Rv^{(t)}_1)$ given by Theorem $\ref{l22}$, then $v^{(t)}_R$ is global, satisfies the bound
\begin{align}
\lVert (v^{(t)}_R(\tau),\partial_t v^{(t)}_R(\tau))\rVert_{L_t^\infty(\mathbb{R};\dot{H}_x^{s_c}\times\dot{H}_x^{s_c-1})}< \epsilon,\label{l38}
\end{align}
and for $r\in I-t=\{s-t:s\in I\}$, and $x\in\{x\in\mathbb{R}^d:|x|\geq 2R+rN(t)\}$ we have
\begin{align}
v^{(t)}(rN(t),x)=v^{(t)}_{R}(rN(t),x)\label{l39}
\end{align}
where $v^{(t)}(\tau,x)=\tfrac{1}{N(t)}u(t+\tfrac{\tau}{N(t)},x(t)+\tfrac{x}{N(t)})$ is the solution to (NLW) with initial data $(v_0^{(t)},v_1^{(t)})$.
\end{lemma}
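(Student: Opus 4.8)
The plan is to combine three ingredients: (i) the precompactness of the rescaled orbit, which forces the truncated data $(\psi_R v^{(t)}_0,\psi_R v^{(t)}_1)$ to be \emph{small} in $\dot H^{s_c}_x\times\dot H^{s_c-1}_x$, uniformly in $t\in I$, once $R$ is large; (ii) the small-data local theory of Theorem~\ref{l22} together with the Strichartz estimates, to turn this smallness into global existence of $v^{(t)}_R$ and into the bound (\ref{l38}); and (iii) the finite speed of propagation for (NLW), which gives the matching identity (\ref{l39}). Throughout, recall that $v^{(t)}$ is genuinely a solution to (NLW) with data $(v^{(t)}_0,v^{(t)}_1)$, this being a consequence of the scaling and translation symmetries of the equation.

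First I would record that, since $u$ is almost periodic modulo symmetries, Theorem~\ref{l11} tells us $\{(v^{(t)}_0,v^{(t)}_1):t\in I\}$ has compact closure in $\dot H^{s_c}_x\times\dot H^{s_c-1}_x$. The crux, which I expect to be the main obstacle, is to establish
\begin{align*}
\sup_{t\in I}\Big(\lVert\psi_R v^{(t)}_0\rVert_{\dot H^{s_c}_x}+\lVert\psi_R v^{(t)}_1\rVert_{\dot H^{s_c-1}_x}\Big)\longrightarrow 0\qquad\text{as }R\to\infty.
\end{align*}
To prove this I would first check that multiplication by $\psi_R$ is bounded on $\dot H^s_x$ for $s\in\{s_c,s_c-1\}$ with a bound independent of $R$: write $\psi_R=1-(1-\psi_R)$, with $1-\psi_R$ a fixed function in $C_0^\infty(\mathbb{R}^d)$ rescaled to scale $R$; apply the fractional Leibniz rule (Lemma~\ref{l19}) together with the Sobolev embedding; invoke Remark~\ref{l8} to place $|\nabla|^{s}(1-\psi_R)$ in the Lebesgue space $L_x^{d/s}$ (the hypothesis $d\ge 6$ guarantees $d/s_c=2d/(d-2)$ and $d/(s_c-1)=2d/(d-4)$ lie in $[2,d)$, or that $s$ is a nonnegative integer); and note that by Remark~\ref{l9} the identity $\lVert|\nabla|^s[g(R^{-1}\cdot)]\rVert_{L^p_x}=R^{-s+d/p}\lVert|\nabla|^s g\rVert_{L^p_x}$ makes the $R$-powers cancel for $p=d/s$. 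Granting this, a standard finite-net argument closes the claim: given $\eta>0$, cover the compact closure of $\{(v^{(t)}_0,v^{(t)}_1)\}$ by finitely many $\eta$-balls centered at $(f_i,g_i)$; for each fixed $i$ one has $\lVert\psi_R f_i\rVert_{\dot H^{s_c}_x}+\lVert\psi_R g_i\rVert_{\dot H^{s_c-1}_x}\to 0$ as $R\to\infty$ (reduce to Schwartz data by density, where it is immediate since $\psi_R g$ is supported in $\{|x|\ge R\}$); and the uniform boundedness of multiplication by $\psi_R$ absorbs the $\eta$-errors. Write $\sigma(R):=\sup_{t\in I}(\lVert\psi_R v^{(t)}_0\rVert_{\dot H^{s_c}_x}+\lVert\psi_R v^{(t)}_1\rVert_{\dot H^{s_c-1}_x})$, so $\sigma(R)\to0$.

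With $\sigma(R)$ small, the pair $(d+1,d+1)$ is $\dot H^{s_c}_x$-wave admissible, so the global-in-time Strichartz estimate gives $\lVert\mathcal W(t)(\psi_R v^{(t)}_0,\psi_R v^{(t)}_1)\rVert_{L^{d+1}_{t,x}(\mathbb R\times\mathbb R^d)}\lesssim\sigma(R)$. Choosing $R$ large enough that the right-hand side is below $\delta_0(d,1)$, Theorem~\ref{l22} applied on the whole line $I=\mathbb R$ produces the unique solution $v^{(t)}_R$; in particular $v^{(t)}_R$ is global and $\lVert v^{(t)}_R\rVert_{L^{d+1}_{t,x}(\mathbb R\times\mathbb R^d)}\lesssim\sigma(R)$. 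Running the Strichartz/nonlinear estimates as in the local theory — partition $\mathbb R$ into finitely many subintervals on which the $L^{d+1}_{t,x}$-norm of $v^{(t)}_R$ is small, control the nonlinearity via Lemma~\ref{l21}, bootstrap, and chain the resulting bounds across subintervals using the $L^\infty_tL^2_x$ endpoint in (\ref{l6}) with $\mu=s_c$ — yields $\lVert|\nabla|^{\alpha}v^{(t)}_R\rVert_{S_{s_c-\alpha}(\mathbb R)}\lesssim\sigma(R)$ with $\alpha=\tfrac{d^2-4d+1}{2(d-1)}$ and hence $\lVert(v^{(t)}_R,\partial_t v^{(t)}_R)\rVert_{L^\infty_t(\mathbb R;\dot H^{s_c}_x\times\dot H^{s_c-1}_x)}\lesssim\sigma(R)$. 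Enlarging $R$ once more so that this is $<\epsilon$ establishes (\ref{l38}).

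Finally, since $\psi_R\equiv1$ on $\{|x|\ge2R\}$, the data $(v^{(t)}_0,v^{(t)}_1)$ and $(\psi_R v^{(t)}_0,\psi_R v^{(t)}_1)$ coincide on $\{|x|\ge2R\}$. As $v^{(t)}$ and $v^{(t)}_R$ both solve (NLW) with these respective data and the cubic nonlinearity is local, the finite speed of propagation for the wave equation, combined with the local uniqueness of solutions to (NLW), shows that $v^{(t)}$ and $v^{(t)}_R$ agree at every spacetime point whose backward light cone, traced to time $0$, lies entirely in $\{|x|\ge2R\}$, i.e.\ at every $(\tau,x)$ with $|x|\ge2R+|\tau|$. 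Specializing to $\tau=rN(t)$ with $r\in I-t$ (the constraint $r\in I-t$ is exactly what makes $v^{(t)}(\tau,\cdot)$ defined, while $v^{(t)}_R$ is global and imposes nothing) yields $v^{(t)}(rN(t),x)=v^{(t)}_R(rN(t),x)$ on the range in (\ref{l39}), completing the proof. The only genuinely nontrivial point is the uniform-in-$t$ smallness of the truncation in the second paragraph; the rest is a routine appeal to the local theory and to a standard property of the wave equation.
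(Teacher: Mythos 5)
Your proposal is correct and takes essentially the same route as the paper: the $R$-uniform boundedness of multiplication by $\psi_R$ on $\dot H_x^{s_c}\times\dot H_x^{s_c-1}$ (fractional Leibniz, Sobolev, and the scaling identity of Remark~\ref{l9}), combined with precompactness of the rescaled orbit and density of $C_0^\infty$ (which you should use in place of merely Schwartz data, so that $\psi_R f_i\equiv 0$ for $R$ large), yields the uniform smallness of the truncated data, after which the small-data theory of Theorem~\ref{l22} plus Strichartz gives global existence and ($\ref{l38}$), and finite speed of propagation gives ($\ref{l39}$). The only difference is cosmetic: you establish the uniform smallness by a direct finite-net covering argument, whereas the paper argues by contradiction along sequences $R_n\to\infty$, $t_n\in I$ --- the same compactness argument in two packagings.
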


\begin{proof}
We argue as in \cite{KenigMerleSupercritical}.  Fix $R>0$ to be determined later in the argument and let $t\in I$ be arbitrary.  Our first goal is to obtain the global solution $v_R^{(t)}$ to (NLW) via the local well-posedness result, Theorem $\ref{l22}$.  

We begin by showing that there exists a constant $A>0$ (independent of $R$ and $t$) such that 
\begin{align}
\lVert (\psi_Rv_0^{(t)},\psi_Rv_1^{(t)})\rVert_{\dot{H}_x^{s_c}\times\dot{H}_x^{s_c-1}}\leq A.\label{l40}
\end{align}
Using Lemma $\ref{l19}$ followed by the Sobolev embedding and Remark $\ref{l9}$, we argue as follows:
\begin{align}
\nonumber &\lVert(\psi_Rv^{(t)}_0,\psi_Rv^{(t)}_1)\rVert_{\dot{H}_x^{s_c}\times\dot{H}_x^{s_c-1}}\\
\nonumber &\hspace{0.2in}\leq \lVert (\psi_R-1)v^{(t)}_0\rVert_{\dot{H}_x^{s_c}}+\lVert v^{(t)}_0\rVert_{\dot{H}_x^{s_c}}+\lVert (\psi_R-1)v^{(t)}_1\rVert_{\dot{H}_x^{s_c-1}}+\lVert v^{(t)}_1\rVert_{\dot{H}_x^{s_c-1}}\\
\nonumber &\hspace{0.2in}\lesssim \lVert |\nabla|^{s_c} (\psi_R-1)\rVert_{L_x^\frac{2d}{d-2}}\lVert v^{(t)}_0\rVert_{L_x^d}+\lVert \psi_R-1\rVert_{L_x^\infty}\lVert |\nabla|^{s_c}v^{(t)}_0\rVert_{L_x^2}+\lVert v^{(t)}_0\rVert_{\dot{H}_x^{s_c}}\\
\nonumber &\hspace{0.4in}+\lVert |\nabla|^{s_c-1} (\psi_R-1)\rVert_{L_x^\frac{2d}{d-4}}\lVert v^{(t)}_1\rVert_{L_x^\frac{d}{2}}+\lVert \psi_R-1\rVert_{L_x^\infty}\lVert |\nabla|^{s_c-1}v^{(t)}_1\rVert_{L_x^2}\\
\nonumber &\hspace{0.4in}+\lVert v^{(t)}_1\rVert_{\dot{H}_x^{s_c-1}}\\
\nonumber &\hspace{0.2in}\lesssim \bigg[\lVert \frac{1}{R^{s_c}}|\nabla|^{s_c}(\psi-1)(\frac{x}{R})\rVert_{L_x^\frac{2d}{d-2}}+\lVert \psi_R-1\rVert_{L_x^\infty}+1\bigg]\lVert v^{(t)}_0\rVert_{\dot{H}_x^{s_c}}\\
\nonumber &\hspace{0.4in}+\bigg[\lVert \frac{1}{R^{s_c-1}}|\nabla|^{s_c-1}(\psi-1)(\frac{x}{R})\rVert_{L_x^\frac{2d}{d-4}}+\lVert \psi_R-1\rVert_{L_x^\infty}+1\bigg]\lVert v^{(t)}_1\rVert_{\dot{H}_x^{s_c-1}}\\
\nonumber &\hspace{0.2in}=\bigg[\lVert |\nabla|^{s_c}(\psi-1)\rVert_{L_x^\frac{2d}{d-2}}+\lVert\psi_R-1\rVert_{L_x^\infty}+1\bigg]\lVert v^{(t)}_0\rVert_{\dot{H}_x^{s_c}}\\
\nonumber &\hspace{0.4in}+\bigg[\lVert |\nabla|^{s_c-1}(\psi-1)\rVert_{L_x^\frac{2d}{d-4}}+\lVert \psi_R-1\rVert_{L_x^\infty}+1\bigg]\lVert v^{(t)}_1\rVert_{\dot{H}_x^{s_c-1}}\\
\label{l41} &\hspace{0.2in}\lesssim \lVert (v^{(t)}_0,v^{(t)}_1)\rVert_{\dot{H}_x^{s_c}\times\dot{H}_x^{s_c-1}}.
\end{align}
where in the last inequality we note that by Remark $\ref{l8}$, $\psi-1\in C_0^\infty$ gives the finiteness of $\lVert |\nabla|^{s_c}(\psi-1)\rVert_{L_x^\frac{2d}{d-2}}$ and $\lVert |\nabla|^{s_c-1}(\psi-1)\rVert_{L_x^\frac{2d}{d-4}}$, with $s_c=2$ for $d=6$ and $\frac{2d}{d-2},\frac{2d}{d-4}\in [2,d)$ for $d\geq 7$.

Hence, by the scaling invariance ($\ref{l1}$),
\begin{align*}
\lVert (\psi_Rv^{(t)}_0,\psi_Rv^{(t)}_1)\rVert_{\dot{H}_x^{s_c}\times\dot{H}_x^{s_c-1}}&\lesssim \lVert(v_0^{(t)},v_1^{(t)})\rVert_{\dot{H}_x^{s_c}\times\dot{H}_x^{s_c-1}}\\
&\lesssim \lVert (u(t),u_t(t))\rVert_{\dot{H}_x^{s_c}\times\dot{H}_x^{s_c-1}}\\
&\lesssim \lVert (u,u_t)\rVert_{L_t^\infty(I;\dot{H}_x^{s_c}\times\dot{H}_x^{s_c-1})},
\end{align*}
and we set $A=C\lVert (u,u_t)\rVert_{L_t^\infty(I;\dot{H}_x^{s_c}\times\dot{H}_x^{s_c-1})}$ to get the desired bound.

Let us now choose $\delta_0>0$ as in Theorem $\ref{l22}$.  We next show that for every $0<\delta<\delta_0$ we may choose $R$ independent of $t$ such that 
\begin{align}
\lVert \mathcal{W}(\tau)(\psi_Rv_0^{(t)},\psi_Rv_1^{(t)})\rVert_{L_{\tau,x}^{d+1}}<\delta.\label{l42}
\end{align}
To do so, using the Strichartz inequality we see that it suffices to prove
\begin{align}
\lVert (\psi_Rv_0^{(t)},\psi_Rv_1^{(t)})\rVert_{\dot{H}_x^{s_c}\times\dot{H}_x^{s_c-1}}<\frac{\delta}{C},\label{l43}
\end{align}
where $C$ is the constant from the Strichartz inequality.  Suppose for contradiction that the claim ($\ref{l43}$) failed.  We may then choose $\delta'_0>0$ together with sequences $R_n\rightarrow\infty$ and $t_n\in I$ such that for each $n\in\mathbb{N}$
\begin{align}
\lVert (\psi_{R_n}v_{0}^{(t_n)},\psi_{R_n}v_{1}^{(t_n)})\rVert_{\dot{H}_x^{s_c}\times\dot{H}_x^{s_c-1}}>\delta'_0,\label{l44}
\end{align}
where $(v_{0}^{(t_n)},v_1^{(t_n)})$ is the pair defined in the statement of the theorem.  Since $u$ is almost periodic, we may then choose $(f,g)\in\dot{H}_x^{s_c}\times\dot{H}_x^{s_c-1}$ such that $(v_0^{(t_n)},v_1^{(t_n)})$ converges to $(f,g)$ in $\dot{H}_x^{s_c}\times\dot{H}_x^{s_c-1}$.  Moreover, the density of $C_0^\infty\times C_0^\infty$ in $\dot{H}_x^{s_c}\times\dot{H}_x^{s_c-1}$ allows us to to choose $(f_m,g_m)\in C_0^\infty\times C_0^\infty(\mathbb{R}^d)$ with $(f_m,g_m)$ converging to $(f,g)$ in $\dot{H}_x^{s_c}\times\dot{H}_x^{s_c-1}$.

Thus, invoking ($\ref{l41}$) and using Minkowski's inequality, we obtain
\begin{align}
\nonumber &\lVert (\psi_{R_n}v_0^{(t_n)},\psi_{R_n}v_1^{(t_n)})\rVert_{\dot{H}_x^{s_c}\times\dot{H}_x^{s_c-1}}\\
\nonumber &\hspace{0.2in} \lesssim\lVert (\psi_{R_n}(v_{0}^{(t_n)}-f),\psi_{R_n}(v_1^{(t_n)}-g))\rVert_{\dot{H}_x^{s_c}\times\dot{H}_x^{s_c-1}}\\
\nonumber &\hspace{0.4in} +\lVert (\psi_{R_n}(f-f_m),\psi_{R_n}(g-g_m))\rVert_{\dot{H}_x^{s_c}\times\dot{H}_x^{s_c-1}}\\
\nonumber &\hspace{0.4in} +\lVert (\psi_{R_n}f_m,\psi_{R_n}g_m)\rVert_{\dot{H}_x^{s_c}\times\dot{H}_x^{s_c-1}}\\
\nonumber &\hspace{0.2in}\lesssim \lVert (v^{(t_n)}_{0}-f,v^{(t_n)}_{1}-g)\rVert_{\dot{H}_x^{s_c}\times\dot{H}_x^{s_c-1}}+\lVert (f-f_m,g-g_m)\rVert_{\dot{H}_x^{s_c}\times\dot{H}_x^{s_c-1}}\\
\label{l45}&\hspace{0.4in}+\lVert (\psi_{R_n}f_m,\psi_{R_n}g_m)\rVert_{\dot{H}_x^{s_c}\times\dot{H}_x^{s_c-1}}.
\end{align}
where we note that ($\ref{l41}$) holds for any $(v_0,v_1)\in\dot{H}_x^{s_c}\times\dot{H}_x^{s_c-1}$.  As $(f_m,g_m)\in C_0^\infty\times C_0^\infty$ and $\supp \psi_{R_n}\subset \{x:|x|>R_n\}$, we have 
\begin{align*}
\psi_{R_n}f_m\equiv \psi_{R_n}g_m\equiv 0.
\end{align*}
for $n$ sufficiently large.  Thus, taking the limit $n\rightarrow \infty$ in ($\ref{l45}$) followed by the limit $m\rightarrow\infty$ yields
\begin{align*}
\lVert (\psi_{R_n}v_{0}^{(t_n)},\psi_{R_n}v_{1}^{(t_n)})\rVert_{\dot{H}_x^{s_c}\times\dot{H}_x^{s_c-1}}\mathop{\longrightarrow}_{n\rightarrow\infty} 0.
\end{align*}
But this contradicts $(\ref{l44})$, proving that the desired estimate ($\ref{l43}$) holds.

Collecting ($\ref{l40}$) and ($\ref{l42}$), Theorem $\ref{l22}$ now implies that there exists a global solution $v_R^{(t)}$ with the bounds
\begin{align}
\label{l46}\lVert v^{(t)}_R\rVert_{L_{t,x}^{d+1}}\lesssim \epsilon_1,
\end{align}
\begin{align}
\label{l47}\lVert |\nabla|^{\frac{d^2-4d+1}{2(d-1)}}v^{(t)}_R\rVert_{S_{\frac{d+1}{2(d-1)}}(\mathbb{R})}+\lVert |\nabla|^{\frac{d^2-4d+1}{2(d-1)}-1}\partial_t v^{(t)}_R\rVert_{S_{\frac{d+1}{2(d-1)}}(\mathbb{R})}<\infty.
\end{align}
Moreover, using the Stricharz inequality and Lemma $\ref{l21}$ followed by the bounds ($\ref{l44}$), ($\ref{l46}$) and ($\ref{l47}$), we obtain 
\begin{align*}
&\lVert (v^{(t)}_R,\partial_t v^{(t)}_R)\rVert_{L_t^\infty(\mathbb{R};\dot{H}_x^{s_c}\times\dot{H}_x^{s_c-1})}\\
&\hspace{0.3in}\lesssim \lVert (\psi_Rv^{(t)}_0,\psi_Rv^{(t)}_1)\rVert_{\dot{H}_x^{s_c}\times\dot{H}_x^{s_c-1}}+\lVert |\nabla|^{\frac{d^2-4d+1}{2(d-1)}}(v^{(t)}_R)^3\rVert_{N_{\frac{d-3}{2(d-1)}}(\mathbb{R})}\\
&\hspace{0.3in}\lesssim \delta+\lVert |\nabla|^{\frac{d^2-4d+1}{2(d-1)}}v^{(t)}_R\rVert_{S_{\frac{d+1}{2(d-1)}}(\mathbb{R})}\lVert v^{(t)}_R\rVert_{L_{t,x}^{d+1}}^2\\
&\hspace{0.3in}\lesssim \delta+\delta^2.
\end{align*} 
Thus, choosing $\delta$ small enough such that $C(\delta+\delta^2)<\epsilon$ gives the bound $(\ref{l38})$ as desired.

Finally, we now address $(\ref{l39})$.  Given $t\in I$ and $r\in I-t\cap [0,\infty)$ we note that
\begin{align*}
v^{(t)}_{R}(0,x)&=v^{(t)}(0,x)\quad\textrm{and}\quad \partial_t v^{(t)}_R(0,x)=\partial_t v^{(t)}(0,x)
\end{align*}
on $|x|>2R$.  Then, the finite speed of propagation implies 
\begin{align*}
v^{(t)}_R(rN(t),x)=v^{(t)}(rN(t),x)
\end{align*}
on $|x|>2R+rN(t)$ as desired.
\end{proof}

\section{Finite time blow-up solution}
In this section, we show that the finite time blow-up solution described in Theorem $\ref{l16}$ cannot exist.  Arguing as in \cite{KenigMerleSupercritical,KillipVisanSupercriticalNLW3D}, we prove that the solution must have zero energy, contradicting the fact that the solution blows up.  We note that without loss of generality we may assume $\sup I=1$.

The first step is to note that the function $N(t)$ tends to infinity as $t$ approaches the blow-up time.  In the context of the nonlinear Schr\"odinger equation this property is given in \cite{KillipTaoVisanCubic,KillipVisanClayNotes}, while for the nonlinear wave equation, see \cite{KenigMerleSupercritical,KillipVisanSupercriticalNLW3D}.

\begin{lemma}
\label{l48}
Let $u:I\times\mathbb{R}^d\rightarrow\mathbb{R}$ be an almost periodic solution to (NLW) with maximal interval of existence $I$, $sup I=1$.  Then there exist $\epsilon>0$ and $C>0$ such that for all $t\in (1-\epsilon,1)$,
\begin{align*}
N(t)\geq \frac{C}{1-t}.
\end{align*}
\end{lemma}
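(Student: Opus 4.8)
The plan is to argue by contradiction against the maximality of $I$: using the compactness coming from almost periodicity, I would produce a lifespan $T_0>0$ that is uniform over the rescaled data $(v_0^{(t)},v_1^{(t)})$ (notation as in Lemma \ref{l37}), and then observe that the forward lifespan of the corresponding rescaled solution is exactly $N(t)(1-t)$, so that $N(t)(1-t)\ge T_0$. First, since $u$ is almost periodic modulo symmetries, the orbit
\[
K=\Big\{\big(\tfrac{1}{N(t)}u(t,x(t)+\tfrac{x}{N(t)}),\,\tfrac{1}{N(t)^2}u_t(t,x(t)+\tfrac{x}{N(t)})\big):t\in I\Big\}
\]
has compact closure in $\dot H^{s_c}_x\times\dot H^{s_c-1}_x$; put $A:=\sup_{\overline K}\lVert\cdot\rVert_{\dot H^{s_c}_x\times\dot H^{s_c-1}_x}<\infty$ and let $\delta_0=\delta_0(d,A)$ be the constant of Theorem \ref{l22}. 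Since $(d+1,d+1)$ is $\dot H^{s_c}_x$-wave admissible, the Strichartz estimate shows that $(f,g)\mapsto\mathcal W(\cdot)(f,g)$ maps $\dot H^{s_c}_x\times\dot H^{s_c-1}_x$ boundedly into $L^{d+1}_{t,x}(\mathbb R\times\mathbb R^d)$; combining this boundedness with absolute continuity of the integral and compactness of $\overline K$ (extract a convergent subsequence from any putative sequence of counterexamples and use the boundedness of $\mathcal W$), I obtain $T_0>0$ with
\[
\sup_{t\in I}\big\lVert\mathcal W(\tau)(v_0^{(t)},v_1^{(t)})\big\rVert_{L^{d+1}_{\tau,x}((-T_0,T_0)\times\mathbb R^d)}\le\delta_0 .
\]

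Now fix $t\in I$ with $t<1$ and set $v^{(t)}(\tau,x)=\tfrac{1}{N(t)}u\big(t+\tfrac{\tau}{N(t)},x(t)+\tfrac{x}{N(t)}\big)$. By the scaling invariance of (NLW), $v^{(t)}$ solves (NLW) with data $(v_0^{(t)},v_1^{(t)})$, and since the solution class is invariant under this scaling (the $L^{d+1}_{t,x}$ norm being scale-invariant), its maximal interval of existence is $N(t)(I-t)$ — any proper extension of $v^{(t)}$ would, after undoing the rescaling, extend $u$ beyond $I$. In particular the forward lifespan of $v^{(t)}$ equals $N(t)(1-t)$. On the other hand, Theorem \ref{l22} applied on $[0,T_0]$ with the bound above produces a solution with data $(v_0^{(t)},v_1^{(t)})$ on $[0,T_0]$, which by uniqueness agrees with $v^{(t)}$ on their common interval. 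If $T_0>N(t)(1-t)$, this would extend $v^{(t)}$ past its maximal interval, hence extend $u$ past $\sup I=1$, contradicting maximality of $I$. Therefore $N(t)(1-t)\ge T_0$ for every $t\in I$ with $t<1$; since $[0,1)\subset I$, this gives $N(t)\ge C/(1-t)$ on $(1-\epsilon,1)$ with $C=T_0$ and any $\epsilon\in(0,1)$.

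The admissibility check for $(d+1,d+1)$ and the scaling computations are routine. The one step requiring care is the uniform lifespan $T_0$: the local theory of Theorem \ref{l22} is driven by smallness of the free-evolution Strichartz norm rather than of the data, so one must pass from the (uniformly bounded, indeed precompact) family of data on $\overline K$ to a uniform small-Strichartz time by using the continuity of $\mathcal W$ on $\dot H^{s_c}_x\times\dot H^{s_c-1}_x$ together with dominated convergence. Beyond that, the argument is purely a matter of organizing the contradiction with the maximality of $I$, and I do not anticipate any genuinely difficult estimate.
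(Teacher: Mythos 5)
Your argument is correct and follows essentially the same route as the paper: compactness of the almost-periodic orbit plus Strichartz gives a uniform time on which the free evolution of the rescaled data is small, Theorem \ref{l22} and uniqueness then force the rescaled solution's forward lifespan $N(t)(1-t)$ to be at least that time. The paper merely organizes this as a contradiction along a sequence $t_n$ with $N(t_n)(1-t_n)\to 0$ rather than extracting a uniform $T_0$ first, which is an immaterial difference.
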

\begin{proof}
Suppose for contradiction that the claim failed, and let us choose a sequence $t_n\rightarrow 1$ such that for all $n\in\mathbb{N}$, $N(t_n)(1-t_n)<\frac{1}{n}$.  For all $n\in\mathbb{N}$, we set
\begin{align*}
(v_{0,n},v_{1,n})=(\tfrac{1}{N(t_n)}u(t_n,x(t_n)+\tfrac{x}{N(t_n)}),\tfrac{1}{N(t_n)^2}u_t(t_n,x(t_n)+\tfrac{x}{N(t_n)}))
\end{align*}
and let $v_n$ denote the solution to (NLW) with Cauchy data $(v_{0,n},v_{1,n})$, with maximal interval of existence $I_n$.  Then for all $n\in\mathbb{N}$, the scaling and space translation symmetries imply that we have $\sup I_n=N(t_n)(1-t_n)$.

Note that since $u$ is almost periodic, we may choose $(f,g)\in \dot{H}_x^{s_c}\times \dot{H}_x^{s_c-1}$ such that $(v_{0,n},v_{1,n})\rightarrow (f,g)$ in $\dot{H}_x^{s_c}\times \dot{H}_x^{s_c-1}$ as $n\rightarrow\infty$.  

Let $\delta_0(d,\lVert (u,u_t)\rVert_{L_t^\infty(I;\dot{H}_x^{s_c}\times\dot{H}_x^{s_c-1})})>0$ as in Theorem $\ref{l22}$.  Then there exists an open interval $0\in J\subset\mathbb{R}$ small enough so that
\begin{align*}
\lVert \mathcal{W}(t)(f,g)\rVert_{L_{t,x}^{d+1}(J\times\mathbb{R}^d)}<\frac{\delta_0}{3}.
\end{align*}

On the other hand the Strichartz inequality gives
\begin{align*}
\lVert \mathcal{W}(t)(f,g)-\mathcal{W}(t)(v_{0,n},v_{1,n})\rVert_{L_{t,x}^{d+1}(J\times\mathbb{R}^d)}\rightarrow 0
\end{align*}
as $n\rightarrow\infty$, so that we may choose $N$ large enough such that for every $n\geq N$, $\lVert \mathcal{W}(t)(v_{0,n},v_{1,n})\rVert_{L_{t,x}^{d+1}(J\times\mathbb{R}^d)}\leq \frac{2\delta_0}{3}$.  Thus for all $n\geq N$, Theorem $\ref{l22}$ implies that $J\subset I_n$, and thus $\frac{1}{2}\sup J\in I_n$.  However, this contradicts the limit $\sup I_n\rightarrow 0$ as $n\rightarrow\infty$.  Thus, the desired claim holds.
\end{proof}

A second ingredient that is necessary to rule out the finite time blow-up solution is to control its support.
\begin{lemma}
\label{l49}
Let $u:I\times\mathbb{R}^d\rightarrow\mathbb{R}$ be an almost periodic solution to (NLW) with maximal interval of existence $I$, $\sup I=1$ and $(u,u_t)\in L_t^\infty(I;\dot{H}_x^{s_c}\times\dot{H}_x^{s_c-1})$.

Then there exists $y\in\mathbb{R}^d$ such that for each $0<s<1$, we have
\begin{align*}
\supp u(s,\cdot),\quad \supp u_t(s,\cdot)\subset \overline{B(y,1-s)}
\end{align*}
\end{lemma}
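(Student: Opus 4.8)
The plan is to use the finite speed of propagation to show that the blow‑up of $u$ at $t=1$ is concentrated at a single point $y$, and that both $u$ and $u_t$ vanish off the backward light cone with vertex $(1,y)$. I would begin by normalizing $\sup I=1$ and invoking Lemma~\ref{l48} to assume $N(t)\geq C/(1-t)$ for $t$ close to $1$, so that $1/N(t)\lesssim 1-t\to 0$. Fix $\epsilon>0$ and let $R=R(\epsilon)$ be the radius produced by Lemma~\ref{l37}; for $t$ near $1$ set $r_t:=2R/N(t)+(1-t)\leq C'(1-t)$. By ($\ref{l39}$), for every $y'$ with $|y'-x(t)|\geq r_t$ and every $s\in[t,1)$,
\[
u(s,y')=N(t)\,v_R^{(t)}\big((s-t)N(t),\,N(t)(y'-x(t))\big),
\]
together with the analogous identity for $u_t$ in terms of $\partial_\tau v_R^{(t)}$.

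Since $v_R^{(t)}\in C_\tau(\mathbb{R};\dot{H}_x^{s_c}\times\dot{H}_x^{s_c-1})$ is global, the right‑hand sides converge as $s\to1^-$ on $\{|y'-x(t)|\geq r_t\}$, so the pointwise limits $(u(1^-,\cdot),u_t(1^-,\cdot))$ of $(u(s,\cdot),u_t(s,\cdot))$ exist there; using the embeddings $\dot{H}_x^{s_c}\hookrightarrow L_x^{d}$, $\dot{H}_x^{s_c-1}\hookrightarrow L_x^{d/2}$, the scaling invariance, and the smallness ($\ref{l38}$), they satisfy
\[
\|u(1^-)\|_{L_x^d(|y'-x(t)|\geq r_t)}+\|u_t(1^-)\|_{L_x^{d/2}(|y'-x(t)|\geq r_t)}\lesssim\epsilon .
\]
Being intrinsic limits of $u$ and $u_t$, these patch over the admissible parameters $t$ into a pair defined on $\mathbb{R}^d\setminus K_\infty$, where $K_\infty$ is the intersection of the balls $\overline{B(x(t),r_t)}$ taken over $t$ near $1$; since $r_t\to 0$, the set $K_\infty$ is empty or a single point. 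If $K_\infty=\emptyset$ then $(u(1^-),u_t(1^-))$ is defined a.e.\ on $\mathbb{R}^d$ with $L_x^d\times L_x^{d/2}$ norm $\lesssim\epsilon$, so letting $\epsilon\to0$ gives $(u(1^-),u_t(1^-))=(0,0)$; the finite speed of propagation, read backwards from $t=1$, then forces $(u(s_0),u_t(s_0))=(0,0)$ for every $s_0<1$, whence by uniqueness $u$ is the zero solution, whose maximal interval is $\mathbb{R}$ — contradicting $\sup I=1$. Hence $K_\infty=\{y\}$ for some $y\in\mathbb{R}^d$, and in particular $|x(t)-y|\leq r_t\leq C'(1-t)\to 0$.

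It remains to identify the time‑$1$ data off $y$. For each $r>0$ one can choose $t$ near $1$ with $r_t<r/2$ and $|x(t)-y|<r/2$, so that $\{|y'-y|\geq r\}\subset\{|y'-x(t)|\geq r_t\}$ and therefore $\|u(1^-)\|_{L_x^d(|y'-y|\geq r)}+\|u_t(1^-)\|_{L_x^{d/2}(|y'-y|\geq r)}\lesssim\epsilon$; letting $\epsilon\to0$ and then $r\to0$ yields $(u(1^-),u_t(1^-))=(0,0)$ a.e.\ on $\mathbb{R}^d\setminus\{y\}$. Finally, given $0<s_0<1$ and $z$ with $|z-y|>1-s_0$, the domain of dependence $\overline{B(z,1-s_0)}$ of $(s_0,z)$ at time $1$ avoids $y$; on a neighbourhood of the cone of determinacy of $(s_0,z)$ the solution $u$ coincides, through the identity above with one suitable fixed $t$, with the rescaled global solution $v_R^{(t)}$, hence extends continuously to the slice $\{t=1\}$ with vanishing data there, and the finite speed of propagation gives $u(s_0,z)=u_t(s_0,z)=0$. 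This proves $\supp u(s_0,\cdot),\,\supp u_t(s_0,\cdot)\subset\overline{B(y,1-s_0)}$.

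The main obstacle I anticipate is this last step: to apply the finite speed of propagation at the blow‑up time $t=1$ one must know that, away from $y$, $u$ genuinely extends to a solution of (NLW) on a neighbourhood of $\{t=1\}$ lying in the uniqueness class of Theorem~\ref{l22}, and this is precisely what the representation $u=N(t)\,v_R^{(t)}(\cdot)$ together with the globality of $v_R^{(t)}$ supplies — indeed it is the reason Lemma~\ref{l37} is phrased as it is. A secondary, more routine difficulty is the bookkeeping needed to patch the time‑$1$ limits obtained from different parameters $t$ and to organize the dichotomy for $K_\infty$; there the uniform smallness ($\ref{l38}$) and the scaling invariance of the $\dot{H}_x^{s_c}\times\dot{H}_x^{s_c-1}$ norm do the work.
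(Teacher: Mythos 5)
Your proposal rests on the same two pillars as the paper's argument---the truncated, globally small solutions $v_R^{(t)}$ of Lemma~\ref{l37} and the finite speed of propagation identity (\ref{l39})---but it is organized quite differently: you push forward in time to a trace $(u(1^-),u_t(1^-))$ at the blow-up time and then propagate backward from $t=1$, whereas the paper never constructs a trace. For each fixed $s<1$ the paper evolves backward from times $t$ close to $1$ and reads off the smallness of $\nabla u(s)$, $u_t(s)$ directly on $\{|x-x(t)|\geq 2R/N(t)+t-s\}$ (estimate (\ref{l50})), then shows $x(t)$ stays bounded, extracts $x(t_n)\to y$, and shrinks the exceptional ball to $\overline{B(y,1-s)}$ by letting $\eta\to0$ and $\epsilon\to0$. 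Your detour through $t=1$ is workable in principle, but it creates two difficulties that the direct route avoids.

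First, the dichotomy on $K_\infty=\bigcap_t\overline{B(x(t),r_t)}$ does not do what you need. It is true that $K_\infty$ contains at most one point, but $K_\infty$ can be empty even when $x(t)$ converges (for instance if $|x(t)-y|=2r_t$ for all $t$ near $1$), and in that situation your $K_\infty=\emptyset$ branch asserts a conclusion that does not follow: you cannot propagate the vanishing trace backward through the concentration point, because the representation $u=N(t)\,v_R^{(t)}(\cdots)$---which is the only thing giving you an extension of $u$ across $\{t=1\}$---is valid at $(s_0,z)$ only when $|z-x(t)|\geq 2R/N(t)+(t-s_0)$, i.e.\ only outside the backward cone of the concentration point. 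So the claimed contradiction in the empty case is not available; the correct dichotomy is the paper's: either $x(t)$ is unbounded as $t\to1$, in which case the exterior smallness at a \emph{fixed} time eventually covers every compact set and forces $u\equiv0$ (this is the proof of (\ref{l51})), or $x(t)$ is bounded and one extracts a convergent subsequence $x(t_n)\to y$. A related point is that your global $\lesssim\epsilon$ bound on the trace in the empty case patches together non-nested sets whose radius $r_t$ depends on $\epsilon$ through $R=R(\epsilon)$; this needs an argument (e.g.\ Fatou along a sequence $t_n\to1$), not just an assertion. Second, a smaller but real slip: vanishing of the time-$1$ data on $\overline{B(z,1-s_0)}$ propagates backward only to the single point $\{z\}$ at time $s_0$, which is vacuous for an $L^d_x$ function; you must take a slightly larger ball $\overline{B(z,1-s_0+\delta)}$ (possible since $|z-y|>1-s_0$ strictly) to land on a set of positive measure. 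Both defects are repairable, but the paper's scheme reaches the conclusion without ever needing a trace at the blow-up time.
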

\begin{proof}
We argue as in \cite{KenigMerleNLW,KenigMerleSupercritical}.  Fix $\epsilon>0$ and $0<s<1$.  Let $R,v^{(t)}_0,v^{(t)}_1,v^{(t)}_R$ be as stated in Lemma $\ref{l37}$.  

We first show 
\begin{align}
\limsup_{t\rightarrow 1} \int_{|x-x(t)|\geq \frac{2R}{N(t)}+t-s} |\nabla u(s,x)|^\frac{d}{2}+|u_t(s,x)|^\frac{d}{2}dx\leq C\epsilon.
\label{l50}
\end{align}
Indeed, for $t\in I$,
\begin{align*}
&\int_{|x-x(t)|\geq \frac{2R}{N(t)}+t-s} |(\nabla u)(s,x)|^\frac{d}{2}dx\\
&\hspace{0.2in}=\int_{|x|\geq 2R+(t-s)N(t)} \left|(\nabla u)\left(s,x(t)+\tfrac{x}{N(t)}\right)\right|^\frac{d}{2}\tfrac{1}{N(t)^{d}}dx\\
&\hspace{0.2in}\leq \int_{\mathbb{R}^d} |\nabla v^{(t)}_{R}((s-t)N(t),x)|^\frac{d}{2}dx\\
&\hspace{0.2in}\lesssim \lVert v^{(t)}_R((s-t)N(t),x)\rVert_{\dot{H}_x^{s_c}}^\frac{d}{2}\\
&\hspace{0.2in}\lesssim \epsilon
\end{align*}
where to obtain the last two inequalities, we used Sobolev's inequality combined with Lemma $\ref{l37}$.  A similar argument also shows the corresponding inequality with $\nabla u(s,x)$ replaced by $u_s(s,x)$.  As $t\in I$ is arbitrary, this proves the desired inequality ($\ref{l50}$).

We next show that there exists $\epsilon'>0$ and $A>0$ such that for all $1-\epsilon'<t<1$, we have
\begin{align}
|x(t)|<A.\label{l51}
\end{align}
To see this, suppose for a contradiction that the claim failed.  Then there exists a sequence of times $\{t_n\}$ such that $t_n\in (1-\frac{1}{n},1)$ and $|x(t_n)|>n$ for all $n\in\mathbb{N}$.  Then given $M>0$, $|x|<M$ implies $|x-x(t_n)|\geq n-M$.  Moreover, by Lemma $\ref{l48}$, $N(t_n)\rightarrow \infty$ as $t_n\rightarrow 1$ which yields $\tfrac{2R}{N(t_n)}\rightarrow0$ as $n\rightarrow\infty$, so that for $n$ large enough, $\tfrac{2R}{N(t_n)}\leq 1$.  Noting that for all $n\in\mathbb{N}$, $t_n\leq 1$, we deduce that for $n$ large enough,
\begin{align*}
\{x:|x|<M\}\subset \{x:|x-x(t_n)|\geq \tfrac{2R}{N(t_n)}+t_n\}.
\end{align*}
Using this embedding to expand the domain of integration in ($\ref{l50}$), we obtain 
\begin{align*}
\int_{|x|<M} |\nabla u(0,x)|^\frac{d}{2}+|u_t(0,x)|^\frac{d}{2}dx&\leq 2C\epsilon.
\end{align*}
Letting $\epsilon\rightarrow 0$ followed by $M\rightarrow\infty$, we derive $\int_{\mathbb{R}^d} |\nabla u(0,x)|^\frac{d}{2}+|u_t(0,x)|^\frac{d}{2}dx=0$, and hence $u\equiv 0$.  This contradicts the fact that $u$ is a blow-up solution, and thus the desired claim ($\ref{l51}$) holds.

With the bound ($\ref{l51}$) in hand, we are now ready to conclude the proof of the lemma.  Let us choose a time sequence $t_n\in (1-\epsilon',1)$ such that $t_n\rightarrow 1$ as $n\rightarrow\infty$.  Then by ($\ref{l51}$), $|x(t_n)|<A$ for all $n$, so that we may choose a subsequence (still labeled $t_n$) such that $x(t_n)\rightarrow y$ as $n\rightarrow\infty$.

We now claim that for $\eta>0$ fixed and for $n$ large enough (depending on $\eta$),
\begin{align}
\{x:|x-y|\geq 1-s+\eta\}\subset \{x:|x-x(t_n)|\geq \tfrac{2R}{N(t_n)}+t_n-s\}.
\label{l52}
\end{align}
To observe this inclusion, by the convergence of $x(t_n)$ let us choose $N_0\in\mathbb{N}$ such that for all $n>N_0$, $|x(t_n)-y|<\frac{\eta}{2}$.  Then for $n>N_0$ and $|x-y|\geq 1-s+\eta$, we have
\begin{align}
|x-x(t_n)|&\geq 1-s+\tfrac{\eta}{2}.
\label{l53}
\end{align}
Moreover, by Lemma $\ref{l48}$ $N(t_n)\rightarrow \infty$ as $t_n\rightarrow 1$, so that we may choose $N_1\in\mathbb{N}$ such that for all $n>N_1$, 
\begin{align}
\label{l54}
\tfrac{2R}{N(t_n)}<\tfrac{\eta}{2}.
\end{align}
Putting together $(\ref{l53})$ and $(\ref{l54})$ and recalling $t_n<1$, we obtain that for $n>\max\{N_0,N_1\}$,
\begin{align*}
|x-x(t_n)|&\geq \tfrac{2R}{N(t_n)}+t_n-s.
\end{align*}

Returning back to ($\ref{l50}$) and invoking ($\ref{l52}$) followed by letting $n\rightarrow\infty$, we get 
\begin{align}
\int_{|x-y|\geq 1-s+\eta} |\nabla u(s,x)|^\frac{d}{2}+|u_t(s,x)|^\frac{d}{2}dx\leq C\epsilon.\label{l55}
\end{align}
Letting $\eta\rightarrow 0$ and using the monotone convergence theorem together with $\epsilon\rightarrow 0$, we deduce
\begin{align*}
\int_{|x-y|\geq 1-s} |\nabla u(s,x)|^\frac{d}{2}+|u_t(s,x)|^\frac{d}{2}dx=0.
\end{align*}
This immediately implies $\supp u_t(s)\subset \overline{B(y,1-s)}$.

To conclude, we note that $(\ref{l55})$ also implies that $u(s)$ is constant on $\{|x-y|> 1-s\}$.  Then $u\in L_t^\infty \dot{H}_x^{s_c}$ gives $u\in L_t^\infty L_x^{d}$ via the Sobolev embedding.  This in turn forces $u=0$ on $\{|x-y|> 1-s\}$, and thus $\supp u\subset \overline{B(y,1-s)}$ as desired.
\end{proof}

Arguing as in \cite{KenigMerleSupercritical}, we can now rule out the finite time blow-up solution:
\begin{proposition}
There is no solution $u:I\times \mathbb{R}^d\rightarrow\mathbb{R}$ to (NLW) with maximal interval of existence $I$ satisfying the properties of a finite time blow-up solution in the sense of Theorem \ref{l16}.
\end{proposition}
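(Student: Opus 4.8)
The plan is to argue by contradiction: I would show that such a solution must be identically zero, which contradicts the defining property $\lVert u\rVert_{L_{t,x}^{d+1}(I\times\mathbb{R}^d)}=\infty$ of a blow-up solution. Following the normalization of this section, take $\sup I=1$ (the case $\inf I>-\infty$ is handled by time reversal). Since the solution $u$ supplied by Theorem \ref{l16} is almost periodic modulo symmetries with $(u,u_t)\in L_t^\infty(I;\dot H_x^{s_c}\times\dot H_x^{s_c-1})$, Lemma \ref{l49} applies and produces $y\in\mathbb{R}^d$ with $\supp u(s,\cdot),\ \supp u_t(s,\cdot)\subset\overline{B(y,1-s)}$ for every $s\in(0,1)$. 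This shrinking-support property is the engine of the whole argument.

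\textbf{Step 1 (the energy decays as $s\to1$).} By the Sobolev embeddings $\dot H_x^{s_c}\hookrightarrow L_x^d$ and $\dot H_x^{s_c-1}\hookrightarrow L_x^{d/2}$ one has $u(s)\in L_x^d$ and $\nabla u(s),u_t(s)\in L_x^{d/2}$, with norms bounded by $\lVert(u,u_t)\rVert_{L_t^\infty(I;\dot H_x^{s_c}\times\dot H_x^{s_c-1})}$. Since $d\ge6$ we have $2\le d/2$ and $4\le d$, so H\"older's inequality on the ball $B(y,1-s)$ (with H\"older factors $|B(y,1-s)|^{(d-4)/(2d)}$ for the $\nabla u$ and $u_t$ terms and $|B(y,1-s)|^{(d-4)/(4d)}$ for the $u$ term) gives $\lVert\nabla u(s)\rVert_{L_x^2}^2+\lVert u_t(s)\rVert_{L_x^2}^2\lesssim(1-s)^{d-4}$ and $\lVert u(s)\rVert_{L_x^4}^4\lesssim(1-s)^{d-4}$, with implicit constants depending only on $d$ and the a priori bound. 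Hence $E(u(s),u_t(s))$ is finite for every $s\in(0,1)$ and
\[
E(u(s),u_t(s))\ \lesssim\ (1-s)^{d-4}\ \longrightarrow\ 0\qquad\text{as }s\to1^-,
\]
using $d-4\ge2>0$.

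\textbf{Step 2 (the energy is conserved).} Because $u(s)$ is compactly supported with $u(s)\in L_x^d$ and $|\nabla|^{s_c}u(s)\in L_x^2$, it actually lies in $H_x^{s_c}$, and likewise $u_t(s)\in H_x^{s_c-1}$, with $s_c\ge2$. The equation then writes $u_{tt}=\Delta u-|u|^2u$, which lies in $L_x^2$ on the bounded support of $u(s)$: indeed $H_x^{s_c}\hookrightarrow L_x^d$ and $d\ge6$ force $|u|^2u\in L_x^{d/3}\subset L_x^2(B(y,1-s))$, while $\Delta u\in H_x^{s_c-2}\subset L_x^2$. This regularity, together with the compact support, legitimizes the identity $\tfrac{d}{ds}E(u(s),u_t(s))=\int_{\mathbb{R}^d}u_t\,(u_{tt}-\Delta u+|u|^2u)\,dx=0$; a routine mollification makes the integration by parts rigorous, or alternatively one approximates the data at an interior time, invokes Theorem \ref{l22} and the stability theorem, uses classical energy conservation for the smooth approximants, and passes to the limit with the help of the uniform compact support and Step 1's H\"older bounds. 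Thus $E(u(\cdot),u_t(\cdot))$ is constant on $(0,1)$, hence by Step 1 it is identically $0$. Since the nonlinearity is defocusing, all three integrands in $E$ are nonnegative, so $E\equiv0$ forces $u(s)\equiv0$ (already from the $L_x^4$ term) and $u_t(s)\equiv0$ for every $s\in(0,1)$; thus $u\equiv0$ on $(0,1)\times\mathbb{R}^d$, whence $\lVert u\rVert_{L_{t,x}^{d+1}(I\times\mathbb{R}^d)}=0$, contradicting that $u$ is a blow-up solution.

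\textbf{Main difficulty.} The only genuinely delicate point is Step 2: a priori the solution is controlled only in the \emph{homogeneous} critical space $\dot H_x^{s_c}\times\dot H_x^{s_c-1}$, which contains no information about low frequencies and is not contained in the energy space, so it is not even clear that $u$ is an energy-class solution, let alone that its energy is conserved. It is precisely the compact support produced by Lemma \ref{l49} that upgrades the homogeneous critical regularity to honest $H_x^{s_c}$ regularity (with $s_c\ge2$) and thereby makes the energy identity available; the restriction $d\ge6$ enters both here (so that $|u|^2u\in L_x^2$ locally) and in Step 1 (so that $d-4>0$ yields actual decay).
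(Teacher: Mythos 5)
Your proposal is correct and follows essentially the same route as the paper: invoke Lemma \ref{l49} for the shrinking support, combine H\"older's inequality on $\overline{B(y,1-s)}$ with the Sobolev embeddings $\dot H_x^{s_c}\hookrightarrow L_x^d$, $\dot H_x^{s_c-1}\hookrightarrow L_x^{d/2}$ to get $E(u(s),u_t(s))\lesssim(1-s)^{d-4}\to0$, and conclude via energy conservation that $u\equiv0$. Your Step 2 supplies a justification of the energy identity (upgrading $\dot H_x^{s_c}$ to $H_x^{s_c}$ via the compact support) that the paper takes for granted, which is a welcome addition rather than a deviation.
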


\begin{proof}
Let us suppose for a contradiction that there is such a solution $u$.  By the time-reversal and scaling symmetries we may assume that $\sup I=1$.  Using Lemma $\ref{l49}$ and the space-translation symmetry, we may further assume that $\supp u(t),\supp u_t(t)\subset \overline{B(0,1-t)}$.  Then for all $t\in (0,1)$, we have
\begin{align*}
E(u(t),u_t(t))&=\int_{|x|\leq 1-t} \tfrac{1}{2}|\nabla u(t)|^2+\tfrac{1}{2}|u_t(t)|^2+\tfrac{1}{4}|u(t)|^{4}dx\\
&\lesssim (1-t)^{d-4}[\lVert \nabla u(t)\rVert_{L_x^\frac{d}{2}(\mathbb{R}^d)}^2+\lVert u_t(t)\rVert_{L_x^\frac{d}{2}(\mathbb{R}^d)}^2+\lVert u(t)\rVert_{L_x^{d}(\mathbb{R}^d)}^{4}]\\
&\lesssim (1-t)^{d-4}[\lVert u(t)\rVert_{\dot{H}_x^{s_c}}^2+\lVert u_t(t)\rVert_{\dot{H}_x^{s_c-1}}^2+\lVert u(t)\rVert_{\dot{H}_x^{s_c}}^4]\\
&\lesssim (1-t)^{d-4}
\end{align*}
where we have used the fact that $u\in L_t^\infty(I;\dot{H}_x^{s_c}\times \dot{H}_x^{s_c-1})$.

Letting $t\nearrow 1$ and using the conservation of energy,
\begin{align*}
E(u(0),u_t(0))=\lim_{t\rightarrow 1}E(u(t),u_t(t))=0.
\end{align*}
This implies $u\equiv 0$ which contradicts the assumption that $u$ is a finite time blow-up solution.  Thus such a solution cannot exist. 
\end{proof} 

\section{Additional decay}

In this section, we prove that the soliton-like and frequency cascade solutions identified in Theorem $\ref{l15}$ satisfy an additional decay property.  More precisely, for $d\geq 6$ we show that $(u,u_t)\in L_t^\infty(\mathbb{R};\dot{H}_x^{1-\epsilon}\times \dot{H}_x^{-\epsilon})$ for some $\epsilon=\epsilon(d)>0$.  In particular, we obtain that such solutions belong to $L_t^\infty(\dot{H}_x^1\times L_x^2)$.  Our approach follows that of Killip and Visan in \cite{KillipVisanECritical,KillipVisanSupercriticalNLS,KillipVisanSupercriticalNLW3D}.  

The main result of this section is the following:
\begin{theorem}
\label{l56}
Assume $d\geq 6$ and that $u:\mathbb{R}\times\mathbb{R}^d\rightarrow\mathbb{R}$ is an almost periodic solution to (NLW) with $(u,u_t)\in L_t^\infty(\mathbb{R};\dot{H}_x^{s_c}\times \dot{H}_x^{s_c-1})$ and 
\begin{align*}
\inf_{t\in I} N(t)\geq 1.
\end{align*}

Then we have 
\begin{align}\label{l57}
(u,u_t)\in L_t^\infty(\mathbb{R};\dot{H}_x^{1-\epsilon}\times \dot{H}_x^{-\epsilon}) 
\end{align}
for some $\epsilon=\epsilon(d)>0$.  In particular, $(u,u_t)\in L_t^\infty(\mathbb{R};\dot{H}_x^1\times L_x^2)$.
\end{theorem}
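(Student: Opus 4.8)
The plan is to follow the three-step scheme of Killip and Visan \cite{KillipVisanECritical,KillipVisanSupercriticalNLS,KillipVisanSupercriticalNLW3D}: first improve the trivial bound $u\in L^\infty_tL^d_x$ (which comes from $u\in L^\infty_t\dot H^{s_c}_x$ and Sobolev embedding) to $u\in L^\infty_tL^p_x$ for a suitable $p<d$ (Lemma \ref{l58}); then use that bound, via a double Duhamel argument, to trade a fixed amount of regularity, passing from $(u,u_t)\in L^\infty_t(\dot H^s_x\times\dot H^{s-1}_x)$ to $(u,u_t)\in L^\infty_t(\dot H^{s-s_0}_x\times\dot H^{s-1-s_0}_x)$ for some $s_0=s_0(d)>0$ (Lemma \ref{l60}); and finally iterate the second step, starting from the a priori bound at regularity $s_c$, until the regularity drops below $1$. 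The hypothesis $\inf_t N(t)\ge1$, combined with the frequency-localized form of almost periodicity in Remark \ref{l13} (in particular \eqref{l14}), is used throughout to dispose of high frequencies: for any $\sigma\le s_c$ one has $\||\nabla|^{\sigma}P_{>1}f\|_{L^2_x}\le\|f\|_{\dot H^{s_c}_x}$, so the high-frequency part of $u$ and $u_t$ is always controlled by the a priori bound, while \eqref{l14} shows that what sits below frequency $1$ carries only a small fraction of the critical norm. Thus at each stage the entire difficulty is the \emph{low}-frequency part, where the a priori bound gives nothing and one must instead exploit the ``in-from-infinity'' Duhamel representation of Lemma \ref{l17}.

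For Lemma \ref{l58} I would split $u=P_{\le N}u+P_{>N}u$. The high piece is handled by Bernstein and the a priori bound, $\|P_{>N}u(t)\|_{L^p_x}\lesssim N^{-(d/p-1)}\|u(t)\|_{\dot H^{s_c}_x}$, which is finite (and in fact tends to $0$ as $N\to\infty$) precisely because $p<d$. For the low piece I would use Lemma \ref{l17} to write, after the requisite justification of the weak limit, $P_{\le N}u(t)$ as $\int_t^{\sup I}\tfrac{\sin((t-t')|\nabla|)}{|\nabla|}P_{\le N}F(u(t'))\,dt'$, decompose $\tfrac{\sin(s|\nabla|)}{|\nabla|}=\tfrac1{2i}(\tfrac{e^{is|\nabla|}}{|\nabla|}-\tfrac{e^{-is|\nabla|}}{|\nabla|})$, and apply the dispersive estimate of Proposition \ref{l4}/\eqref{l5} to $\tfrac{e^{\pm i(t-t')|\nabla|}}{|\nabla|}P_{\le N}F(u(t'))$, bounding $F(u)=|u|^2u$ by H\"older so as to pair the a priori factor $\|u\|_{L^d_x}^2$ against the quantity $\|u\|_{L^p_x}$ being controlled. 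The time decay $|t-t'|^{-\frac{d-1}{2}(1-2/p)}$ produced by the dispersive estimate is integrable on $\{|t-t'|\ge1\}$ exactly when $p>\tfrac{2(d-1)}{d-3}$ (the short-time part $|t-t'|\lesssim1$ being absorbed by the frequency cutoff together with the Strichartz bounds of Theorem \ref{l22}), and a bootstrap/continuity argument then yields $\sup_t\|u(t)\|_{L^p_x}<\infty$ for any such $p$.

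For Lemma \ref{l60} I would again discard high frequencies by the a priori bound (legitimate since $s-s_0<s_c$) and, for the low part, compute
\begin{align*}
\big\||\nabla|^{s-s_0}P_{\le1}u(t)\big\|_{L^2_x}^2=-\int_t^{\sup I}\int_{\inf I}^{t}\Big\langle |\nabla|^{2(s-s_0)}P_{\le1}\,\tfrac{\sin((t-t')|\nabla|)}{|\nabla|}F(u(t')),\ \tfrac{\sin((t-t'')|\nabla|)}{|\nabla|}F(u(t''))\Big\rangle\,dt''\,dt',
\end{align*}
using the forward representation of Lemma \ref{l17} for one factor and the backward one for the other. Distributing the derivatives and one of the propagators onto the $F(u(t''))$ side, applying the dispersive estimate \eqref{l5} there, and bounding the remaining $F(u(t'))$ in $L^{p'}_x$ via H\"older (pairing $\|u\|^2_{L^d_x}$ against $\|u\|_{L^p_x}$, both uniformly bounded), one reduces to a double time integral whose kernel carries the dispersive decay. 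This integral converges --- and, after optimizing, leaves room for a genuine positive gain $s_0$ --- precisely when $p<d-1$; this is where the double Duhamel technique forces the upper restriction on $p$, and also where the restriction to $d\ge6$ originates, since the window $\tfrac{2(d-1)}{d-3}<p<d-1$ is nonempty if and only if $d>5$.

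With these two lemmas in hand, Theorem \ref{l56} follows quickly: fix $p$ with $\tfrac{2(d-1)}{d-3}<p<d-1$; Lemma \ref{l58} gives $u\in L^\infty_tL^p_x$; and finitely many applications of Lemma \ref{l60}, beginning from $(u,u_t)\in L^\infty_t(\dot H^{s_c}_x\times\dot H^{s_c-1}_x)$, produce $(u,u_t)\in L^\infty_t(\dot H^{1-\epsilon}_x\times\dot H^{-\epsilon}_x)$ for some $\epsilon=\epsilon(d)>0$. Interpolating this with the a priori bound (using $1-\epsilon<1<s_c$ and $-\epsilon<0<s_c-1$) gives $(u,u_t)\in L^\infty_t(\dot H^1_x\times L^2_x)$, and the resulting control in $\dot H^1_x\cap\dot H^{s_c}_x\hookrightarrow L^4_x$ (valid for $d\ge4$) shows the energy $E(u(t),u_t(t))$ is finite. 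I expect the double Duhamel step, Lemma \ref{l60}, to be the main obstacle: one has to make rigorous sense of the weak representations of Lemma \ref{l17} inside a double time integral, and the delicate point is to arrange that the dispersive decay $|t'-t''|^{-\frac{d-1}{2}(1-2/p)}$ is strong enough both to render the double time integration convergent and to accommodate the loss of two derivatives coming from the two copies of $|\nabla|^{-1}$, while still producing a strictly positive regularity gain --- a balance that, together with the lower bound on $p$ demanded by Lemma \ref{l58}, is available only when $d\ge6$.
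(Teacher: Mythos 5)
Your top-level architecture is exactly the paper's: Lemma \ref{l58} upgrades the trivial $L^\infty_tL^d_x$ bound to $L^\infty_tL^{q_0}_x$ for suitable $q_0<d$, Lemma \ref{l60} uses the double Duhamel pairing of Lemma \ref{l17} to gain a fixed amount $s_0>0$ of regularity, and Theorem \ref{l56} follows by finitely many iterations starting from $s_c$. Your identification of the window $\tfrac{2(d-1)}{d-3}<p<d-1$ as the origin of the restriction $d\geq6$ matches the paper's discussion (the paper additionally imposes $q_1>4$ in Lemma \ref{l60}, which is in fact the binding lower constraint for $d\geq5$, but this does not change the conclusion), and the iteration step itself is carried out correctly.

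The gaps are in your sketches of the two lemmas, and they are concrete. First, the H\"older/dispersive pairing you propose --- put $F(u)=|u|^2u$ in $L^{p'}_x$ as $\|u\|_{L^d_x}^2\|u\|_{L^p_x}$ and map back to $L^p_x$ by \eqref{l5} --- only closes at the single exponent determined by $1-\tfrac1p=\tfrac2d+\tfrac1p$, i.e.\ $p=\tfrac{2d}{d-2}$, which lies \emph{below} the threshold $\tfrac{2(d-1)}{d-3}$ needed for the long-time integral to converge; for $p$ in the admissible window the exponents do not match, and the derivative weight $|\nabla|^{\frac{d-1}{2}-\frac{d+1}{p}}$ in the dispersive estimate is unaccounted for. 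The paper resolves this by working with the frequency-localized quantities $\mathcal{S}(N)=N^{d/R-1}\lVert u_N\rVert_{L^\infty_tL^R_x}$, using Bernstein at fixed frequency to reconcile exponents and derivative weights, extracting a small constant $\eta$ from the almost-periodicity bound \eqref{l62} on $\lVert|\nabla|^{s_c}u_{\leq N_0}\rVert_{L^\infty_tL^2_x}$, and closing via the Gronwall recursion of Lemmas \ref{l63} and \ref{l107}; your ``bootstrap/continuity argument'' on $\sup_t\lVert u(t)\rVert_{L^p_x}$ has neither an a priori finite quantity to run on nor a small constant in front of the self-referential term. Second, in Lemma \ref{l60} you load all $2(s-s_0)$ derivatives onto one factor of the inner product; the paper instead splits symmetrically, placing $|\nabla|^{s-1}$ on each copy of $F(u)$ and distributing it onto a single factor of $u$ via the product rule (Lemma \ref{l19}), estimating each side by $\lVert|\nabla|^su\rVert_{L^2_x}\lVert u\rVert_{L^{q_1}_x}^2$ at the exponent $r=\tfrac{2q_1}{q_1+4}$. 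This symmetric splitting is what keeps the derivative count per factor within reach of the hypothesis $|\nabla|^su\in L^\infty_tL^2_x$; with your asymmetric splitting one would need roughly $2s-2$ derivatives on a single copy of $F(u)$, which the hypotheses do not support.
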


Arguing as in \cite{KillipVisanECritical,KillipVisanSupercriticalNLS,KillipVisanSupercriticalNLW3D}, we obtain Theorem $\ref{l56}$ in two steps.  The first step is to prove that the solution $u$ belongs to $L_t^\infty L_x^{q_0}$ for all $q_0\in (\frac{2(d-1)}{d-3},d]$.  The second step is to perform a double Duhamel technique \cite{CKSTT,Tao} to improve this decay to $(u,u_t)\in L_t^\infty(\dot{H}_x^{s_c-s_0}\times \dot{H}_x^{s_c-1-s_0})$ for some $s_0=s_0(d,q_0)>0$.  Iterating the second step finitely many times, we obtain Theorem $\ref{l56}$.  

More precisely, Theorem $\ref{l56}$ will follow once we establish the following two lemmas:
\begin{lemma}
\label{l58}
Suppose $d\geq 6$ and that $u:\mathbb{R}\times\mathbb{R}^d\rightarrow\mathbb{R}$ is an almost periodic solution to (NLW) with $(u,u_t)\in L_t^{\infty}(\mathbb{R};\dot{H}_x^{s_c}\times \dot{H}_x^{s_c-1})$ and 
\begin{align}
\label{l59}\inf_{t\in I} N(t)\geq 1. 
\end{align}

Then for every $q_0\in (\frac{2(d-1)}{d-3},d]$ we have $u\in L_t^\infty L_x^{q_0}$.
\end{lemma}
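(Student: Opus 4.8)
The plan is to split $u=P_{>1}u+P_{\leq1}u$ and treat the two pieces differently. For the high-frequency piece, Bernstein's inequality together with the a priori bound $u\in L_t^\infty\dot H_x^{s_c}$ gives, for each dyadic $N\geq1$,
\[
\|P_Nu(t)\|_{L_x^{q_0}}\lesssim N^{\frac d2-\frac d{q_0}}\|P_Nu(t)\|_{L_x^2}\lesssim N^{1-\frac d{q_0}}\|u(t)\|_{\dot H_x^{s_c}},
\]
using $s_c=\tfrac{d-2}2$; since $q_0\leq d$ the exponent is nonpositive, and when $q_0<d$ the sum over $N\geq1$ converges, so $\|P_{>1}u\|_{L_t^\infty L_x^{q_0}}$ is bounded in terms of $\|u\|_{L_t^\infty\dot H_x^{s_c}}$ alone (for $q_0=d$ one uses the Sobolev embedding $\dot H_x^{s_c}\hookrightarrow L_x^d$ directly, and there $P_{\leq1}u=u-P_{>1}u\in L_x^d$ trivially). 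It therefore remains to control $P_{\leq1}u$ in $L_t^\infty L_x^{q_0}$ for $q_0$ in the stated range.

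For the low-frequency piece I would invoke the in/out Duhamel formula of Lemma \ref{l17}: since $I=\mathbb R$ and the linear evolution contributes nothing in the limit, $P_{\leq1}u(t)$ is the weak limit in $\dot H_x^{s_c}$, as $T\to\infty$, of $\int_t^{T}P_{\leq1}\tfrac{\sin((t-s)|\nabla|)}{|\nabla|}F(u(s))\,ds$; one proves a bound on the $L_x^{q_0}$ norm of this integral that is uniform in $T$ and then passes to the limit by lower semicontinuity of the norm under weak limits. Writing $\int_t^T=\int_{|s-t|\leq1}+\int_{1<|s-t|}$, the near-diagonal contribution is controlled by the elementary bound $\|\tfrac{\sin(\tau|\nabla|)}{|\nabla|}g\|_{L_x^2}\leq|\tau|\,\|g\|_{L_x^2}$ followed by Bernstein (legitimate since $q_0\geq2$ and the output has frequency support in $|\xi|\leq1$), reducing matters to an $L_t^\infty L_x^2$-type bound on a frequency-localized piece of $F(u)$. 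The far contribution is handled by the dispersive estimate \eqref{l5}: the weight $|\nabla|^{\frac{d-1}2-\frac{d+1}{q_0}}$ it produces is harmless at frequencies $\leq1$ (its exponent is nonnegative since $q_0>\tfrac{2(d-1)}{d-3}>\tfrac{2(d+1)}{d-1}$), so one obtains
\[
\Big\|P_{\leq1}\!\int_{1<|s-t|}\!\tfrac{\sin((t-s)|\nabla|)}{|\nabla|}F(u(s))\,ds\Big\|_{L_x^{q_0}}\lesssim\Big(\int_1^\infty r^{-\frac{d-1}2\left(1-\frac2{q_0}\right)}dr\Big)\sup_s\|P_{\leq1}F(u(s))\|_{L_x^{q_0'}},
\]
and the time integral converges exactly when $\tfrac{d-1}2\bigl(1-\tfrac2{q_0}\bigr)>1$, i.e. when $q_0>\tfrac{2(d-1)}{d-3}$ --- this is the origin of the lower endpoint in the statement.

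What remains is to bound the relevant dual-Lebesgue norms of $F(u)=|u|^2u$. Expanding $F$ along the splitting $u=P_{>1}u+P_{\leq1}u$, every monomial containing at least one high-frequency factor is estimated by H\"older's inequality, using the bounds of the first paragraph for the $P_{>1}u$ factors and Bernstein for the $P_{\leq1}u$ factors together with $u\in L_t^\infty L_x^d$ (here one exploits the freedom, afforded by the outer $P_{\leq1}$, to replace an $L_x^{q_0'}$ norm by an $L_x^{r}$ norm with $r\leq q_0'$). The delicate term is the purely low-frequency cube $(P_{\leq1}u)^3$, which is genuinely coupled back to the quantity being estimated; here one runs a bootstrap, improving the Lebesgue exponent in finitely many steps starting from $u\in L_t^\infty L_x^d$ and using at each step the integrability already obtained (and interpolation against $L_x^d$) to place $(P_{\leq1}u)^3$ in the space demanded by the dispersive and energy estimates. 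I expect the main obstacle to be closing this bootstrap --- arranging the exponent arithmetic so that each step strictly improves the integrability and so that the iteration reaches every $q_0\in(\tfrac{2(d-1)}{d-3},d]$; it is in balancing this bookkeeping against the constraint $q_0>\tfrac{2(d-1)}{d-3}$ above that the hypothesis $d\geq6$ is used. A secondary technical point is the justification of the weak-limit/lower-semicontinuity step and the verification that all intermediate quantities are finite, so that the bootstrap is not vacuous.
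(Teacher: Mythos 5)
Your overall architecture (Duhamel formula from Lemma \ref{l17}, splitting the time integral at $|s-t|\sim 1$, Bernstein on the near part, the dispersive estimate \eqref{l5} on the far part, with the lower endpoint $\frac{2(d-1)}{d-3}$ arising from convergence of $\int_1^\infty r^{-\frac{d-1}{2}(1-\frac{2}{q_0})}dr$) matches the paper's. But the proposal has a genuine gap exactly where you flag the ``delicate term'': the purely low-frequency cube cannot be closed by a bootstrap in the Lebesgue exponent. To place $(P_{\leq 1}u)^3$ in $L_x^{r}$ with $r\leq q_0'$ you need $u\in L_x^{3r}$ with $3r\leq 3q_0'=\frac{3q_0}{q_0-1}$, and $3q_0'\geq d$ forces $q_0\leq\frac{d}{d-3}\leq 2$, whereas the dispersive step forces $q_0>\frac{2(d-1)}{d-3}>2$. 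So at the first step of your iteration the only available input $u\in L_t^\infty L_x^d$ does not suffice for any $q_0$ in the admissible range, and the only exponents for which it would suffice are ones where the far-time integral diverges. The iteration therefore has no starting point; interpolating $L_x^{3q_0'}$ between the unknown $L_x^{q_0}$ norm and $L_x^d$ merely produces a self-referential inequality with no small constant to close it.

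The paper escapes this in two ways that your cut at frequency $1$ cannot reproduce. First, it estimates each Littlewood--Paley piece $u_N$ separately rather than $P_{\leq 1}u$ as a whole: the dangerous purely low-frequency cube then appears as $P_N\big[(u_{\leq N/8})^3\big]$, which vanishes identically by Fourier support (term $(I)$ in Lemma \ref{l63}), so the coupling you are trying to bootstrap is simply absent. Second, the remaining cross terms produce a recursion for $\mathcal{S}(N)=N^{\frac{d}{R}-1}\|u_N\|_{L_t^\infty L_x^R}$ in the dyadic frequency, whose off-diagonal coefficients carry the factor $\eta=\||\nabla|^{s_c}u_{\leq N_0}\|_{L_t^\infty L_x^2}$; this is small precisely because of almost periodicity together with the hypothesis $\inf_t N(t)\geq 1$ --- a hypothesis your argument never uses, and without which no version of this lemma holds. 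The Gronwall-type Lemma \ref{l107} then yields the quantitative decay $\|u_N\|_{L_t^\infty L_x^R}\lesssim N^{\frac{d}{2}-\frac{d}{R}-1}$, whose positive exponent makes the sum over small $N$ converge; the full range $q_0\in(\frac{2(d-1)}{d-3},d]$ is then recovered by interpolation with $L_t^\infty L_x^d$. If you want to repair your write-up, the essential missing ideas are the frequency-localized output (to kill the low-frequency self-interaction by support considerations) and the use of \eqref{l62} as the small parameter driving the induction.
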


\begin{lemma}
\label{l60}
Suppose $d\geq 6$ and that $u:\mathbb{R}\times\mathbb{R}^d\rightarrow\mathbb{R}$ is an almost periodic solution to (NLW) with $(u,u_t)\in L_t^\infty(\mathbb{R};\dot{H}_x^{s_c}\times\dot{H}_x^{s_c-1})$ and 
\begin{align*}
\inf_{t\in I} N(t)\geq 1.
\end{align*}  

Moreover, assume that there exists $4<q_1<d-1$ and $s\in [1,s_c]$ such that $u\in L_t^\infty L_x^{q_1}$ and $|\nabla|^s u\in L_t^\infty L_x^2$.  Then 
\begin{align}
\label{l61}(u,u_t)\in L_t^\infty(\mathbb{R}; \dot{H}_x^{s-s_0}\times \dot{H}_x^{s-1-s_0}).
\end{align}
for some $s_0=s_0(d,q_1)>0$.
\end{lemma}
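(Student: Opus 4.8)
The plan is to bound the low-frequency part of $u$ and $u_t$ via the double Duhamel technique of \cite{CKSTT,Tao}, in the form used in \cite{KillipVisanECritical,KillipVisanSupercriticalNLS,KillipVisanSupercriticalNLW3D}, the high frequencies being immediate from the hypotheses. First I would split $u=P_{\le1}u+P_{>1}u$ and $u_t=P_{\le1}u_t+P_{>1}u_t$. Since $s-s_0<s$ and $s-1-s_0<s_c-1$, for frequencies $\ge1$ one has $|\xi|^{s-s_0}\le|\xi|^{s}$ and $|\xi|^{s-1-s_0}\le|\xi|^{s_c-1}$, so
\begin{align*}
\||\nabla|^{s-s_0}P_{>1}u\|_{L_t^\infty L_x^2}\lesssim\||\nabla|^{s}u\|_{L_t^\infty L_x^2}<\infty,\qquad \||\nabla|^{s-1-s_0}P_{>1}u_t\|_{L_t^\infty L_x^2}\lesssim\||\nabla|^{s_c-1}u_t\|_{L_t^\infty L_x^2}<\infty,
\end{align*}
the first by hypothesis and the second by the a priori bound ($\ref{l3}$). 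It therefore suffices to produce $\theta=\theta(d,q_1)>0$ with $\sup_{t\in\mathbb{R}}(\|P_Nu(t)\|_{L_x^2}+\|P_Nu_t(t)\|_{L_x^2})\lesssim N^{\theta}$ for all dyadic $N\le1$; indeed this gives $\||\nabla|^{s-s_0}P_{\le1}u\|_{L_x^2}^2\sim\sum_{N\le1}N^{2(s-s_0)}\|P_Nu\|_{L_x^2}^2\lesssim\sum_{N\le1}N^{2(s-s_0)+2\theta}<\infty$, and similarly for $u_t$, as soon as $0<s_0<\theta$ (recall $s\ge1$), which is ($\ref{l61}$).

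Next, fix $t\in\mathbb{R}$ and dyadic $N\le1$. Since $I=\mathbb{R}$, Lemma \ref{l17} realizes $P_Nu(t)$ simultaneously as the weak limit of the truncated forward integral $\int_t^{T}\frac{\sin((t-t')|\nabla|)}{|\nabla|}P_NF(u(t'))\,dt'$ as $T\to+\infty$ and of $-\int_{T}^t\frac{\sin((t-t')|\nabla|)}{|\nabla|}P_NF(u(t'))\,dt'$ as $T\to-\infty$, with analogous statements for $P_Nu_t(t)$ using $\cos((t-t')|\nabla|)$. Pairing these two representations (testing each weak limit against the fixed vector coming from the other, then passing the limits through the resulting time integrals by dominated convergence using the pointwise bounds below, as in \cite{KillipVisanSupercriticalNLS,KillipVisanSupercriticalNLW3D}) gives
\begin{align*}
\|P_Nu(t)\|_{L_x^2}^2=-\int_{-\infty}^{t}\!\int_{t}^{\infty}\Big\langle\tfrac{\sin((t-t')|\nabla|)}{|\nabla|}P_NF(u(t')),\ \tfrac{\sin((t-t'')|\nabla|)}{|\nabla|}P_NF(u(t''))\Big\rangle\,dt''\,dt' ,
\end{align*}
and the corresponding identity for $\|P_Nu_t(t)\|_{L_x^2}^2$ with $\cos$ replacing $\sin/|\nabla|$.

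To estimate the integrand, write $a=t-t'>0$, $b=t''-t>0$ and apply the product-to-sum identity, so that the inner product becomes a combination of terms $\big\langle\frac{\cos((a\pm b)|\nabla|)}{|\nabla|^2}P_NF(u(t')),P_NF(u(t''))\big\rangle$. For the term with $\sigma=a+b$ I would use H\"older in $x$ with dual exponents $p,p'$ and the bound
\begin{align*}
\Big|\big\langle\tfrac{\cos(\sigma|\nabla|)}{|\nabla|^2}P_NF(u(t')),P_NF(u(t''))\big\rangle\Big|\lesssim\min\Big\{N^{d(1-\frac2p)-2},\ \sigma^{-\frac{d-1}{2}(1-\frac2p)}N^{\frac{d-3}{2}-\frac{d+1}{p}}\Big\}\,\|u(t')\|_{L_x^{3p'}}^3\|u(t'')\|_{L_x^{3p'}}^3 ,
\end{align*}
where the first term in the minimum uses the $L_x^2$-operator bound $\|\frac{\cos(\sigma|\nabla|)}{|\nabla|^2}P_N\|_{L_x^2\to L_x^2}\lesssim N^{-2}$ together with Bernstein, the second uses the dispersive estimate of Proposition \ref{l4} (the $\cos/|\nabla|^2$ form) together with Bernstein, and the two cross over at $\sigma\sim N^{-1}$. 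Here $\|u(t)\|_{L_x^{3p'}}$ is bounded uniformly in $t$ provided $3p'$ lies in the range of Lebesgue exponents obtained by interpolating the hypothesis $u\in L_t^\infty L_x^{q_1}$ with the embeddings $\dot{H}_x^{s_c}\hookrightarrow L_x^d$ and $\dot{H}_x^{s}\hookrightarrow L_x^{2d/(d-2s)}$ and with Lemma \ref{l58}. Integrating over $\{a,b>0\}$ (so that $\iint g(a+b)\,da\,db=\int_0^\infty\sigma\,g(\sigma)\,d\sigma$) and splitting at $\sigma\sim N^{-1}$ yields a positive power of $N$ precisely when the decay exponent $\tfrac{d-1}{2}(1-\tfrac2p)$ exceeds $2$, which forces $p$ above a $d$-dependent threshold, while keeping $3p'$ in the admissible Lebesgue range forces $p$ below another; the term with $a-b$, which can vanish, is handled by the $L_x^2$ bound on $\{|a-b|\lesssim N^{-1}\}$ and by the dispersive bound elsewhere. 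The identity for $u_t$ is handled identically, using the product-to-sum formula for $\cos(a|\nabla|)\cos(b|\nabla|)$ and the extra gain $|\nabla|^2P_N\sim N^2$. This produces $\sup_t(\|P_Nu(t)\|_{L_x^2}^2+\|P_Nu_t(t)\|_{L_x^2}^2)\lesssim N^{2\theta}$ with some $\theta=\theta(d,q_1)>0$, and the first step concludes the proof.

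The hard part will be the exponent bookkeeping in this last step: one must choose the H\"older and Bernstein exponents so that simultaneously every $L_x^\rho$-norm of $u$ that appears is finite from the available bounds, the dispersive decay exponent is large enough that the double time integral converges and contributes a positive power of $N$ (including near the diagonal $a\approx b$), and the net power of $N$ is strictly positive. Reconciling the lower bound on $p$ demanded by time-integrability with the upper bound on $p$ demanded by control of the cubic nonlinearity, together with the exponent range furnished by Lemma \ref{l58}, is exactly what enforces $d\ge6$ and the hypothesis $4<q_1<d-1$, as anticipated in Section 3.
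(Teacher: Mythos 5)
Your overall architecture --- high/low frequency splitting, double Duhamel via Lemma \ref{l17}, and the crossover between an $L_x^2$ operator bound and the dispersive estimate at times $\sim N^{-1}$ --- is the same as the paper's, but there is a genuine gap in how you set up the double Duhamel pairing. Estimating $\|P_Nu(t)\|_{L_x^2}^2$ on its own through $\sin((t-t')|\nabla|)\sin((t-t'')|\nabla|)$, the product-to-sum identity produces \emph{two} kernels, $\cos((t'-t'')|\nabla|)$ and $\cos((2t-t'-t'')|\nabla|)$. The first is harmless: on $\{t'>t>t''\}$ one has $|t'-t''|=|t'-t|+|t-t''|$, so the region where this argument is $\lesssim N^{-1}$ is a corner of measure $\sim N^{-2}$, and $\iint\min\{1,(N|t'-t''|)^{-\kappa}\}\,dt'\,dt''\lesssim N^{-2}$ once $\kappa>2$. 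The second is not: $2t-t'-t''$ vanishes along the entire antidiagonal $t'-t=t-t''$, so the strip $\{|2t-t'-t''|\lesssim N^{-1}\}$ has \emph{infinite} measure, and on it the only available bound (the $L_x^2$/Bernstein one) is constant in $(t',t'')$, while off it the dispersive bound decays only in $|2t-t'-t''|$, whose level sets in the quadrant are infinite half-lines. Both pieces of your proposed treatment of the ``$a-b$'' term therefore give divergent double time integrals. This is exactly why the paper estimates the \emph{sum} $\||\nabla|^su_N(t)\|_{L_x^2}^2+\||\nabla|^{s-1}\partial_tu_N(t)\|_{L_x^2}^2$: the cross terms then combine via $\sin A\sin B+\cos A\cos B=\cos(A-B)$, only the good kernel $\cos((t'-\tau')|\nabla|)$ survives, and the dangerous term never appears.

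A second, quantitative problem is the symmetric H\"older split $\|P_NF(u)\|_{L_x^{p'}}\lesssim\|u\|_{L_x^{3p'}}^3$. Convergence of the time integral forces $\tfrac{d-1}{2}(1-\tfrac2p)>2$, i.e.\ $p>\tfrac{2(d-1)}{d-5}$, hence $3p'<\tfrac{6(d-1)}{d+3}$; but at the first iteration (where $s=s_c$) every Lebesgue exponent you control is at least $\tfrac{2(d-1)}{d-3}$ (from Lemma \ref{l58}; the Sobolev embeddings give only $L_x^d$ there), and $\tfrac{6(d-1)}{d+3}>\tfrac{2(d-1)}{d-3}$ only when $d>6$, so your scheme has no admissible exponent at $d=6$. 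The paper's asymmetric split, $\||\nabla|^sF(u)\|_{L_x^r}\lesssim\||\nabla|^su\|_{L_x^2}\|u\|_{L_x^{q_1}}^2$ with $\tfrac1r=\tfrac12+\tfrac2{q_1}$ (Lemma \ref{l19}), gives time decay $\tfrac{2(d-1)}{q_1}$, which exceeds $2$ exactly under the hypothesis $q_1<d-1$. It also makes clear that the gain $N^{\frac{2d}{q_1}-2}$ attaches to $\||\nabla|^su_N\|_{L_x^2}$ rather than to $\|u_N\|_{L_x^2}$; the latter is in general \emph{not} $O(N^\theta)$ with $\theta>0$, so your stated intermediate goal is too strong, although the summation over $N\leq1$ closes just as well with the gain placed on the derivative norm. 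With these two corrections the argument runs as in the paper.
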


We will discuss the proofs of Lemma $\ref{l58}$ and Lemma $\ref{l60}$ in detail in the rest of this section; however, with these two lemmas in hand, we immediately complete the proof of the main theorem of this section.
\begin{proof}[Proof of Theorem \ref{l56}.]
We begin by choosing a suitable exponent to be able to apply Lemma $\ref{l58}$ and Lemma $\ref{l60}$.  To this end, we define 
\begin{align*}
q(d)&:=\tfrac{d^2-d-2}{2(d-3)}
\end{align*}
and note that $d\geq 6$ implies $q(d)\in (\frac{2(d-1)}{d-3},d)$ and $4<q(d)<d-1$.

Fix $s_0=s_0(d,q(d))$ as in Lemma $\ref{l60}$.  By induction, we now prove that for each $k\in\mathbb{N}$ with $s_c-(k-1)s_0\geq 1$, we have $(u,u_t)\in L_t^\infty(\mathbb{R};\dot{H}_x^{s_c-ks_0}\times\dot{H}_x^{s_c-1-ks_0})$.  We first note that for $k=0$ the result follows from the hypothesis $(u,u_t)\in L_t^\infty (\mathbb{R};\dot{H}_x^{s_c}\times\dot{H}_x^{s_c-1})$.  For the induction step, we assume that the result holds for some $k-1\in\mathbb{N}$ with $s_c-(k-2)s_0\geq 1$.  We then have $u\in L_t^\infty \dot{H}_x^{s_c-(k-1)s_0}$, so that if $k$ also satisfies $s_c-(k-1)s_0\geq 1$, then an immediate application of Lemma $\ref{l60}$ gives 
\begin{align*}
(u,u_t)\in L_t^\infty(\mathbb{R};\dot{H}_x^{s_c-ks_0}\times \dot{H}_x^{s_c-1-ks_0}) 
\end{align*}
establishing the induction step.

Note that taking $k\in\mathbb{N}$ as the largest integer such that $s_c-(k-1)s_0\geq 1$ we obtain the desired result ($\ref{l57}$) with $\epsilon=1-(s_c-ks_0)$.
\end{proof}

We now turn our attention to the proofs of Lemma $\ref{l58}$ and Lemma $\ref{l60}$.  The rest of this section is devoted to proving these two lemmas.  We start with,
\subsection{Proof of Lemma \ref{l58}.}\ \\
\hspace*{0.1in} Let $\eta>0$ be a small constant to be chosen later.  Assume $u$ is a solution to (NLW) as stated in Lemma $\ref{l58}$.  Then almost periodicity together with the condition ($\ref{l59}$) imply that we may find a dyadic number $N_0$ such that
\begin{align}
\lVert |\nabla|^{s_c}u_{\leq N_0}\rVert_{L_t^\infty L_x^2}\leq \eta.
\label{l62}
\end{align}

Let us now fix $R\in (\frac{2(d-1)}{d-3},\min\{\frac{2d}{d-4},\frac{3d}{d-1}\})$ and define
\begin{align*}
\mathcal{S}(N)=N^{\frac{d}{R}-1}\lVert u_N\rVert_{L_t^\infty L_x^R}
\end{align*}
for each dyadic number $N\in \{2^n:n\in\mathbb{Z}\}$.  

To prove Lemma $\ref{l58}$, it is enough to show $\lVert u_N\rVert_{L_t^\infty L_x^R}\lesssim N^\gamma$ for some $\gamma>0$ and $N$ sufficiently small depending on $u$, $d$ and $R$ (see the argument at the end of this section).  This bound will follow from the following decay estimate, which uses a Gronwall type inequality as stated in \cite{KillipVisanSupercriticalNLW3D}.
\begin{lemma}[Decay estimate]
\label{l63}
For all dyadic numbers $N\leq 8N_0$, we have
\begin{align}
\label{l64}
\nonumber \mathcal{S}(N)&\lesssim (\tfrac{N}{N_0})^{d-\frac{d}{R}-3}
+\eta\sum_{N_1=\frac{2N}{8}}^{N_0}\left[\left(\tfrac{N}{N_1}\right)^{d-\frac{d}{R}-3}\mathcal{S}(N_1)\right]\\
&\hspace{1.8in}+\eta\sum_{N_1\leq \frac{N}{8}} \left[\left(\tfrac{N_1}{N}\right)^{\frac{d}{R}-\frac{d}{2}+2}\mathcal{S}(N_1)\right].
\end{align}
In particular, 
\begin{align}
\label{l65}\mathcal{S}(N)&\lesssim N^{\frac{d-4}{2}}
\end{align}
for every $N\leq 8N_0$
\end{lemma}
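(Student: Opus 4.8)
The plan is to derive the recursive inequality (\ref{l64}) from the Duhamel representation of Lemma \ref{l17} together with the dispersive estimate of Proposition \ref{l4}, and then to extract the pointwise bound (\ref{l65}) from (\ref{l64}) by a Gronwall-type iteration. For (\ref{l64}), since $\inf_{t}N(t)\ge 1$ and the solution is defined on all of $\mathbb{R}$, Lemma \ref{l17} lets one represent $u_N(t)$ as the (weak-limit) Duhamel integral
\[
u_N(t)=P_N\int_t^{\infty}\frac{\sin((t-t')|\nabla|)}{|\nabla|}F(u(t'))\,dt'.
\]
I split the $t'$-integral at $|t-t'|=N^{-1}$. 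On the near-diagonal part I use that $P_N\tfrac{\sin(\tau|\nabla|)}{|\nabla|}$ is bounded on $L_x^R$ with operator norm $\lesssim N^{-1}$, which contributes a term $\lesssim N^{-2}\|P_NF(u)\|_{L_t^\infty L_x^R}$; on the far part I apply (\ref{l5}) and Bernstein, and the time integral $\int_{|t-t'|>N^{-1}}|t-t'|^{-\frac{d-1}{2}(1-\frac2R)}\,dt'$ converges precisely because $R>\frac{2(d-1)}{d-3}$ (this is exactly the regime in which the kernel singularity fails to be integrable, which is what forces the splitting), contributing $\lesssim N^{d-2-\frac{2d}{R}}\|P_NF(u)\|_{L_t^\infty L_x^{R'}}$. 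Multiplying through by $N^{\frac dR-1}$, the estimate of $\mathcal{S}(N)$ reduces to bounding $N^{d-3-\frac dR}\|P_N(u^3)\|_{L_t^\infty L_x^{R'}}$ together with $N^{\frac dR-3}\|P_N(u^3)\|_{L_t^\infty L_x^{R}}$.

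The core of the argument is the frequency analysis of $P_N(u^3)$. Writing $u=\sum_M u_M$ and using that $P_N(u_{M_1}u_{M_2}u_{M_3})$ vanishes unless the largest of $M_1,M_2,M_3$ is $\gtrsim N$, I distribute the three factors by H\"older's inequality, measuring one factor in $L_x^R$ (so that $\mathcal{S}$ at that scale appears) and the remaining two in Lebesgue exponents forced by $\sum\tfrac1{a_i}=\tfrac1{R'}$; each of the latter is controlled by Bernstein and the a priori bound $|\nabla|^{s_c}u\in L_t^\infty L_x^2$, which yields $\|u_M\|_{L_x^a}\lesssim M^{1-\frac da}\eta_M$ with $\eta_M=\eta$ for $M\le N_0$ (by (\ref{l62})) and $\eta_M\lesssim 1$ in general. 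The restrictions $R<\frac{2d}{d-4}$ and $R<\frac{3d}{d-1}$ are precisely what is needed to choose all the auxiliary exponents $\ge 2$ and to make the resulting sums over the interacting frequencies geometric. Separating the interactions in which every frequency is $\le N_0$ (each carrying at least one extra factor of $\eta$ and reproducing $\mathcal{S}$ at a nearby dyadic scale) from those in which some frequency exceeds $N_0$ (estimated directly by the a priori bound and generating the forcing term $(N/N_0)^{d-\frac dR-3}$) yields (\ref{l64}), with the ``up'' sum over $N/4\le N_1\le N_0$ carrying the exponent $d-\frac dR-3$ and the ``down'' sum over $N_1\le N/8$ carrying $\frac dR-\frac d2+2$; both exponents are positive in the assumed range of $R$.

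To pass from (\ref{l64}) to (\ref{l65}), I first note that Bernstein and the a priori bound give the crude estimate $\mathcal{S}(N)\lesssim 1$ for all $N$, so $\{\mathcal{S}(N)\}$ is a bounded sequence. Since the kernel exponents $d-\frac dR-3$ and $\frac dR-\frac d2+2$ are strictly positive and $\eta$ has been chosen small, the Gronwall-type lemma of \cite{KillipVisanSupercriticalNLW3D} applies: iterating (\ref{l64}) improves the power of $N$ at each pass up to a logarithmic loss, and summing the resulting geometric series in $\eta$ gives $\mathcal{S}(N)\lesssim (N/N_0)^{d-\frac dR-3-C\eta}$ for $N\le 8N_0$. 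Because $d-\frac dR-3>\frac{d-4}{2}$ --- equivalently $\frac dR<s_c$, which holds since $R>\frac{2(d-1)}{d-3}>\frac{2d}{d-2}=\frac{d}{s_c}$ --- one may take $\eta$ small enough that $d-\frac dR-3-C\eta\ge\frac{d-4}{2}$ and then absorb the remaining (bounded, since $N\le 8N_0$) power of $N$ into the implicit constant, obtaining $\mathcal{S}(N)\lesssim N^{(d-4)/2}$, which is (\ref{l65}).

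The step I expect to be the main obstacle is the frequency bookkeeping in the estimate for $P_N(u^3)$: one has to route the three factors through $L_x^R$ and the auxiliary Lebesgue spaces so that every power of the interacting frequency $N_1$ comes out with the correct sign --- this is where all three restrictions on $R$ are genuinely used --- while simultaneously coping with the non-integrable time singularity of the dispersive kernel. In particular, verifying that the interactions involving frequencies above $N_0$ decay like the genuine power $(N/N_0)^{d-\frac dR-3}$, rather than being merely bounded, is the delicate point on which the whole iteration rests.
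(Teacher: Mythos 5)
Your proposal follows essentially the same route as the paper: the forward Duhamel representation of Lemma \ref{l17}, a time-splitting at $|t-t'|=N^{-1}$ with Bernstein on the near part and the dispersive estimate (\ref{l5}) on the far part (convergence from $R>\tfrac{2(d-1)}{d-3}$), the frequency decomposition of $u^3$ with the vanishing of $P_N(u_{\le N/8}^3)$, the smallness (\ref{l62}) supplying the factor $\eta$, the above-$N_0$ interactions producing the forcing term, and the Gronwall lemma of \cite{KillipVisanSupercriticalNLW3D} to pass from (\ref{l64}) to (\ref{l65}) using $d-\tfrac dR-3>\tfrac{d-4}{2}$. The only (cosmetic) deviation is that you bound the near-diagonal piece directly as an $L_x^R$ multiplier estimate rather than passing through $L_x^2$ via Bernstein; both give the same power of $N$.
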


\begin{proof}
We argue as in \cite{KillipVisanECritical,KillipVisanSupercriticalNLS}.  Let $N\leq 8N_0$.  We first observe that by Bernstein's inequality together with the Sobolev embedding and $u\in L_t^\infty \dot{H}_x^{s_c}$,
\begin{align*}
\mathcal{S}(N)&\lesssim N^{\frac{d}{2}-1}\lVert u_N\rVert_{L_t^\infty L_x^2}\lesssim \lVert |\nabla|^{s_c}u_N\rVert_{L_t^\infty L_x^2}<\infty,
\end{align*}

We now turn our attention to ($\ref{l64}$).  We first note that using the time translation symmetry, it suffices to prove the result when $t=0$.  Then, by using the Duhamel formula ($\ref{l18}$) combined with Minkowski's inequality, we obtain
\begin{align}
&\nonumber N^{\frac{d}{R}-1}\lVert u_N(0)\rVert_{L_x^R}\\
\nonumber &\hspace{0.2in}\lesssim N^{\frac{d}{R}-1}\bigg(\int_0^{N^{-1}} \lVert \tfrac{\sin(-t'|\nabla|)}{|\nabla|}P_NF(u(t'))\rVert_{L_x^R}dt'\\
&\hspace{1.4in}+\int_{N^{-1}}^\infty \lVert \tfrac{\sin(-t'|\nabla|)}{|\nabla|}P_NF(u(t'))\rVert_{L_x^R}dt'\bigg).
\label{l66}
\end{align}
We then use Bernstein's inequality on the first term and the dispersive inequality ($\ref{l5}$) on the second term to obtain
\begin{align}
\nonumber (\ref{l66})&\lesssim N^{\frac{d}{R}-1}\bigg(\int_0^{N^{-1}} N^{\frac{d}{2}-\frac{d}{R}}\lVert \tfrac{\sin(-t'|\nabla|)}{|\nabla|}P_NF(u(t'))\rVert_{L_x^2}dt'\\
\nonumber &\hspace{0.85in}+\int_{N^{-1}}^\infty |t'|^{-(d-1)(\frac{1}{2}-\frac{1}{R})}\lVert |\nabla|^{\frac{d-1}{2}-\frac{d+1}{R}}P_NF(u(t'))\rVert_{L_x^{R'}}dt'\bigg)\\
\nonumber &\lesssim N^{\frac{d}{R}-1}\bigg(\int_0^{N^{-1}} N^{\frac{d}{2}-\frac{d}{R}}\lVert |\nabla|^{-1}P_NF(u(t'))\rVert_{L_x^2}dt'\\
\nonumber &\hspace{0.85in}+\int_{N^{-1}}^\infty |t'|^{-(d-1)(\frac{1}{2}-\frac{1}{R})}\lVert |\nabla|^{\frac{d-1}{2}-\frac{d+1}{R}}P_NF(u(t'))\rVert_{L_x^{R'}}dt'\bigg)\\
\nonumber &\lesssim N^{\frac{d}{2}-3}\lVert P_NF(u)\rVert_{L_t^\infty L_x^2}+N^{d-\frac{d}{R}-3}\lVert P_NF(u)\rVert_{L_t^\infty L_x^{R'}}\\
&\lesssim N^{d-\frac{d}{R}-3}\lVert P_NF(u)\rVert_{L_t^\infty L_x^{R'}}\label{l67}
\end{align}
where in passing from the the first line to the third we use ($\ref{l5}$) once more and in passing from the fourth line to the fifth line, we used the fact that $(d-1)(\frac{1}{2}-\frac{1}{R})>1$ to observe the finiteness of the integral.

Collecting ($\ref{l66}$) and ($\ref{l67}$), we obtain 
\begin{align*}
N^{\frac{d}{R}-1}\lVert u_N(0)\rVert_{L_x^R}\lesssim N^{d-\frac{d}{R}-3}\lVert P_NF(u)\rVert_{L_t^\infty L_x^{R'}}.
\end{align*}

Now to establish ($\ref{l64}$), it remains to estimate the term $\lVert P_NF(u)\rVert_{L_t^\infty L_x^{R'}}$.  We start by decomposing $u$ as
\begin{align*}
u&=u_{\leq \frac{N}{8}}+u_{\frac{N}{8}<\cdot\leq N_0}+u_{>N_0}\\
&=:u_1+u_2+u_3.
\end{align*}
Note that this decomposition gives
\begin{align*}
\lVert P_N(u^3)\rVert_{L_t^\infty L_x^{R'}}&=\lVert P_N\bigg(\sum_{i=1}^3 u_i\bigg)^3\rVert_{L_t^\infty L_x^{R'}}\\
&=\lVert \sum_{i,j,k=1}^3 P_N(u_iu_ju_k)\rVert_{L_t^\infty L_x^{R'}}\\
&\lesssim \lVert P_N(u_1^3)\rVert_{L_t^\infty L_x^{R'}}+\lVert P_N(u_2^3)\rVert_{L_t^\infty L_x^{R'}}\\
&\hspace{0.1in}+\sum_{i,j=1}^3 \lVert P_N(u_3u_iu_j)\rVert_{L_t^\infty L_x^{R'}}+\sum_{i=1}^2 \lVert P_N(u_2u_1u_i)\rVert_{L_t^\infty L_x^{R'}},
\end{align*}
where we have grouped some terms.

Using this inequality combined with the boundedness of $P_N$, we obtain
\begin{align}
\nonumber N^{\frac{d}{R}-1}\lVert u_N(0)\rVert_{L_x^R}&\lesssim N^{d-\frac{d}{R}-3}\lVert P_NF(u)\rVert_{L_t^\infty L_x^{R'}}\\
\nonumber &\leq N^{d-\frac{d}{R}-3}\bigg(\lVert P_Nu_1^3\rVert_{L_t^\infty L_x^{R'}}+\lVert u_2^3\rVert_{L_t^\infty L_x^{R'}}\\
\nonumber &\hspace{0.2in}+\sum_{i,j=1}^3\lVert u_3u_iu_j\rVert_{L_t^\infty L_x^{R'}}+\sum_{i=1}^2 \lVert u_1u_2u_i\rVert_{L_t^\infty L_x^{R'}}\bigg)\\
&=N^{d-\frac{d}{R}-3}\bigg((I)+(II)+(III)_{i,j}+(IV)_i\bigg)\label{l68}
\end{align}

We now estimate each of the above terms $(I)$, $(II)$, $(III)_{i,j}$ and $(IV)_{i}$ separately.

\underline{\it Term (I)}: 
By the support of the Fourier transform of $u_{\leq\frac{N}{8}}(t)^3$, we have
\begin{align}
\label{l69} P_N[u_{\leq \frac{N}{8}}(t)^3]\equiv 0,
\end{align}
so that $(I)=0$.

\underline{\it Term (II)}: 
Using H\"older's inequality, the Sobolev embedding, and the boundedness of $P_{>\frac{N}{8}}$ together with Bernstein's inequality, we obtain
\begin{align}
&\nonumber \lVert u_{2}^3\rVert_{L_t^\infty L_x^{R'}}\leq \lVert u_{2}\rVert_{L_t^\infty L_x^d}\lVert u_{2}\rVert_{L_t^\infty L_x^\frac{2Rd}{Rd-d-R}}^2\\
\nonumber &\hspace{0.2in}\lesssim \lVert |\nabla|^{s_c}u_{\leq N_0}\rVert_{L_t^\infty L_x^2}\bigg[\sum_{\frac{2N}{8}\leq N_1\leq N_2\leq N_0}\lVert u_{N_1}\rVert_{L_t^\infty L_x^\frac{2Rd}{Rd-d-R}}\lVert u_{N_2}\rVert_{L_t^\infty L_x^\frac{2Rd}{Rd-d-R}}\bigg]\\
\nonumber &\hspace{0.2in}\lesssim \lVert |\nabla|^{s_c}u_{\leq N_0}\rVert_{L_t^\infty L_x^2}\\
\nonumber &\hspace{0.4in}\bigg[\sum_{\frac{2N}{8}\leq N_1\leq N_2\leq N_0}N_1^{\frac{3d-Rd+R}{2R}}\lVert u_{N_1}\rVert_{L_t^\infty L_x^R}N_2^{-\frac{Rd-d-3R}{2R}}\lVert |\nabla|^\frac{Rd-d-3R}{2R}u_{N_2}\rVert_{L_t^\infty L_x^\frac{2Rd}{Rd-d-R}}\bigg]\\ 
\nonumber &\hspace{0.2in}\lesssim \lVert |\nabla|^{s_c}u_{\leq N_0}\rVert_{L_t^\infty L_x^2}\\
\nonumber &\hspace{0.4in}\bigg[\sum_{N_1=\frac{2N}{8}}^{N_0} \bigg\{N_1^{\frac{3d-Rd+R}{2R}}\lVert u_{N_1}\rVert_{L_t^\infty L_x^R}\\
\nonumber &\hspace{1.75in}\left(\sum_{N_2=N_1}^{N_0}N_2^{-\frac{Rd-d-3R}{2R}}\lVert |\nabla|^{s_c}u_{N_2}\rVert_{L_t^\infty L_x^2}\right)\bigg\}\bigg]\\
\nonumber &\hspace{0.2in}\lesssim \lVert |\nabla|^{s_c}u_{\leq N_0}\rVert_{L_t^\infty L_x^2}\\
\nonumber &\hspace{0.4in}\bigg[\sum_{N_1=\frac{2N}{8}}^{N_0} \bigg\{N_1^{\frac{3d-Rd+R}{2R}}\lVert u_{N_1}\rVert_{L_t^\infty L_x^R} \\
\nonumber &\hspace{1.75in}\left(N_1^{-\frac{Rd-d-3R}{2R}}\lVert (u,u_t)\rVert_{L_t^\infty(\mathbb{R};\dot{H}_x^{s_c}\times\dot{H}_x^{s_c-1})}\right)\bigg\}\bigg].
\end{align}
where to obtain the third inequality we note that $R<\frac{3d}{d-1}$.

Thus, using ($\ref{l62}$) in the last inequality above, we obtain
\begin{align}
\nonumber (II)&\lesssim \eta\sum_{N_1=\frac{2N}{8}}^{N_0} N_1^{\frac{2d}{R}-d+2}\lVert u_{N_1}\rVert_{L_t^\infty L_x^R}\\
&=\eta\sum_{N_1=\frac{2N}{8}}^{N_0} N_1^{\frac{d}{R}-d+3}\mathcal{S}(N_1).
\label{l70}
\end{align}

\underline{\it Term $(III)_{i,j}$}: 
Fix $i,j\in \{1,2,3\}$.  Using H\"older's inequality followed by the Bernstein and Sobolev inequalities, we get
\begin{align}
\nonumber \lVert u_{>N_0}u_iu_j\rVert_{L_t^\infty L_x^{R'}}&\leq \lVert u_{>N_0}\rVert_{L_t^\infty L_x^\frac{dR'}{d-2R'}}\lVert u_{i}\rVert_{L_t^\infty L_x^d}\lVert u_j\rVert_{L_t^\infty L_x^d}\\
\nonumber &\lesssim N_0^{3-\frac{d}{R'}}\lVert |\nabla|^{\frac{d}{R'}-3}u_{>N_0}\rVert_{L_t^\infty L_x^\frac{dR'}{d-2R'}}\lVert u\rVert_{L_t^\infty L_x^d}^{2}\\
\nonumber &\lesssim N_0^{3-\frac{d}{R'}}\lVert |\nabla|^{s_c}u_{>N_0}\rVert_{L_t^\infty L_x^2}\lVert |\nabla|^{s_c}u\rVert_{L_t^\infty L_x^2}^2\\
&\lesssim N_0^{3+\frac{d}{R}-d}\label{l71}
\end{align}
where in passing from the second line to the third line, we use $R<\frac{2d}{d-4}$, and in the last inequality we observed that 
\begin{align*}
\lVert |\nabla|^{s_c}u_{>N_0}\rVert_{L_t^\infty L_x^2}\leq \lVert (u,u_t)\rVert_{L_t^\infty(\mathbb{R};\dot{H}_x^{s_c}\times \dot{H}_x^{s_c-1})}.
\end{align*}

\underline{\it Term $(IV)_i$}:
Fix $i\in \{1,2\}$.  By H\"older's inequality, together with the Sobolev and Bernstein inequalities, we have
\begin{align}
\nonumber &\lVert u_{\frac{N}{8}<\cdot\leq N_0}u_{\leq \frac{N}{8}}u_i\rVert_{L_t^\infty L_x^{R'}}\\
\nonumber &\hspace{0.2in}\leq \lVert u_{\frac{N}{8}<\cdot \leq N_0}\rVert_{L_t^\infty L_x^2}\lVert u_{\leq \frac{N}{8}}\rVert_{L_t^\infty L_x^\frac{2Rd}{(d-2)R-2d}}\lVert u_{i}\rVert_{L_t^\infty L_x^d}\\
\nonumber &\hspace{0.2in}\lesssim \lVert P_{>\frac{N}{8}}P_{\leq N_0}u\rVert_{L_t^\infty L_x^2}\lVert u_{\leq \frac{N}{8}}\rVert_{L_t^\infty L_x^\frac{2Rd}{(d-2)R-2d}}\lVert |\nabla|^{s_c}u\rVert_{L_t^\infty L_x^2}\\
\nonumber &\hspace{0.2in}\lesssim \left(\tfrac{N}{8}\right)^{-s_c}\lVert |\nabla|^{s_c}u_{\frac{N}{8}<\cdot\leq N_0}\rVert_{L_t^\infty L_x^2}\sum_{N_1\leq \frac{N}{8}} \lVert u_{N_1}\rVert_{L_t^\infty L_x^\frac{2Rd}{(d-2)R-2d}}\\
\label{l72}&\hspace{0.2in}\lesssim N^{1-\frac{d}{2}}\eta\sum_{N_1\leq \frac{N}{8}}\lVert u_{N_1}\rVert_{L_t^\infty L_x^\frac{2Rd}{(d-2)R-2d}}\\
\label{l73} &\hspace{0.2in}\lesssim N^{1-\frac{d}{2}}\eta\sum_{N_1\leq \frac{N}{8}} N_1^{\frac{d}{R}-\frac{(d-2)R-2d}{2R}}\lVert u_{N_1}\rVert_{L_t^\infty L_x^R}\\
\nonumber &\hspace{0.2in}\lesssim N^{1-\frac{d}{2}}\eta \sum_{N_1\leq \frac{N}{8}} N_1^{\frac{2d}{R}-\frac{d}{2}+1}N_1^{1-\frac{d}{R}}\mathcal{S}(N_1)\\
\label{l74}&\hspace{0.2in}=N^{\frac{d}{R}-d+3}\eta\sum_{N_1\leq \frac{N}{8}} \left(\tfrac{N_1}{N}\right)^{\frac{d}{R}-\frac{d}{2}+2}.
\end{align}
where to obtain ($\ref{l72}$) we note that $N\leq 8N_0$ and to obtain ($\ref{l73}$) we used $R<\frac{3d}{d-1}$.

Collecting the estimates (\ref{l68}), (\ref{l69}), (\ref{l70}), (\ref{l71}) and (\ref{l74}), we obtain the desired inequality ($\ref{l64}$).

To obtain ($\ref{l65}$), we invoke Lemma $\ref{l107}$ in Appendix A.  This is a version of Gronwall's inequality which we recall from \cite{KillipVisanSupercriticalNLW3D}.  In particular, we define $x_k=\mathcal{S}(2^{-k}N_0)$, $k\in\mathbb{N}$ and note that ($\ref{l64}$) combined with Lemma \ref{l107} gives the bound
\begin{align}
x_k\lesssim 2^{-k\rho}\label{l75}
\end{align}
for each $\rho \in (0,d-\frac{d}{R}-3)$.  For the details in obtaining the bound $(\ref{l75})$ we refer the reader to Appendix A.  Thus, for each $N=2^{-k}N_0\leq 8N_0$ we obtain
\begin{align*}
\mathcal{S}(N)=\mathcal{S}(2^{-k}N_0)\lesssim (2^{-k})^\rho\sim N^\rho.
\end{align*}
Taking $\rho=\frac{d-4}{2}$ gives the desired bound ($\ref{l65}$).
\end{proof}

With this lemma in hand, we are now ready to prove Lemma $\ref{l58}$:
\begin{proof}[Proof of Lemma \ref{l58}.]
Recalling the definition of $\mathcal{S}(N)$, ($\ref{l65}$) shows that for all $N\leq 8N_0$,
\begin{align}
\lVert u_N\rVert_{L_t^\infty L_x^R}\lesssim N^{\frac{d}{2}-\frac{d}{R}-1}.\label{l76}
\end{align}

Then, using ($\ref{l76}$) along with the Bernstein inequalities, we obtain
\begin{align*}
\lVert u\rVert_{L_t^\infty L_x^R}&\leq \lVert u_{\leq N_0}\rVert_{L_t^\infty L_x^R}+\lVert u_{>N_0}\rVert_{L_t^\infty L_x^R}\\
&\lesssim \sum_{N\leq N_0} \lVert u_N\rVert_{L_t^\infty L_x^R}+\sum_{N>N_0}N^{\frac{d}{2}-\frac{d}{R}}\lVert u_{N}\rVert_{L_t^\infty L_x^2}\\
&\lesssim \sum_{N\leq N_0} N^{\frac{d}{2}-\frac{d}{R}-1}+\sum_{N>N_0}N^{1-\frac{d}{R}}\lVert |\nabla|^{s_c}u\rVert_{L_t^\infty L_x^2}\\
&\lesssim 1,
\end{align*}
where we note that our hypotheses on $d$ and $R$ ensure that $\frac{d}{2}-\frac{d}{R}-1>0$ and $1-\frac{d}{R}<0$.  Since $R$ is arbitrary, we obtain the lemma for every $q_0\in (\frac{2(d-1)}{d-3},\min\{\frac{2d}{d-4},\frac{3d}{d-1}\})$.  

We note that the lemma then follows for every $q_0\in (\frac{2(d-1)}{d-3},d]$ by using interpolation with the $L_t^\infty L_x^d$ bound which results from combining the a priori bound $u\in L_t^\infty\dot{H}_x^{s_c}$ with the Sobolev embedding.
\end{proof}

\subsection{Proof of Lemma \ref{l60}.}\ \\
\hspace*{0.1in} Let $u, q_1$ and $s$ be given as stated in the lemma and choose $s_0\in (0,\frac{2(d-q_1)}{q_1})$.  Applying the Bernstein inequalities, we argue as follows: 
\begin{align}
\nonumber &\lVert |\nabla|^{s-s_0} u\rVert_{L_t^\infty L_x^2}+\lVert |\nabla|^{s-1-s_0} u_t\rVert_{L_t^\infty L_x^2}\\
\nonumber &\hspace{0.2in}\leq \sum_{N\leq 1} \lVert |\nabla|^{s-s_0} u_N\rVert_{L_t^\infty L_x^2}+\lVert |\nabla|^{s-1-s_0}\partial_t u_N\rVert_{L_t^\infty L_x^2}\\
\nonumber &\hspace{0.4in}+\sum_{N>1} \lVert |\nabla|^{s-s_0} u_N\rVert_{L_t^\infty L_x^2}+\lVert |\nabla|^{s-1-s_0} \partial_t u_N\rVert_{L_t^\infty L_x^2}\\
\nonumber &\hspace{0.2in}\lesssim \sum_{N\leq 1} N^{-s_0}\bigg[\lVert |\nabla|^s u_N\rVert_{L_t^\infty L_x^2}+\lVert |\nabla|^{s-1}\partial_t u_N\rVert_{L_t^\infty L_x^2}\bigg]\\
\nonumber &\hspace{0.4in}+\sum_{N>1} N^{s-s_0-s_c}\bigg[\lVert |\nabla|^{s_c} u_N\rVert_{L_t^\infty L_x^2}+\lVert |\nabla|^{s_c-1}\partial_tu_N\rVert_{L_t^\infty L_x^2}\bigg]\\
\nonumber &\hspace{0.2in}\lesssim \sum_{N\leq 1} N^{-s_0} \bigg[\lVert |\nabla|^s u_N\rVert_{L_t^\infty L_x^2}+\lVert |\nabla|^{s-1}\partial_tu_N\rVert_{L_t^\infty L_x^2}\bigg]+\sum_{N>1} N^{s-s_0-s_c}\\
\label{l77}&\hspace{0.2in}\lesssim \sum_{N\leq 1} N^{-s_0} \bigg[\lVert |\nabla|^s u_N\rVert_{L_t^\infty L_x^2}+\lVert |\nabla|^{s-1}\partial_t u_N\rVert_{L_t^\infty L_x^2}\bigg]+1
\end{align}
where we note $\lVert (u,u_t)\rVert_{L_t^\infty(\dot{H}_x^{s_c}\times \dot{H}_x^{s_c-1})}\leq C$ to obtain the third inequality followed by $\displaystyle \sum_{N>1} N^{s-s_0-s_c}<\infty$ for $s-s_0-s_c<0$ to obtain the fourth inequality.

To obtain ($\ref{l61}$), it thus remains to estimate the term $\lVert |\nabla|^s u_N\rVert_{L_t^\infty L_x^2}+\lVert |\nabla|^{s-1} \partial_t u_N\rVert_{L_t^\infty L_x^2}$ in ($\ref{l77}$).  
We begin by noting that the unitary property of the linear propagator $\mathcal{W}(\cdot)$ implies that for every $t_1,t_2\in\mathbb{R}$ and $g,h\in L^2$,
\begin{align*}
&\langle |\nabla|\tfrac{\sin(t_1|\nabla|)}{|\nabla|}g,-|\nabla|\tfrac{\sin(t_2|\nabla|)}{|\nabla|}h\rangle+\langle \cos(t_1|\nabla|)g,-\cos(t_2|\nabla|)h\rangle\\
&\hspace{0.9in}=\langle g,-\cos((t_1-t_2)|\nabla|)h\rangle,
\end{align*}
Next, without loss of generality we take $t=0$, and note that by using the above observation and Lemma $\ref{l17}$ we write
\begin{align}
\nonumber &\lVert |\nabla|^s u_N(0)\rVert_{L_x^2}^2+\lVert |\nabla|^{s-1} \partial_t u_N(0)\rVert_{L_x^2}^2\\
\nonumber &\hspace{0.1in}=\lim_{T\rightarrow\infty}\lim_{T'\rightarrow -\infty} \langle |\nabla|\int_0^T \tfrac{\sin(-t'|\nabla|)}{|\nabla|}P_N|\nabla|^{s-1}F(u(t'))dt',\\
\nonumber &\hspace{1.4in}-|\nabla| \int_{T'}^0 \tfrac{\sin(-\tau'|\nabla|)}{|\nabla|}P_N|\nabla|^{s-1}F(u(\tau'))d\tau'\rangle\\
\nonumber &\hspace{0.5in}+\langle \int_0^T \cos(-t'|\nabla|)P_N|\nabla|^{s-1}F(u(t'))dt',\\
\nonumber &\hspace{1.4in}-\int_{T'}^0 \cos(-\tau'|\nabla|)P_N|\nabla|^{s-1}F(u(\tau'))d\tau'\rangle\\
\label{l78}&\hspace{0.1in}\leq \int_0^\infty \int_{-\infty}^0 \bigg|\langle P_N|\nabla|^{s-1}F(u(t')),-\cos((t'-\tau')|\nabla|)P_N|\nabla|^{s-1}F(u(\tau'))\rangle\bigg|d\tau'dt'
\end{align}

Setting $r=\frac{2q_1}{q_1+4}$ and using H\"older's inequality followed by Proposition $\ref{l4}$ and Bernstein's inequalities, we obtain
\begin{align}
\nonumber &\big|\langle P_N|\nabla|^sF(u(t')),\tfrac{\cos((t'-\tau')|\nabla|)}{|\nabla|^2}P_N|\nabla|^sF(u(\tau'))\rangle\big|\\
\nonumber &\hspace{0.4in}\lesssim \lVert P_N|\nabla|^sF(u(t'))\rVert_{L_x^{r}}\lVert \tfrac{\cos((t'-\tau')|\nabla|)}{|\nabla|^2}P_N|\nabla|^sF(u(\tau'))\rVert_{L_x^{r'}}\\
\nonumber &\hspace{0.4in}\lesssim \tfrac{1}{|t'-\tau'|^{(d-1)\left(\frac{1}{2}-\frac{1}{r'}\right)}}\lVert P_N|\nabla|^sF(u(t'))\rVert_{L_x^{r}}\lVert |\nabla|^{\frac{d-3}{2}-\frac{d+1}{r'}}P_N|\nabla|^sF(u(\tau'))\rVert_{L_x^{r}}\\
&\hspace{0.4in}\lesssim \tfrac{N^{\frac{d-3}{2}-\frac{d+1}{r'}}}{|t'-\tau'|^{(d-1)\left(\frac{1}{2}-\frac{1}{r'}\right)}}\lVert P_N|\nabla|^s F(u(t'))\rVert_{L_t^\infty L_x^{r}}^2\label{l79}
\intertext{On the other hand, using the Cauchy-Schwarz inequality followed by Proposition $\ref{l4}$ (with $p=2$) and Bernstein's inequality, we obtain}
\nonumber &\big|\langle P_N|\nabla|^sF(u(t')),\tfrac{\cos((t'-\tau')|\nabla|)}{|\nabla|^2}P_N|\nabla|^sF(u(\tau'))\rangle\big|\\
\nonumber &\hspace{0.4in}\lesssim \lVert P_N|\nabla|^sF(u(t'))\rVert_{L_x^2}\lVert \tfrac{\cos((t'-\tau')|\nabla|)}{|\nabla|^2}P_N|\nabla|^sF(u(\tau'))\rVert_{L_x^2}\\
\nonumber &\hspace{0.4in}\lesssim \lVert P_N|\nabla|^sF(u)\rVert_{L_x^2}\lVert |\nabla|^{-2}P_N|\nabla|^sF(u)\rVert_{L_x^2}\\
\nonumber &\hspace{0.4in}\lesssim N^{-2}\lVert P_N|\nabla|^sF(u)\rVert_{L_x^2}^2\\
\label{l80}&\hspace{0.4in}\lesssim N^{-2+\frac{2d}{r}-d}\lVert |\nabla|^s F(u)\rVert_{L_t^\infty L_x^{r}}^2,
\end{align}
where we recall that $r<\frac{2d}{d+4}<2$.

Invoking the bounds $(\ref{l79})$ and $(\ref{l80})$ in $(\ref{l78})$ and using Lemma $\ref{l19}$, we obtain
\begin{align}
\nonumber &\lVert |\nabla|^s u_N(0)\rVert_{L_x^2}^2+\lVert |\nabla|^{s-1}\partial_t u_N(0)\rVert_{L_x^2}^2\\
\nonumber &\hspace{0.2in}\leq \lVert |\nabla|^sF(u)\rVert_{L_t^\infty L_x^{r}}^2\int_0^\infty \int_{-\infty}^0 \min \{\tfrac{N^{\frac{d-3}{2}-\frac{d+1}{r'}}}{|t'-\tau'|^{(d-1)(\frac{1}{2}-\frac{1}{r'})}},N^{-2+d-\frac{2d}{r'}}\}  dt'd\tau'\\
\nonumber &\hspace{0.2in}\leq \lVert |\nabla|^su\rVert^2_{L_t^\infty L_x^2}\lVert u\rVert_{L_t^\infty L_x^{q_1}}^4\int_0^\infty\int_{-\infty}^0 \min\{\tfrac{N^{\frac{d-3}{2}-\frac{d+1}{r'}}}{|t'-\tau'|^{(d-1)(\frac{1}{2}-\frac{1}{r'})}},N^{-2+d-\frac{2d}{r'}}\}dt'd\tau'\\
\nonumber &\hspace{0.2in}=N^{-2+d-\frac{2d}{r'}}\lVert |\nabla|^su\rVert^2_{L_t^\infty L_x^2}\lVert u\rVert_{L_t^\infty L_x^{q_1}}^4\int_0^\infty\int_{-\infty}^0 \min\{\tfrac{N^{-(d-1)}}{|t'-\tau'|^{d-1}},1 \}^{\frac{1}{2}-\frac{1}{r'}}dt'd\tau'
\end{align}
We conclude the proof by estimating the above integral.  To this end, we use the bound
\begin{align}
\int_0^\infty \int_{-\infty}^0 \min\{\tfrac{N^{-(d-1)}}{|t'-\tau'|^{d-1}},1\}^{\frac{1}{2}-\frac{1}{r'}}dt'd\tau'&\lesssim N^{-2},
\end{align}
which follows from the assumption $q_1<d-1$ and a straightforward computation.

Invoking this bound in ($\ref{l77}$) and using the hypotheses $u\in L_t^\infty L_x^{q_1}$ and $|\nabla|^su\in L_t^\infty L_x^2$, we get
\begin{align*}
&\lVert |\nabla|^{s-s_0}u\rVert_{L_t^\infty L_x^2}+\lVert |\nabla|^{s-s_0-1}u_t\rVert_{L_t^\infty L_x^2}\\
&\hspace{0.8in}\lesssim \sum_{N\leq 1} N^{-s_0}N^{-2+\frac{d}{2}-\frac{d}{r'}}+1\\
&\hspace{0.8in}=\sum_{N\leq 1} N^{\frac{2d}{q}-2-s_0}+1
\end{align*}

Note that by our choice of $s_0$, we have $\frac{2d}{q_1}-2-s_0>0$, so that the desired bound ($\ref{l61}$) holds.
\qed
\section{Soliton-like solution}
In this section, we rule out the second blow-up scenario identified in Theorem $\ref{l15}$, the soliton-like solution.  

As in \cite{KillipVisanSupercriticalNLS,KillipVisanSupercriticalNLW3D}, our approach to obtain the desired contradiction is to get an upper and lower bound on the quantity 
\begin{align}
\int_{I}\int_{\mathbb{R}^d} \frac{|u(t,x)|^4}{|x|}dxdt,
\label{l81}
\end{align}
with a time interval $I\subset\mathbb{R}$.  Indeed, the Morawetz estimate (Theorem $\ref{l7}$) and the additional decay property given in Theorem $\ref{l56}$ immediately imply that ($\ref{l81}$) is bounded from above independent of $I$.  The contradiction will then follow once we obtain a lower bound on ($\ref{l81}$) which grows to infinity as $|I|\rightarrow\infty$.  

We obtain the lower bound in two steps: the first step is to get an estimate on the growth of $x(t)$ via the finite speed of propagation in the form of Lemma $\ref{l37}$.  The second step is then to show that the $L_{t,x}^4$ norm of $u$ over unit time intervals and localized in space near $x(t)$ is bounded away from zero. 

The key ingredient used to control $x(t)$ in Step $1$ is to obtain a bound from below in a suitable space for all times.  This requires the additional decay result, Theorem $\ref{l56}$.
\begin{lemma}
\label{l82}
Suppose that $u:\mathbb{R}\times \mathbb{R}^d\rightarrow\mathbb{R}$ is a solution to (NLW) which satisfies the properties of a soliton-like solution stated in Theorem $\ref{l15}$.  Then there exists $\eta>0$ such that for all $t\in\mathbb{R}$, 
\begin{align*}
\int_{\mathbb{R}^d} |u(t,x)|^d+|u_t(t,x)|^\frac{d}{2}dx\geq \eta.
\end{align*}
\end{lemma}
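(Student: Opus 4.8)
The plan is to argue by contradiction, in the spirit of the analogous arguments for NLS and for three-dimensional NLW in \cite{KillipVisanSupercriticalNLS,KillipVisanSupercriticalNLW3D}. Suppose the conclusion fails; then there is a sequence of times $t_n\in\mathbb{R}$ along which $\int_{\mathbb{R}^d}(|u(t_n,x)|^d+|u_t(t_n,x)|^\frac{d}{2})\,dx\to 0$ as $n\to\infty$. Since $u$ is a soliton-like solution we have $N(t)\equiv 1$, so the property of being almost periodic modulo symmetries reduces to the statement that the set $\{(u(t,x(t)+\cdot),u_t(t,x(t)+\cdot)):t\in\mathbb{R}\}$ has compact closure in $\dot{H}_x^{s_c}\times\dot{H}_x^{s_c-1}$. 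In particular, writing $(v_{0,n},v_{1,n}):=(u(t_n,x(t_n)+\cdot),u_t(t_n,x(t_n)+\cdot))$ and passing to a subsequence, we may assume $(v_{0,n},v_{1,n})\to(f,g)$ in $\dot{H}_x^{s_c}\times\dot{H}_x^{s_c-1}$ for some $(f,g)$.

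The first step is to identify the limit $(f,g)$ as $(0,0)$. Since $s_c=\frac{d-2}{2}$, the Sobolev embeddings give $\dot{H}_x^{s_c}\hookrightarrow L_x^d$ and $\dot{H}_x^{s_c-1}\hookrightarrow L_x^{d/2}$; combining these with the translation invariance of the $L_x^p$ norms yields $\lVert f\rVert_{L_x^d}=\lim_n\lVert v_{0,n}\rVert_{L_x^d}=\lim_n\lVert u(t_n)\rVert_{L_x^d}=0$, and likewise $\lVert g\rVert_{L_x^{d/2}}=0$. Hence $f=g=0$, that is, $(v_{0,n},v_{1,n})\to(0,0)$ in $\dot{H}_x^{s_c}\times\dot{H}_x^{s_c-1}$.

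The contradiction then comes from the local theory. Let $v_n(\tau,x):=u(t_n+\tau,x(t_n)+x)$. By the space- and time-translation symmetries of (NLW), $v_n$ is the solution with initial data $(v_{0,n},v_{1,n})$, it is global (since $I=\mathbb{R}$), and $\lVert v_n\rVert_{L_{t,x}^{d+1}(\mathbb{R}\times\mathbb{R}^d)}=\lVert u\rVert_{L_{t,x}^{d+1}(\mathbb{R}\times\mathbb{R}^d)}=\infty$ because $u$ is a blow-up solution. On the other hand, set $A:=\lVert(u,u_t)\rVert_{L_t^\infty(\mathbb{R};\dot{H}_x^{s_c}\times\dot{H}_x^{s_c-1})}$ and take $\delta_0=\delta_0(d,A)$ as in Theorem~\ref{l22}. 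By the Strichartz inequality, $\lVert\mathcal{W}(t)(v_{0,n},v_{1,n})\rVert_{L_{t,x}^{d+1}(\mathbb{R}\times\mathbb{R}^d)}\lesssim\lVert(v_{0,n},v_{1,n})\rVert_{\dot{H}_x^{s_c}\times\dot{H}_x^{s_c-1}}\to 0$, so this quantity is $\le\delta_0$ for all large $n$; Theorem~\ref{l22} (applied on $I=\mathbb{R}$) then produces a global solution with data $(v_{0,n},v_{1,n})$ and $L_{t,x}^{d+1}$ norm at most $2\delta_0$. By uniqueness of solutions this coincides with $v_n$, so $\lVert v_n\rVert_{L_{t,x}^{d+1}(\mathbb{R}\times\mathbb{R}^d)}<\infty$ --- a contradiction. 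This proves the lemma; in fact the argument shows that $\eta$ may be taken to be the (positive) minimum over the compact set above of the continuous functional $(\phi,\psi)\mapsto\lVert\phi\rVert_{L_x^d}^d+\lVert\psi\rVert_{L_x^{d/2}}^{d/2}$.

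The argument is essentially soft. The one point that requires care is the extraction step: one must use $N(t)\equiv 1$ so that the almost-periodicity hypothesis becomes precompactness of a set of pure translates, and one must exploit the translation invariance of the $L_x^p$ norms to transfer the vanishing of the original integral to the vanishing of the limiting profile $(f,g)$. Beyond this there is no genuine analytic obstacle, only a direct use of the Sobolev embedding and of the local well-posedness theorem, Theorem~\ref{l22}, on the unbounded interval $\mathbb{R}$.
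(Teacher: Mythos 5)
Your proof is correct, and the first half (the contradiction setup, the extraction of a strongly convergent subsequence from the precompact orbit, and the identification of the limit as $(0,0)$ via the Sobolev embedding and translation invariance) coincides with the paper's. Where you diverge is in how the contradiction is derived once $(u(t_n,x(t_n)+\cdot),u_t(t_n,x(t_n)+\cdot))\to(0,0)$ in $\dot{H}_x^{s_c}\times\dot{H}_x^{s_c-1}$: the paper interpolates the energy $E(u(t_n),u_t(t_n))$ between the vanishing $\dot{H}_x^{s_c}\times\dot{H}_x^{s_c-1}$ norms and the uniform $\dot{H}_x^{1-\epsilon}\times\dot{H}_x^{-\epsilon}$ bound of Theorem~\ref{l56}, concludes $E(u_0,u_1)=0$ by conservation, and hence $u\equiv 0$; you instead invoke the small-data global theory (Strichartz plus Theorem~\ref{l22} on $I=\mathbb{R}$, together with uniqueness) to conclude $\lVert u\rVert_{L_{t,x}^{d+1}}<\infty$ directly. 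Both routes are valid. Yours is more elementary in that it does not use the additional decay result Theorem~\ref{l56} at all, and it is in fact exactly the mechanism the paper itself deploys at the end of the proof of Lemma~\ref{l89}; the paper's energy-based version buys consistency with the treatment in \cite{KillipVisanSupercriticalNLW3D} but costs a dependence on the harder decay theorem. Your closing remark that $\eta$ can be taken as the minimum of the continuous functional $(\phi,\psi)\mapsto\lVert\phi\rVert_{L_x^d}^d+\lVert\psi\rVert_{L_x^{d/2}}^{d/2}$ over the compact closure of the orbit is a clean way to package the same contradiction, since positivity of that minimum is precisely the statement that $(0,0)$ is not in the closure.
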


\begin{proof}
Suppose to the contrary that the claim failed.  Then there exists a sequence $\{t_n\}\subset\mathbb{R}$ such that
\begin{align}
(u(t_n),u_t(t_n))\rightarrow (0,0)\quad\textrm{in}\quad L^d_x\times L_x^{\frac{d}{2}}
\label{l83}
\end{align}
as $n\rightarrow\infty$.
Since $u$ is a soliton-like solution, $\{(u(t_n,x(t_n)+\cdot),u_t(t_n,x(t_n)+\cdot)):n\in\mathbb{N}\}$ has compact closure in $\dot{H}_x^{s_c}\times \dot{H}_x^{s_c-1}$.

Note that by the precompactness of $\{u(t_n,x(t_n)+\cdot),u_t(t_n,x(t_n)+\cdot):n\in\mathbb{N}\}$ there exists a subsequence (still indexed by $n$) such that $(u(t_n,x(t_n)+\cdot),u_t(t_n,x(t_n)+\cdot))\rightarrow (u_0^*,u_1^*)$ in $\dot{H}_x^{s_c}\times \dot{H}_x^{s_c-1}$.  However, $(\ref{l83})$ and the change of variable $x\mapsto x(t_n)+x$ imply $(u(t_n,x(t_n)+\cdot),u_t(t_n,x(t_n)+\cdot))\rightarrow (0,0)$ in $L_x^d\times L_x^\frac{d}{2}$, so that the continuous embedding $\dot{H}_x^{s_c}\times\dot{H}_x^{s_c-1}\hookrightarrow L_x^d\times L_x^\frac{d}{2}$ and the uniqueness of limits give $(u_0^*,u_1^*)=(0,0)$.  Thus by the change of variable $x\mapsto -x(t_n)+x$, we have
\begin{align}
\label{l84}(u(t_n),u_t(t_n))\rightarrow (0,0) \quad\textrm{ in}\quad \dot{H}_x^{s_c}\times \dot{H}_x^{s_c-1}.
\end{align}

We now note that for all $n\in\mathbb{N}$, if $\epsilon$ is as in Theorem $\ref{l56}$, then there exist $\theta_1,\theta_2,\theta_3\in (0,1)$ such that
\begin{align}
\nonumber E(u_0,u_1)&=E(u(t_n),u_t(t_n))\\
\nonumber &=\frac{1}{2}\int_{\mathbb{R}^d} |\nabla u(t_n)|^2dx+\frac{1}{2}\int_{\mathbb{R}^d} |u_t(t_n)|^2dx+\frac{1}{4}\int_{\mathbb{R}^d} |u(t_n)|^4dx\\
\nonumber  &\lesssim \lVert u(t_n)\rVert_{\dot{H}_x^{s_c}}^{\theta_1}\lVert (u,u_t)\rVert_{L_t^\infty(\mathbb{R};\dot{H}_x^{1-\epsilon}\times \dot{H}_x^{-\epsilon})}^{1-\theta_1}\\
\nonumber &\hspace{0.2in}+\lVert u_t(t_n)\rVert_{\dot{H}_x^{s_c-1}}^{\theta_2}\lVert (u,u_t)\rVert^{1-\theta_2}_{L_t^\infty(\mathbb{R};\dot{H}_x^{1-\epsilon}\times \dot{H}_x^{-\epsilon})}\\
\nonumber &\hspace{0.2in}+\lVert u(t_n)\rVert_{\dot{H}_x^{s_c}}^{4\theta_3}\lVert (u,u_t)\rVert_{L_t^\infty(\mathbb{R};\dot{H}_x^{1-\epsilon}\times\dot{H}_x^{-\epsilon})}^{4(1-\theta_3)}.
\end{align}
where in obtaining the inequality we used $\lVert u(t_n)\rVert_{L_x^4}\lesssim \lVert u(t_n)\rVert_{\dot{H}_x^\frac{d}{4}}$ and interpolation.

Letting $n\rightarrow\infty$ and applying ($\ref{l84}$) followed by the conservation of energy, we obtain
\begin{align*}
E(u_0,u_1)=0.
\end{align*}
Thus $u\equiv 0$, contradicting our assumption that $\lVert u\rVert_{L_{t,x}^{d+1}}=\infty$.
\end{proof}

Based on the previous lemma and the finite speed of propagation in the sense of Lemma $\ref{l37}$, we now prove the following estimate for $x(t)$:
\begin{lemma}
\label{l85}
Suppose that $u:\mathbb{R}\times \mathbb{R}^d\rightarrow\mathbb{R}$ is a solution to (NLW) which satisfies the properties of a soliton-like solution stated in Theorem $\ref{l15}$.  Then there exists $C>0$ such that for every $t\geq 0$ we have,
\begin{align*}
|x(t)-x(0)|\leq C+t.
\end{align*}
\end{lemma}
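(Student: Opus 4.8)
The plan is to argue directly, in the spirit of \cite{KillipVisanSupercriticalNLS,KillipVisanSupercriticalNLW3D}. The crux is that for a soliton-like solution $N(t)\equiv 1$, so almost periodicity confines a fixed fraction of the $L_x^d\times L_x^{d/2}$ ``mass'' of $(u(t),u_t(t))$ to a ball of a \emph{fixed} radius around $x(t)$ at every time, whereas the finite speed of propagation (Lemma \ref{l37}, applied with base point $0$) forces almost all of that mass to lie inside the light cone $\{|y-x(0)|\le 2R+t\}$ at time $t$. Comparing the two statements will give $|x(t)-x(0)|\lesssim 1+t$.

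First I would fix the constants, in this order. By Lemma \ref{l82}, there is $\eta_*>0$ with $\int_{\mathbb{R}^d}|u(t,y)|^d+|u_t(t,y)|^{d/2}\,dy\ge \eta_*$ for all $t$. Applying Remark \ref{l13} with $\eta=\eta_*/2$ and using $N(t)\equiv 1$ produces a $t$-independent radius $\rho:=c_1(\eta_*/2)$ such that
\begin{align*}
\int_{|y-x(t)|\le \rho}|u(t,y)|^d+|u_t(t,y)|^{d/2}\,dy\ge \tfrac{\eta_*}{2}\qquad\text{for all }t\in\mathbb{R}.
\end{align*}
Next, using the Sobolev embeddings $\dot H_x^{s_c}\hookrightarrow L_x^d$ and $\dot H_x^{s_c-1}\hookrightarrow L_x^{d/2}$ (recall $s_c=\tfrac{d-2}{2}$), I would choose $\epsilon>0$ so small that $\|(f,g)\|_{\dot H_x^{s_c}\times\dot H_x^{s_c-1}}<\epsilon$ forces $\|f\|_{L_x^d}^d+\|g\|_{L_x^{d/2}}^{d/2}<\eta_*/4$, and then take $R>0$ to be the radius furnished by Lemma \ref{l37} for this $\epsilon$. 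All four of $\eta_*,\rho,\epsilon,R$ then depend only on $u$ and $d$.

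Now fix $t\ge 0$ and apply Lemma \ref{l37} with base point $0$: since $N(0)=1$, the translated data is $(u(0,x(0)+\cdot),u_t(0,x(0)+\cdot))$, we have $v^{(0)}(\tau,x)=u(\tau,x(0)+x)$, and the truncated solution $v_R^{(0)}$ is global with $\|(v_R^{(0)}(\tau),\partial_\tau v_R^{(0)}(\tau))\|_{L_\tau^\infty(\mathbb{R};\dot H_x^{s_c}\times\dot H_x^{s_c-1})}<\epsilon$ and $v^{(0)}(r,\cdot)=v_R^{(0)}(r,\cdot)$ on $\{|x|\ge 2R+r\}$ for $r\ge 0$; the same identity holds for the time derivatives, by finite speed of propagation, exactly as used implicitly in the proof of Lemma \ref{l49}. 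Changing variables $y=x(0)+x$ and invoking the Sobolev bound on $(v_R^{(0)}(t),\partial_t v_R^{(0)}(t))$ then gives
\begin{align*}
\int_{|y-x(0)|\ge 2R+t}|u(t,y)|^d+|u_t(t,y)|^{d/2}\,dy\le \|v_R^{(0)}(t)\|_{L_x^d}^d+\|\partial_t v_R^{(0)}(t)\|_{L_x^{d/2}}^{d/2}<\tfrac{\eta_*}{4}.
\end{align*}
Comparing this with the lower bound $\ge\eta_*/2$ over $\{|y-x(t)|\le\rho\}$ shows that these two regions cannot be disjoint (otherwise $\{|y-x(t)|\le\rho\}\subset\{|y-x(0)|\ge 2R+t\}$ and the mass on the ball would be $<\eta_*/4<\eta_*/2$, a contradiction). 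Picking $y$ in the intersection yields $|x(t)-x(0)|\le|x(t)-y|+|y-x(0)|\le\rho+2R+t$, so the claim holds with $C=\rho+2R$.

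I expect the only real obstacle to be bookkeeping: keeping the chain of dependencies $\eta_*\rightsquigarrow\rho\rightsquigarrow\epsilon\rightsquigarrow R$ strictly in this order, so that $\rho$ and $R$ are genuinely uniform in $t$, together with the (standard) observation that the matching identity \eqref{l39} of Lemma \ref{l37} also propagates to the time derivative and not just to $u$ itself. The remaining steps reduce to a routine change of variables and the Sobolev embedding.
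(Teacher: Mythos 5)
Your proposal is correct and follows essentially the same route as the paper: the mass lower bound from Lemma \ref{l82}, concentration near $x(t)$ via Remark \ref{l13}, smallness outside the light cone based at $x(0)$ via Lemma \ref{l37} (using $N(0)=1$) together with the Sobolev embedding, and then the triangle inequality applied to a point in the nonempty intersection of the concentration ball with the interior of the cone. The only differences are organizational (you fix the smallness parameter up front, the paper fixes it at the end), and your explicit remark that the matching identity \eqref{l39} propagates to the time derivative is a point the paper uses implicitly.
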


\begin{proof}
We argue in a similar spirit to \cite{KillipVisanSupercriticalNLW3D}.  Fix $\eta>0$ to be determined later in the argument.  Let us first note that by Remark $\ref{l13}$ there exists $c(\eta)>0$ such that
\begin{align}
\int_{|x-x(t)|>c(\eta)} |u(t,x)|^ddx+\int_{|x-x(t)|>c(\eta)} |u_t(t,x)|^\frac{d}{2}dx\leq \eta\label{l86}
\end{align}
for all $t\in\mathbb{R}$.

Next, applying Lemma $\ref{l37}$ with $\epsilon=\eta$ and $t=0$, we choose $R>0$ such that for all $r\in\mathbb{R}$ and $x\in \{x\in\mathbb{R}^d:|x|\geq 2R+r\}$ we have 
\begin{align*}
v^{(0)}(r,x)=v_{R}^{(0)}(r,x)
\end{align*}
where $v^{(t)}$ and $v_{R}^{(t)}$ are defined as in Lemma $\ref{l37}$.

\noindent Then, for all $t\in\mathbb{R}$, we obtain
\begin{align}
\nonumber &\int_{|x-x(0)|>2R+t} |u(t,x)|^ddx+\int_{|x-x(0)|>2R+t} |u_t(t,x)|^\frac{d}{2}dx\\
\nonumber &\hspace{0.2in}=\int_{|x|>2R+t} |u(t,x+x(0))|^ddx+\int_{|x|>2R+t} |u_t(t,x+x(0))|^\frac{d}{2}dx\\
\nonumber &\hspace{0.2in}=\int_{|x|>2R+t} |v^{(0)}(t,x)|^ddx+\int_{|x|>2R+t} |\partial_t v^{(0)}(t,x)|^\frac{d}{2}dx\\
\nonumber &\hspace{0.2in}=\int_{|x|>2R+t} |v_{R}^{(0)}(t,x)|^ddx+\int_{|x|>2R+t} |\partial_t v_{R}^{(0)}(t,x)|^\frac{d}{2}dx\\
\nonumber &\hspace{0.2in}\leq \int_{\mathbb{R}^d} |v_{R}^{(0)}(t,x)|^ddx+\int_{\mathbb{R}^d} |\partial_tv_{R}^{(0)}(t,x)|^\frac{d}{2}dx\\
\nonumber &\hspace{0.2in}\leq \left(\int_{\mathbb{R}^d} ||\nabla|^{s_c}v_{R}^{(0)}(t,x)|^2dx\right)^{d/2}+\left(\int_{\mathbb{R}^d} ||\nabla|^{s_c-1}\partial_t v_{R}^{(0)}(t,x)|^2dx\right)^{d/4}\\
\nonumber &\hspace{0.2in}\leq (C\eta)^d+(C\eta)^{d/2}\\
&\hspace{0.2in}\leq C,\eta\label{l87}
\end{align}
where in the second to last inequality we used the smallness given by ($\ref{l38}$) in Lemma $\ref{l37}$.

Combining the bounds $(\ref{l86})$ and $(\ref{l87})$, we obtain 
\begin{align}
\nonumber &\int_{\{x:|x-x(t)|\geq c(\eta)\}\cup \{x:|x-x(0)|\geq 2R+t\}} |u(t,x)|^d+|u_t(t,x)|^\frac{d}{2}dx\\
\nonumber &\hspace{0.2in}\leq \int_{|x-x(t)|\geq c(\eta)} |u(t,x)|^d+|u_t(t,x)|^\frac{d}{2}dx+\int_{|x-x(0)|\geq 2R+t} |u(t,x)|^d+|u_t(t,x)|^\frac{d}{2}dx\\
\label{l88}&\hspace{0.2in}\leq (1+C)\eta.\\
\intertext{
for all $t\geq0$.  We now determine $\eta$.  Note that by Lemma $\ref{l82}$ together with the assumption $(u,u_t)\in L_t^\infty(\mathbb{R};\dot{H}_x^{s_c}\times\dot{H}_x^{s_c-1})$, we have
\begin{align*}
0<\inf_{t\in\mathbb{R}} \bigg(\lVert u(t)\rVert_{L_x^d}^d+\lVert u_t(t)\rVert_{L_x^\frac{d}{2}}^\frac{d}{2}\bigg)<\infty,
\end{align*}
so that we may choose $\eta>0$ such that
\begin{align*}
\eta<\frac{1}{4(1+C)}\inf_{t\in\mathbb{R}}\bigg(\lVert u(t)\rVert_{L_x^d}^d+\lVert u_t(t)\rVert_{L_x^\frac{d}{2}}^\frac{d}{2}\bigg).
\end{align*}
Thus invoking this choice of $\eta$ in ($\ref{l88}$), we have for all $t\geq 0$,
}
\nonumber &\int_{\{x:|x-x(t)|<c(\eta)\}\cap \{x:|x-x(0)|<2R+t\}} |u(t,x)|^d+|u_t(t,x)|^\frac{d}{2}dx\\
\nonumber &\hspace{0.2in}=\int_{\mathbb{R}^d} |u(t,x)|^d+|u_t(t,x)|^\frac{d}{2}dx\\
\nonumber &\hspace{0.3in}-\int_{\{x:|x-x(t)|\geq c(\eta)\}\cup \{x:|x-x(0)|\geq 2R+t\}} |u(t,x)|^d+|u_t(t,x)|^\frac{d}{2}dx\\
\nonumber &\hspace{0.2in}\geq \inf_{t\in\mathbb{R}}\left(\lVert u(t)\rVert_{L_x^d}^d+\lVert u_t(t)\rVert_{L_x^\frac{d}{2}}^\frac{d}{2}\right)-(1+C)\eta\\
\nonumber &\hspace{0.2in}\geq \left(1-\frac{1}{4}\right)\inf_{t\in\mathbb{R}}\left(\lVert u(t)\rVert_{L_x^d}^d+\lVert u_t(t)\rVert_{L_x^\frac{d}{2}}^\frac{d}{2}\right)\\
\nonumber &\hspace{0.2in}>0.
\end{align}

Thus, we conclude that for all $t\geq 0$, the set
\begin{align*}
X(t)=\{x:|x-x(t)|<c(\eta)\}\cap \{x:|x-x(0)|<2R+t\}\neq \emptyset.
\end{align*}
We may then choose $x\in X(t)$, $t\geq 0$, so that
\begin{align*}
|x(t)-x(0)|&\leq |x(t)-x|+|x-x(0)|\leq c(\eta)+2R+t.
\end{align*}
Noting that $\eta$ and $R$ are independent of $t$, we conclude that there exists $C>0$ such that for all $t\geq 0$ we have
\begin{align*}
|x(t)-x(0)|\leq C+t
\end{align*}
as desired.
\end{proof}

The second step in obtaining the lower bound on $(\ref{l81})$ is the following lemma which employs the almost periodicity as well as the dispersive estimate.
\begin{lemma}
\label{l89}
Suppose that $u:\mathbb{R}\times\mathbb{R}^d\rightarrow\mathbb{R}$ is a solution to (NLW) which satisfies the properties of a soliton-like solution stated in Theorem $\ref{l15}$.  Then there exists $R>0$ and $c>0$ such that for every $s\in \mathbb{R}$,
\begin{align}
\int_{s}^{s+1}\int_{|x-x(t)|\leq R} |u(t,x)|^4dxdt\geq c.
\label{l90}
\end{align}
\end{lemma}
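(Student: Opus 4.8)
The plan is to argue by contradiction, exploiting the compactness coming from almost periodicity with $N\equiv1$ together with the local theory. Suppose $(\ref{l90})$ fails; then for every $R,c>0$ there is a time for which it fails, so taking $R=n$, $c=\tfrac1n$ produces $s_n\in\mathbb{R}$ with $\int_{s_n}^{s_n+1}\int_{|x-x(t)|\le n}|u(t,x)|^4\,dx\,dt<\tfrac1n$. Set $v_n(t,x):=u(s_n+t,\,x(s_n)+x)$, which solves (NLW) and has $v_n(0,\cdot)=u(s_n,\,x(s_n)+\cdot)$, i.e.\ the element of the set $K$ of $(\ref{l12})$ attached to time $s_n$. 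The first step is to transfer the localization about $x(t)$ to one about the fixed centre $x(s_n)$: running the finite-speed-of-propagation argument of Lemma $\ref{l85}$ with base time $s_n$ in place of $0$ — the constants in Lemma $\ref{l37}$ and Remark $\ref{l13}$ being uniform in the base time — produces $D<\infty$, independent of $n$, with $\sup_{t\in[s_n,s_n+1]}|x(t)-x(s_n)|\le D$. Hence $\{|x-x(s_n)|\le R_0\}\subset\{|x-x(t)|\le R_0+D\}$ for $t\in[s_n,s_n+1]$, so that $\int_0^1\int_{|x|\le R_0}|v_n(t,x)|^4\,dx\,dt\le\int_{s_n}^{s_n+1}\int_{|x-x(t)|\le n}|u|^4<\tfrac1n$ once $R_0+D\le n$; in particular, for each fixed $R_0>0$ we have $\int_0^1\int_{|x|\le R_0}|v_n(t,x)|^4\,dx\,dt\to0$.

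Next, by precompactness of $K$ one may pass to a subsequence with $(v_n(0),\partial_t v_n(0))\to(\phi_0,\phi_1)$ in $\dot H_x^{s_c}\times\dot H_x^{s_c-1}$; let $\phi$ be the corresponding solution to (NLW). Since $(d+1,d+1)$ is $\dot H_x^{s_c}$-wave admissible, the Strichartz estimate gives $\|\mathcal W(\cdot)(v_n(0),\partial_t v_n(0))-\mathcal W(\cdot)(\phi_0,\phi_1)\|_{L_{t,x}^{d+1}}\to0$; picking $\delta_1>0$ so small that $\|\mathcal W(\cdot)(\phi_0,\phi_1)\|_{L_{t,x}^{d+1}([0,\delta_1]\times\mathbb{R}^d)}<\tfrac12\delta_0$, with $\delta_0$ the constant of Theorem $\ref{l22}$ for the uniform bound $A$ on the norms of the data, Theorem $\ref{l22}$ shows $\phi$ and all $v_n$ (for $n$ large) exist on $[0,\delta_1]$ with uniformly bounded $L_{t,x}^{d+1}([0,\delta_1]\times\mathbb{R}^d)$ norms, and the stability theorem (with approximate solution $\phi$ and error $e\equiv0$) yields $v_n\to\phi$ in $L_{t,x}^{d+1}([0,\delta_1]\times\mathbb{R}^d)$. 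As $d\ge6$, Hölder on the ball gives $\|f\|_{L_x^4(|x|\le R_0)}\lesssim_{R_0}\|f\|_{L_x^{d+1}}$, whence $\int_0^{\delta_1}\int_{|x|\le R_0}|v_n-\phi|^4\to0$ and therefore $\int_0^{\delta_1}\int_{|x|\le R_0}|\phi(t,x)|^4\,dx\,dt=\lim_n\int_0^{\delta_1}\int_{|x|\le R_0}|v_n|^4=0$ for every $R_0$. Consequently $\phi\equiv0$ on $[0,\delta_1]$, so $(\phi_0,\phi_1)=(\phi(0),\phi_t(0))=(0,0)$.

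To close the argument, apply Lemma $\ref{l82}$ to $u$: together with the Sobolev embeddings $\dot H_x^{s_c}\hookrightarrow L_x^d$ and $\dot H_x^{s_c-1}\hookrightarrow L_x^{d/2}$, which upgrade the above convergence to $L_x^d\times L_x^{d/2}$, it gives $\|\phi_0\|_{L_x^d}^d+\|\phi_1\|_{L_x^{d/2}}^{d/2}=\lim_n\big(\|u(s_n)\|_{L_x^d}^d+\|u_t(s_n)\|_{L_x^{d/2}}^{d/2}\big)\ge\eta>0$, contradicting $(\phi_0,\phi_1)=(0,0)$. (An alternative, more computational route, avoiding the compactness extraction, would estimate the Duhamel integral over $[s,s+1]$ directly via the Strichartz estimate and the fractional Leibniz rule, reducing the lemma to a pointwise-in-time lower bound for $\|u(t)\|_{L_x^{2d/(d-2)}}$ at a well-chosen $t\in[s,s+1]$ obtained from the dispersive estimate $(\ref{l5})$.)

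I expect the main obstacle to lie in the second paragraph: securing an existence time $\delta_1$ and a strong convergence $v_n\to\phi$ that are uniform in $n$, in a topology strong enough to control $L_{t,x}^4$ on balls — here the interplay of Theorem $\ref{l22}$, the stability theorem, and the admissibility bookkeeping must be handled with care — and, secondarily, in the first-paragraph reduction of the moving localization $|x-x(t)|\le R$ to the fixed one $|x-x(s_n)|\le R$, which hinges on the base-time independence of the constants in Lemma $\ref{l37}$.
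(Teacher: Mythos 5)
Your proof is correct, but it takes a genuinely different route from the paper's. The paper first establishes a measure-theoretic claim -- that for some $C_1>0$ the set of $t\in[s,s+1]$ with $\int|u(t)|^{2d/(d-2)}dx\geq C_1$ has measure at least $C_1$ -- by extracting, from a putative bad sequence $s_n$, a strong $\dot{H}_x^{s_c}\times\dot{H}_x^{s_c-1}$ limit and a weak $\dot{H}_x^1\times L_x^2$ limit (the latter available only through the additional decay of Theorem \ref{l56}), and then kills the weak limit by feeding the a.e.-in-$\tau$ vanishing of $\|u(s_n+\tau)\|_{L_x^{2d/(d-2)}}$ through the Duhamel formula and the dispersive estimate (this is where Proposition \ref{l110} is needed); it then converts $L_x^{2d/(d-2)}$ on a ball around $x(t)$ into $L_x^4$ by two interpolations that again use Theorem \ref{l56}. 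You instead negate the conclusion directly, freeze the spatial centre at $x(s_n)$ over the unit time window via the finite-speed-of-propagation bound of Lemma \ref{l85}, and use precompactness of $K$ plus the local well-posedness and stability theorems to show the limiting profile has vanishing $L_{t,x}^4$ norm on every ball of a short time slab, hence is identically zero -- contradicting Lemma \ref{l82} (or, just as well, the blow-up hypothesis via Theorem \ref{l22} with small data). Your argument is softer and avoids the dispersive estimate, the weak-continuity appendix, and (within this lemma) Theorem \ref{l56} entirely; its price is that you must rerun Lemma \ref{l85} with base time $s_n$ and check that its constant is uniform in the base time -- true, since the constants of Lemma \ref{l37} and Remark \ref{l13} are uniform over $t\in I$, but strictly a (routine) strengthening of the lemma as stated -- and you must be careful, as you are, that the stability theorem is applied on a time interval chosen from the limit profile so that all hypotheses hold uniformly in $n$. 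Both proofs are valid; the paper's yields the quantitatively sharper intermediate statement (\ref{l91}), which is not needed elsewhere.
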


\begin{proof}
We argue in a similar manner as in \cite{KillipVisanSupercriticalNLW3D}.  As a first step, we claim that there exists $C_1>0$ such that for every $s\in\mathbb{R}$,
\begin{align}
\label{l91}
\bigg|\bigg\{t\in [s,s+1]:\int_{\mathbb{R}^d} |u(t)|^\frac{2d}{d-2}dx\geq C_1\bigg\}\bigg|\geq C_1.
\end{align}

To this end, suppose to the contrary that the claim failed.  Then there exists a sequence of times $\{s_n\}\subset\mathbb{R}$ such that for every $n\in\mathbb{N}$,
\begin{align*}
\bigg|\bigg\{\tau\in [0,1]:\int_{\mathbb{R}^d} |u(s_{n}+\tau)|^\frac{2d}{d-2}dx\geq \frac{1}{n}\bigg\}\bigg|<\frac{1}{n}.
\end{align*}
This in turn implies that the sequence $g_n:[0,1]\rightarrow\mathbb{R}$ defined by 
\begin{align*}
g_n(\tau)=\int_{\mathbb{R}^d} |u(s_n+\tau)|^\frac{2d}{d-2}dx 
\end{align*}
converges to zero in measure as $n\rightarrow\infty$.  We next extract a subsequence (still labeled $s_n$) such 
that 
\begin{align}
\label{l92}
\int_{\mathbb{R}^d} |u(s_n+\tau)|^\frac{2d}{d-2}dx\rightarrow 0\quad\textrm{ for a.e. }\quad \tau\in [0,1]\quad\textrm{ as }\quad n\rightarrow\infty.
\end{align}

To continue, using the hypothesis that $u$ is a soliton-like solution together with the almost periodicity of $u$, we choose a further subsequence (still labeled $s_n$) and a pair $(f,g)\in \dot{H}_x^{s_c}\times\dot{H}_x^{s_c-1}$ such that
\begin{align}
\label{l93}
(u(s_n,x(s_n)+\cdot),u_t(s_n,x(s_n)+\cdot)\rightarrow (f,g)\quad \textrm{in}\quad \dot{H}_x^{s_c}\times\dot{H}_x^{s_c-1}.
\end{align}

\noindent Moreover, using the additional decay property (Theorem $\ref{l56}$) we observe that the sequence $\{(u(s_n,x(s_n)+\cdot),u_t(s_n,x(s_n)+\cdot))\}$ is bounded in $\dot{H}_x^1\times L_x^2$, and we therefore pass to another subsequence to find $(f',g')\in \dot{H}_x^1\times L_x^2$ such that 
\begin{align}
(u(s_n,x(s_n)+\cdot),u_t(s_n,x(s_n)+\cdot))\rightharpoonup (f',g')\quad\textrm{ weakly in }\quad\dot{H}_x^{1}\times L_x^{2}.
\label{l94}
\end{align}

Next, we show that we have $(f'(x),g'(x))=(0,0)$ for a.e. $x\in\mathbb{R}^d$.  To prove this, we begin by noting that it suffices to show 
\begin{align}
\label{l95}
\mathcal{W}(\tau)(f',g')(x)=0,\quad\textrm{ for a.e. }\quad \tau\in [0,1]\quad\textrm{ and a.e. }\quad x\in\mathbb{R}^d.
\end{align}
Indeed, if we assume ($\ref{l95}$), then in view of $\mathcal{W}(\tau)(f',g')\in C_\tau^0(\dot{H}_x^1)\cap C_\tau^1(L_x^2)$, we obtain
\begin{align*}
\lVert f'\rVert_{L_x^{\frac{2d}{d-2}}}&\lesssim \lVert f'\rVert_{\dot{H}_x^1}=\lim_{\tau\rightarrow 0} \lVert \mathcal{W}(\tau)(f',g')\rVert_{\dot{H}_x^1}=0,
\end{align*}
as well as
\begin{align*}
\lVert g'\rVert_{L_x^2}&=\lim_{\tau\rightarrow 0} \lVert \partial_\tau \mathcal{W}(\tau)(f',g')\rVert_{L_x^2}=\lim_{\tau\rightarrow 0}\lim_{h\rightarrow 0} \lVert \frac{1}{h}[\mathcal{W}(\tau+h)(f',g')-\mathcal{W}(\tau)(f',g')]\rVert_{L_x^2}=0.
\end{align*}

We now turn to verifying the assertion $(\ref{l95})$.  We first note that $(\ref{l94})$ yields $\mathcal{W}(\tau)(u(s_n),x(s_n)+\cdot),u_t(s_n,x(s_n)+\cdot))\rightharpoonup \mathcal{W}(\tau)(f',g')$ weakly in $L_x^\frac{2d}{d-2}$ for every $\tau\in \mathbb{R}$ (for a justification of this claim, we refer to Proposition \ref{l110} in Appendix A).  The weak lower semicontinuity of the norm then yields
\begin{align}
\lVert \mathcal{W}(\tau)(f',g')\rVert_{L_x^\frac{2d}{d-2}}&\leq \lim_{n\rightarrow\infty} \lVert \mathcal{W}(\tau)(u(s_n),u_t(s_n))\rVert_{L_x^\frac{2d}{d-2}} 
\label{l96}
\end{align}
for every $\tau\in \mathbb{R}$.

\noindent Fix $\tau\in [0,1]$.  Using the Duhamel formula, the dispersive estimate followed by Lemma $\ref{l19}$ twice, and the Sobolev embedding, we obtain for all $n\in\mathbb{N}$,
\begin{align}
\nonumber &\lVert \mathcal{W}(\tau)(u(s_n),u_t(s_n))\rVert_{L_x^\frac{2d}{d-2}}\\
\nonumber &\hspace{0.2in} \leq \lVert u(s_n+\tau)\rVert_{L_x^\frac{2d}{d-2}}+\int_{s_n}^{s_n+\tau} \lVert \frac{\sin((s_n+\tau-\tau')|\nabla|)}{|\nabla|}[u(\tau')]^3\rVert_{L_x^\frac{2d}{d-2}}d\tau'\\
\nonumber &\hspace{0.2in}\lesssim \lVert u(s_n+\tau)\rVert_{L_x^\frac{2d}{d-2}}+\int_{s_n}^{s_n+\tau} |s_n+\tau-\tau'|^{-\frac{d-1}{d}}\lVert |\nabla|^{\frac{1}{d}} [u(\tau')^3]\rVert_{L_x^\frac{2d}{d+2}}d\tau'\\
\nonumber &\hspace{0.2in}\lesssim \lVert u(s_n+\tau)\rVert_{L_x^\frac{2d}{d-2}}+\int_{s_n}^{s_n+\tau} |s_n+\tau-\tau'|^{-\frac{d-1}{d}}\lVert u(\tau')^2\rVert_{L_x^\frac{2d^2}{d^2-2}}\lVert |\nabla|^\frac{1}{d}u(\tau')\rVert_{L_x^\frac{d^2}{d+1}}d\tau'\\
\nonumber &\hspace{0.2in}\lesssim \lVert u(s_n+\tau)\rVert_{L_x^\frac{2d}{d-2}}+\int_{s_n}^{s_n+\tau} |s_n+\tau-\tau'|^{-\frac{d-1}{d}}\lVert u(\tau')\rVert^2_{L_x^\frac{4d^2}{d^2-2}}\lVert u(\tau')\rVert_{\dot{H}_x^{s_c}}d\tau'\\
\label{l97} &\hspace{0.2in}\lesssim \lVert u(s_n+\tau)\rVert_{L_x^\frac{2d}{d-2}}+\int_{s_n}^{s_n+\tau} |s_n+\tau-\tau'|^{-\frac{d-1}{d}}\lVert u(\tau')\rVert^2_{L_x^\frac{4d^2}{d^2-2}}d\tau'.
\end{align}

\noindent We estimate the above integral as follows:  Using interpolation, we deduce 
\begin{align}
\nonumber &\int_{s_n}^{s_n+\tau} |s_n+\tau-\tau'|^{-\frac{d-1}{d}}\lVert u(\tau')\rVert^2_{L_x^\frac{4d^2}{d^2-2}}d\tau'\\
\nonumber &\hspace{0.2in}=\int_0^\tau |\tau-\tau'|^{-\frac{d-1}{d}}\lVert u(s_n+\tau')\rVert^2_{L_x^\frac{4d^2}{d^2-2}}d\tau'\\
\nonumber &\hspace{0.2in}\lesssim \int_0^\tau |\tau-\tau'|^{-\frac{d-1}{d}}\lVert u(s_n+\tau')\rVert_{L_x^\frac{2d}{d-2}}^{2\theta}\lVert u(s_n+\tau')\rVert_{L_x^d}^{2(1-\theta)}d\tau'\\
\nonumber &\hspace{0.2in}\lesssim \int_0^\tau |\tau-\tau'|^{-\frac{d-1}{d}}\lVert u(s_n+\tau')\rVert_{L_x^\frac{2d}{d-2}}^{2\theta}\lVert u\rVert_{L^\infty_t\dot{H}_x^{s_c}}^{2(1-\theta)}d\tau'\\
&\hspace{0.2in}\lesssim \int_0^\tau |\tau-\tau'|^{-\frac{d-1}{d}}\lVert u(s_n+\tau')\rVert_{L_x^\frac{2d}{d-2}}^{2\theta}d\tau'
\end{align}
for some $\theta\in (0,1)$.  Then, by virtue of Theorem $\ref{l56}$ and ($\ref{l92}$), the dominated convergence theorem yields
\begin{align}
\int_0^\tau |\tau-\tau'|^{-\frac{d-1}{d}}\lVert u(s_n+\tau')\rVert_{L_x^\frac{2d}{d-2}}^{2\theta}d\tau'\rightarrow 0.
\label{l98}
\end{align}

\noindent Thus appealing to ($\ref{l92}$) once again, together with ($\ref{l98}$), we use $(\ref{l97})$ to obtain
\begin{align*}
\lVert \mathcal{W}(s)(u(s_n),u_t(s_n))\rVert_{L_x^\frac{2d}{d-2}}\rightarrow 0
\end{align*}
which in turn gives the claim ($\ref{l95}$) so that $f'(x)=g'(x)=0$ a.e. as claimed.

Now, note that by combining ($\ref{l93}$) and ($\ref{l94}$) with the Sobolev embedding and uniqueness of weak limits in $L_x^p$ spaces, we obtain $(f(x),g(x))=(f'(x),g'(x))$ for a.e. $x\in\mathbb{R}^d$.  Thus, using $(\ref{l93})$ with $f(x)=g(x)=0$ for a.e. $x\in\mathbb{R}^d$, we may choose $n$ so that $\lVert (u(s_n,x(s_n)+\cdot),u_t(s_n,x(s_n)+\cdot))\rVert_{\dot{H}_x^{s_c}\times \dot{H}_x^{s_c-1}}$ is arbitrarily small.  The local theory then gives $\lVert u\rVert_{L_{t,x}^{d+1}}<\infty$, contradicting our hypothesis that $u$ is a blow-up solution.  Thus ($\ref{l91}$) holds as desired.

Our second step is to adjust the domain of integration in ($\ref{l91}$).  To this end, let $C_1$ be as in ($\ref{l91}$).  Fix $\eta>0$ to be determined later in the argument and let $s\in\mathbb{R}$ be given.  Then, by the almost periodicity of $u$, we may choose $C_2(\eta)>0$ such that
\begin{align*}
\lVert u(t)\rVert_{L_x^d(|x-x(t)|\geq C_2(\eta))}\leq \eta^\frac{1}{d}.
\end{align*}
Let $\epsilon>0$ be as in Theorem $\ref{l56}$.  Using interpolation followed by the Sobolev embedding, we have
\begin{align}
\nonumber \lVert u(t)\rVert_{L_x^\frac{2d}{d-2}(|x-x(t)|\geq C_2(\eta))}&\leq \lVert u(t)\rVert_{L_x^d(|x-x(t)|\geq C_2(\eta))}^{\gamma}\lVert u(t)\rVert_{L_x^\frac{2d}{d-2(1-\epsilon)}(\mathbb{R}^d)}^{1-\gamma}\\
\nonumber &\leq C\lVert u(t)\rVert_{L_x^d(|x-x(t)|\geq C_2(\eta))}^{\gamma}\lVert u(t)\rVert_{\dot{H}_x^{1-\epsilon}(\mathbb{R}^d)}^{1-\gamma}\\
&\leq C\eta^\frac{\gamma}{d}\label{l99}
\end{align}
for some $\gamma\in (0,1)$, where we note that $d\geq 6$ yields $\frac{2d}{d-2(1-\epsilon)}<\frac{2d}{d-2}<d$.

Choose $\eta$ small enough so that $(C\eta^\frac{\gamma}{d})^\frac{2d}{d-2}<\frac{C_1}{2}$.  Then for all $t\in [s,s+1]$, $\int_{\mathbb{R}^d} |u(t,x)|^\frac{2d}{d-2}\geq C_1$ implies
\begin{align*}
\int_{|x-x(t)|\leq C_2(\eta)} |u(t,x)|^\frac{2d}{d-2}dx&=\int_{\mathbb{R}^d} |u(t,x)|^\frac{2d}{d-2}dx-\int_{|x-x(t)|\geq C_2(\eta)} |u(t,x)|^\frac{2d}{d-2}dx\\
&\geq \frac{C_1}{2}.
\end{align*}
Thus, we obtain from $(\ref{l91})$ that for all $s\in\mathbb{R}$
\begin{align}
\label{l100}
\bigg|\bigg\{ t\in [s,s+1]:\int_{|x-x(t)|\leq C_2(\eta)} |u(t,x)|^\frac{2d}{d-2}dx\geq \frac{C_1}{2}\bigg\}\bigg|\geq C_1.
\end{align}
from which we settle the second step.

To conclude the proof, we use ($\ref{l100}$) to obtain the desired estimate ($\ref{l90}$).  Arguing similarly as in ($\ref{l99}$), we obtain 
\begin{align*}
\lVert u(t)\rVert_{L_x^\frac{2d}{d-2}(|x-x(t)|\leq C_2(\eta))}&\leq \lVert u(t)\rVert_{L_x^4(|x-x(t)|\leq C_2(\eta))}^{\theta}\lVert u(t)\rVert_{L_x^\frac{2d}{d-2(1-\epsilon)}(|x-x(t)|\leq C_2(\eta))}^{1-\theta}\\
&\leq \lVert u(t)\rVert_{L_x^4(|x-x(t)|\leq C_2(\eta))}^{\theta}\lVert u(t)\rVert_{L_x^\frac{2d}{d-2(1-\epsilon)}(\mathbb{R}^d)}^{1-\theta}\\
&\leq C\lVert u(t)\rVert_{L_x^4(|x-x(t)|\leq C_2(\eta))}^{\theta}\lVert u(t)\rVert_{\dot{H}_x^{1-\epsilon}(\mathbb{R}^d)}^{1-\theta}\\
&\leq C\lVert u(t)\rVert_{L_x^4(|x-x(t)|\leq C_2(\eta))}^\theta
\end{align*}
for some $\theta\in (0,1)$.

\noindent Then for all $s\in\mathbb{R}$ we have
\begin{align*}
\int_{s}^{s+1} \int_{|x-x(t)|\leq C_2(\eta)} |u(t,x)|^4dxdt&=\int_{s}^{s+1} \lVert u(t)\rVert_{L_x^4(|x-x(t)|\leq C_2(\eta)}^4dt\\
&\geq \int_{s}^{s+1}C^{-4/\theta}\lVert u(t)\rVert_{L_x^\frac{2d}{d-2}(|x-x(t)|\leq C_2(\eta))}^{4/\theta}dt\\
&\geq C_1\cdot C^{-4/\theta}\left(\frac{C_1}{2}\right)^\frac{4(d-2)}{2d\theta}
\end{align*}
where we used $(\ref{l100})$ to obtain the last inequality.  Since $C_1$, $C_2$ and $C$ are independent of $s$, this yields the desired estimate ($\ref{l90}$).
\end{proof}

Having shown the two steps we outlined above, we are now ready to address the proof of the main proposition of this section, which precludes the soliton-like scenario.
\begin{proposition}
Assume $d\geq 6$.  Then there is no $u:\mathbb{R}\times \mathbb{R}^d\rightarrow\mathbb{R}$ such that $u$ solves (NLW) and satisfies the properties of a soliton-like solution in the sense of Theorem $\ref{l15}$.
\end{proposition}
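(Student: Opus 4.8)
The plan is a proof by contradiction in the spirit of \cite{KillipVisanSupercriticalNLS,KillipVisanSupercriticalNLW3D}: assuming a soliton-like solution $u$ exists, I will derive incompatible upper and lower bounds for the Morawetz quantity $\int_{\mathbb{R}}\int_{\mathbb{R}^d}|u(t,x)|^4|x|^{-1}\,dx\,dt$. Throughout, $N(t)=1$ for all $t$, so in particular $\inf_{t\in\mathbb{R}}N(t)\geq 1$ and Theorem~\ref{l56} is available.

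\textbf{Upper bound.} By Theorem~\ref{l56} we have $(u,u_t)\in L_t^\infty(\mathbb{R};\dot{H}_x^1\times L_x^2)$. Since $1<\tfrac{d}{4}<s_c$ for $d\geq 6$, interpolating between $\dot{H}_x^1$ and $\dot{H}_x^{s_c}$ (the latter coming from the soliton hypothesis) and using the Sobolev embedding $\dot{H}_x^{d/4}\hookrightarrow L_x^4$ gives $u\in L_t^\infty L_x^4$. Hence the energy
\begin{align*}
E(u,u_t)=\tfrac12\lVert\nabla u\rVert_{L_x^2}^2+\tfrac12\lVert u_t\rVert_{L_x^2}^2+\tfrac14\lVert u\rVert_{L_x^4}^4
\end{align*}
is finite, and by conservation it is a fixed constant in $t$. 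Applying the Morawetz estimate of Theorem~\ref{l7} on the interval $I=[0,T]$ then yields
\begin{align*}
\int_0^T\int_{\mathbb{R}^d}\frac{|u(t,x)|^4}{|x|}\,dx\,dt\leq CE(u,u_t),
\end{align*}
with the right-hand side independent of $T$.

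\textbf{Lower bound.} Let $R,c>0$ be as in Lemma~\ref{l89}, and set $C_1:=|x(0)|+C_0$, where $C_0$ is the constant of Lemma~\ref{l85}, so that $|x(t)|\leq C_1+t$ for all $t\geq 0$. For integers $s\geq 0$, if $t\in[s,s+1]$ and $|x-x(t)|\leq R$ then $|x|\leq|x(t)|+R\leq C_1+s+1+R$; combining this with Lemma~\ref{l89} gives
\begin{align*}
\int_s^{s+1}\int_{|x-x(t)|\leq R}\frac{|u(t,x)|^4}{|x|}\,dx\,dt&\geq\frac{1}{C_1+s+1+R}\int_s^{s+1}\int_{|x-x(t)|\leq R}|u(t,x)|^4\,dx\,dt\\
&\geq\frac{c}{C_1+s+1+R}.
\end{align*}
Summing over $s=0,\dots,\lfloor T\rfloor-1$ and restricting the Morawetz integral on $[0,T]$ to these time slabs and to the balls $\{|x-x(t)|\leq R\}$ (legitimate since the integrand is nonnegative), we obtain
\begin{align*}
\int_0^T\int_{\mathbb{R}^d}\frac{|u(t,x)|^4}{|x|}\,dx\,dt\geq\sum_{s=0}^{\lfloor T\rfloor-1}\frac{c}{C_1+s+1+R}\gtrsim c\log T\longrightarrow\infty
\end{align*}
as $T\to\infty$, which contradicts the uniform upper bound from the previous step. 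Hence no soliton-like solution exists.

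The argument involves no new ideas: all three required inputs --- finiteness of the energy (Theorem~\ref{l56}), the linear-in-time growth bound $|x(t)|\lesssim 1+t$ (Lemma~\ref{l85}), and the uniform lower bound on the localized $L_{t,x}^4$ mass over unit time intervals (Lemma~\ref{l89}) --- are already in hand. The only point requiring care is checking that the constants $R$, $c$, and $C_0$ are genuinely uniform in the time slab, so that the resulting sum $\sum_{s}(C_1+s)^{-1}$ diverges logarithmically rather than being artificially truncated; with that in place the proof is a direct assembly of the three estimates in the order above.
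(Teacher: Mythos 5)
Your proposal is correct and follows essentially the same route as the paper: an upper bound on the Morawetz quantity from finiteness of the energy (Theorem \ref{l56} plus Theorem \ref{l7}) against a logarithmically divergent lower bound assembled from Lemma \ref{l85} and Lemma \ref{l89} over unit time slabs. The only cosmetic difference is that you spell out the interpolation showing $u\in L_t^\infty L_x^4$, which the paper subsumes into its citation of Theorem \ref{l56}.
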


\begin{proof}
We argue as in \cite{KillipVisanSupercriticalNLW3D}.  Suppose for a contradiction that such a solution $u$ existed.  Fix $T>0$ and choose $C$ as in Lemma $\ref{l85}$ and $R$, $c$ as in Lemma $\ref{l89}$.  We then write,
\begin{align}
\int_0^T\int_{\mathbb{R}^d} \frac{|u(t,x)|^4}{|x|}dxdt&\geq \sum_{i=0}^{\lfloor T\rfloor -1} \int_i^{i+1}\int_{|x-x(t)|\leq R} \frac{|u(t,x)|^4}{|x|}dxdt.\label{l101}
\end{align}
Note that for all $i\in \{0,\cdots,\lfloor T\rfloor -1\}$ the conditions $t\in [i,i+1)$ and $x\in \{x\in\mathbb{R}^d:|x-x(t)|\leq R\}$ yield
\begin{align*}
|x|&\leq |x-x(t)|+|x(t)-x(0)|+|x(0)|\leq R+C+t+|x(0)|\leq C'+i.
\end{align*}
Using this bound,
\begin{align}
\nonumber (\ref{l101})&\geq \sum_{i=0}^{\lfloor T\rfloor -1}\frac{1}{C'+i}\int_i^{i+1}\int_{|x-x(t)|\leq R} |u(t,x)|^4dxdt\\
\nonumber &\geq c\sum_{i=0}^{\lfloor T\rfloor-1} \frac{1}{C'+i}\\
&\geq c\int_{0}^{\lfloor T\rfloor}\frac{1}{C'+t}dt.\label{l102}
\end{align}

Combining ($\ref{l101}$) with ($\ref{l102}$) and invoking Theorem $\ref{l7}$, we obtain 
\begin{align*}
c\log \big(\frac{C'+\lfloor T\rfloor}{C'}\big)\leq \int_0^T\int_{\mathbb{R}^d} \frac{|u(t,x)|^4}{|x|}dxdt\leq CE(u_0,u_1).
\end{align*}

Since $u$ is a soliton-like solution, by Theorem $\ref{l56}$ we have $E(u_0,u_1)<\infty$.  Noting that $T>0$ is arbitrary and the constants $C$, $R$ and $c$ are independent of $T$, letting $T$ tend to infinity, we derive a contradiction.  This completes the proof of the proposition.
\end{proof}

\section{Low-to-high frequency cascade solution}

In this section, we rule out the low-to-high frequency cascade scenario identified in Theorem $\ref{l15}$.

\begin{proposition}
There is no $u:\mathbb{R}\times \mathbb{R}^d\rightarrow\mathbb{R}$ such that $u$ solves (NLW), and satisfies the properties of a low-to-high frequency cascade solution in the sense of Theorem $\ref{l15}$.
\end{proposition}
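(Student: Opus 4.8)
The plan is to argue by contradiction, in the spirit of Killip--Visan \cite{KillipVisanSupercriticalNLS}, reducing matters to the conservation of energy --- the cascade structure now playing the role that the Morawetz identity played in the soliton-like case. Suppose such a low-to-high frequency cascade solution $u$ exists. Since $\inf_{t\in\mathbb{R}}N(t)\geq 1$, Theorem $\ref{l56}$ applies and gives $(u,u_t)\in L_t^\infty(\mathbb{R};\dot H_x^{1-\epsilon}\times\dot H_x^{-\epsilon})$ for some $\epsilon=\epsilon(d)>0$; in particular $(u,u_t)\in L_t^\infty(\mathbb{R};\dot H_x^1\times L_x^2)$, so the energy $E(u(t),u_t(t))$ is finite and, $u$ being a finite-energy solution of (NLW), conserved. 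Using $\limsup_{t\to\infty}N(t)=\infty$, fix a sequence $t_n$ with $N_n:=N(t_n)\to\infty$. I would then prove $E(u(t_n),u_t(t_n))\to 0$; since the energy is conserved this forces $E(u_0,u_1)=0$, and as each term of the energy is nonnegative this gives $\nabla u_0\equiv 0$ and $u_1\equiv 0$, hence $u_0\equiv 0$ (being in $\dot H_x^{s_c}$), hence $u\equiv 0$ by the uniqueness of solutions to (NLW), contradicting $\lVert u\rVert_{L_{t,x}^{d+1}(\mathbb{R}\times\mathbb{R}^d)}=\infty$.

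The heart of the matter is to establish $\lVert u(t_n)\rVert_{\dot H_x^1}\to 0$ and $\lVert u_t(t_n)\rVert_{L_x^2}\to 0$. For this I would split frequency space, for $n$ large, into the four pieces (i) $|\xi|\leq\delta$, (ii) $\delta\leq|\xi|\leq c_2(\eta)N_n$, (iii) $c_2(\eta)N_n\leq|\xi|\leq C(\eta)N_n$, and (iv) $|\xi|\geq C(\eta)N_n$, where $\delta>0$ is a small fixed number, $\eta>0$ a parameter chosen after $\delta$, $c_2(\eta)$ the constant of Remark $\ref{l13}$, and $C(\eta)\geq 1$ the constant in the definition of almost periodicity. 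For $\lVert u(t_n)\rVert_{\dot H_x^1}^2$: on (i) one writes $|\xi|^2=|\xi|^{2\epsilon}|\xi|^{2(1-\epsilon)}\leq\delta^{2\epsilon}|\xi|^{2(1-\epsilon)}$, so this part is $\lesssim\delta^{2\epsilon}\lVert u\rVert_{L_t^\infty\dot H_x^{1-\epsilon}}^2$ --- and it is exactly here that the additional decay from Theorem $\ref{l56}$ is indispensable, since the critical-norm almost periodicity degenerates as $|\xi|\to 0$; on (ii), which lies in the low-frequency set of ($\ref{l14}$), one uses $|\xi|^2\leq\delta^{2-2s_c}|\xi|^{2s_c}$ (valid as $s_c>1$ and $|\xi|\geq\delta$) to bound this part by $\delta^{2-2s_c}\eta$; on (iii) one uses $|\xi|^2\leq(c_2(\eta)N_n)^{2-2s_c}|\xi|^{2s_c}$ to bound it by $c_2(\eta)^{2-2s_c}N_n^{2-2s_c}\lVert u\rVert_{L_t^\infty\dot H_x^{s_c}}^2$, which tends to $0$ as $n\to\infty$ for fixed $\eta$ since $2-2s_c<0$; and on (iv) the high-frequency half of almost periodicity bounds it by $\eta$. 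Choosing $\delta$ small, then $\eta$ small, then $n$ large thus gives $\lVert u(t_n)\rVert_{\dot H_x^1}\to 0$. The same scheme, with the weight $1$ in place of $|\xi|^2$ --- using $1=|\xi|^{2\epsilon}|\xi|^{-2\epsilon}$ on (i) and $1=|\xi|^{2(1-s_c)}|\xi|^{2(s_c-1)}$ on (ii)--(iv), and with $\dot H_x^{-\epsilon}$ and $\dot H_x^{s_c-1}$ in place of $\dot H_x^{1-\epsilon}$ and $\dot H_x^{s_c}$ --- gives $\lVert u_t(t_n)\rVert_{L_x^2}\to 0$.

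For the potential-energy term I would interpolate: from $\lVert u(t_n)\rVert_{\dot H_x^1}\to 0$ and the uniform bound $\lVert u(t_n)\rVert_{\dot H_x^{s_c}}\lesssim 1$ one gets $\lVert u(t_n)\rVert_{\dot H_x^{d/4}}\to 0$ (note $1\leq\tfrac{d}{4}\leq s_c$ for $d\geq 6$), and the Sobolev embedding $\dot H_x^{d/4}\hookrightarrow L_x^4$ then gives $\lVert u(t_n)\rVert_{L_x^4}^4\to 0$. Putting the three limits together yields $E(u(t_n),u_t(t_n))\to 0$, which finishes the proof. The main obstacle I anticipate is the careful bookkeeping in the four-region estimate: one must recognize that the very-low-frequency piece cannot be absorbed using the critical-norm almost periodicity and must instead be controlled by the additional decay estimate, and one must order the choices of $\delta$, $\eta$, and $n$ so that all four contributions are simultaneously small.
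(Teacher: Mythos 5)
Your proposal is correct and follows essentially the same route as the paper: show that $E(u(t_n),u_t(t_n))\to 0$ along a sequence with $N(t_n)\to\infty$ by splitting frequency space at $c_2(\eta)N(t_n)$, controlling the low frequencies via the almost-periodicity bound (\ref{l14}) combined with the additional decay $(u,u_t)\in L_t^\infty(\dot H_x^{1-\epsilon}\times\dot H_x^{-\epsilon})$ from Theorem \ref{l56}, controlling the high frequencies by a Chebyshev-type argument with the $\dot H_x^{s_c}\times\dot H_x^{s_c-1}$ bound, handling the potential term by the same Sobolev-plus-interpolation estimate, and concluding via conservation of energy. The only (immaterial) difference is that you treat the low-frequency region with a further fixed cutoff $\delta$ and pointwise multiplier bounds, where the paper uses a single H\"older interpolation between the (\ref{l14}) smallness and the $\dot H_x^{1-\epsilon}\times\dot H_x^{-\epsilon}$ bound.
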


\begin{proof}
We proceed in a similar manner as in \cite{KillipVisanSupercriticalNLS}.  Assume to the contrary that there exists such a solution $u$.  Since $u$ is a low-to-high frequency cascade solution, we may choose a sequence $\{t_n\}\subset\mathbb{R}$ with $t_n\rightarrow\infty$ such that $N(t_n)\rightarrow\infty$ as $n\rightarrow\infty$.

Using ($\ref{l14}$) followed by H\"older's inequality with $u\in L_t^\infty(\dot{H}_x^{1-\epsilon}\times \dot{H}_x^{-\epsilon})$ for some $\epsilon>0$ (Theorem $\ref{l56}$) we have, for all $n\in\mathbb{N}$ and $\eta>0$,
\begin{align}
\nonumber &\int_{|\xi|\leq c(\eta)N(t_n)} |\xi|^{2}|\hat{u}(t_n,\xi)|^2+|\hat{u}_t(t_n,\xi)|^2d\xi\\
\nonumber &\lesssim \left(\int_{|\xi|\leq c(\eta)N(t_n)} |\xi|^{2s_c}|\hat{u}(t_n,\xi)|^2d\xi\right)^{\frac{\epsilon}{\epsilon+s_c-1}}\left(\int_{|\xi|\leq c(\eta)N(t_n)} |\xi|^{2(1-\epsilon)}|\hat{u}(t_n,\xi)|^2d\xi\right)^{\frac{s_c-1}{\epsilon+s_c-1}}\\
\nonumber &+\left(\int_{|\xi|\leq c(\eta)N(t_n)} |\xi|^{2(s_c-1)}|\hat{u}_t(t_n,\xi)|^2d\xi\right)^\frac{\epsilon}{\epsilon+s_c-1}\left(\int_{|\xi|\leq c(\eta)N(t_n)} |\xi|^{-2\epsilon} |\hat{u}_t(t_n,\xi)|^2d\xi\right)^\frac{s_c-1}{\epsilon+s_c-1}\\
\nonumber &\lesssim \eta^\frac{\epsilon}{\epsilon+s_c-1} \lVert (u,u_t)\rVert_{L^\infty(\mathbb{R};\dot{H}_x^{1-\epsilon}\times\dot{H}_x^{-\epsilon})}^\frac{2(s_c-1)}{\epsilon+s_c-1}\\
\label{l103}&\lesssim \eta^\frac{\epsilon}{\epsilon+s_c-1}.
\end{align}

On the other hand, by Chebyshev's inequality
\begin{align}
\nonumber &\int_{|\xi|\geq c(\eta)N(t_n)} |\xi|^2|\hat{u}(t_n,\xi)|^2+|\hat{u}_t(t_n,\xi)|^2d\xi\\
\nonumber &\hspace{0.2in}\leq [c(\eta)N(t)]^{-2(s_c-1)}\int_{\mathbb{R}^d} |\xi|^{2s_c}|\hat{u}(t_n,\xi)|^2+|\xi|^{2(s_c-1)}|\hat{u}_t(t_n,\xi)|^2d\xi\\
\nonumber &\hspace{0.2in}\lesssim [c(\eta)N(t_n)]^{-2(s_c-1)}\lVert (u,u_t)\rVert^2_{L^\infty(\mathbb{R};\dot{H}_x^{s_c}\times \dot{H}_x^{s_c-1})}\\
\label{l104}&\hspace{0.2in}\lesssim [c(\eta)N(t_n)]^{-2(s_c-1)}.
\end{align}
for all $\eta>0$ and $n\in\mathbb{N}$.

To continue, we now estimate the nonlinear term in the energy.  Note that using Sobolev's inequality followed by interpolation with $u\in L_t^\infty\dot{H}_x^{s_c}$, 
\begin{align}
\lVert u(t_n)\rVert_{L_x^4}&\lesssim \lVert |\nabla|^\frac{d}{4}u(t_n)\rVert_{L_x^2}\lesssim \lVert \nabla u(t_n)\rVert_{L_x^2}^{\frac{1}{2}}\lVert u\rVert_{L_t^\infty\dot{H}_x^{s_c}}^{\frac{1}{2}}\lesssim \lVert \nabla u(t_n)\rVert_{L_x^2}^{\frac{1}{2}}.\label{l105}
\end{align}

Combining ($\ref{l103}$), ($\ref{l104}$) and invoking Plancherel's theorem in ($\ref{l105}$),  we estimate the energy as
\begin{align}
\nonumber E(u(t_n),u_t(t_n))&\lesssim \int_{\mathbb{R}^d} |\xi|^2|\hat{u}(t_n)|^2d\xi+\int_{\mathbb{R}^d} |\hat{u}_t(t_n)|^2d\xi+\left(\int_{\mathbb{R}^d} |\xi|^2|\hat{u}(t_n)|^2d\xi\right)^{2}\\
\nonumber &\lesssim \int_{|\xi|\leq c(\eta)N(t_n)} |\xi|^2|\hat{u}(t_n)|^2d\xi+\int_{|\xi|\geq c(\eta)N(t_n)} |\xi|^2\hat{u}(t_n)|^2d\xi\\
\nonumber &+\int_{|\xi|\leq c(\eta)N(t_n)} |\hat{u}_t(t_n)|^2d\xi+\int_{|\xi|\geq c(\eta)N(t_n)} |\hat{u}_t(t_n)|^2d\xi\\
\nonumber &+\left[\int_{|\xi|\leq c(\eta)N(t_n)} |\xi|^2|\hat{u}(t_n)|^2d\xi+\int_{|\xi|\geq c(\eta)N(t_n)} |\xi|^2\hat{u}(t_n)|^2d\xi\right]^{2}\\
&\lesssim \eta^\frac{\epsilon}{\epsilon+s_c-1}+[c(\eta)N(t_n)]^{-2(s_c-1)}+\eta^{\frac{2\epsilon}{\epsilon+s_c-1}}+[c(\eta)N(t_n)]^{-4(s_c-1)},\label{l106}
\end{align}
for all $\eta>0$ and $n\in\mathbb{N}$.

Letting $n\rightarrow\infty$ in ($\ref{l106}$) and using the conservation of energy, now $N(t_n)\rightarrow \infty$ yields for all $\eta>0$,
\begin{align*}
E(u(0),u_t(0))&\lesssim \eta^\frac{\epsilon}{\epsilon+s_c-1}+\eta^{\frac{2\epsilon}{\epsilon+s_c-1}}.
\end{align*}
Taking $\eta\rightarrow 0$, we obtain $E(u(0),u_t(0))=0$.  Thus $u\equiv 0$ contradicting our assumption that $u$ is a blow-up solution.
\end{proof}

\appendix
\section{}
In this appendix, we present the detailed proofs of some observations that we used in the discussion above.  More precisely,
\subsection{The bound ($\ref{l65}$).}
Here, we present the argument used in obtaining the bound ($\ref{l75}$) from the decay estimate ($\ref{l64}$) in the proof of Lemma $\ref{l63}$.  We begin by recalling the following Gronwall inequality from \cite{KillipVisanSupercriticalNLW3D}.
\begin{lemma}
\label{l107} Let $\gamma,\gamma',C,\eta>0$ and $\rho\in (0,\gamma)$ be given such that
\begin{align*}
\eta\leq \frac{1}{4}\min\{1-2^{-\gamma},1-2^{-\gamma'},1-2^{\rho-\gamma}\}.
\end{align*}
Then for every bounded sequence $\{x_k\}\subset\mathbb{R}^+$ satisfying
\begin{align*}
x_k&\leq C2^{-\gamma k}+\eta\sum_{l=0}^{k-1} 2^{-\gamma (k-l)}x_l+\eta\sum_{l=k}^\infty 2^{-\gamma' |k-l|}x_l,
\end{align*}
we have
\begin{align*}
x_k&\leq (4C+\lVert x\rVert_{l^\infty})2^{-\rho k}.
\end{align*}
\end{lemma}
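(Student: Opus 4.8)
The plan is to read the hypothesis as a linear recursive inequality and solve it by a Neumann (geometric) series. I would introduce the entrywise nonnegative operator $\mathcal{T}$ on sequences $y=(y_k)_{k\ge 0}$ by
\[
(\mathcal{T}y)_k=\sum_{l=0}^{k-1}2^{-\gamma(k-l)}y_l+\sum_{l=k}^\infty 2^{-\gamma'(l-k)}y_l ,
\]
and set $\phi=(2^{-\gamma k})_{k\ge0}$, so that the hypothesis reads $x\le C\phi+\eta\,\mathcal{T}x$ entrywise. The smallness of $\eta$ is precisely what yields two quantitative properties of $\mathcal{T}$. First, summing the two geometric kernels shows that $\mathcal{T}$ is bounded on $\ell^\infty$ with $\|\eta\,\mathcal{T}\|_{\ell^\infty\to\ell^\infty}\le \eta\big(\tfrac{1}{1-2^{-\gamma}}+\tfrac{1}{1-2^{-\gamma'}}\big)\le\tfrac12$. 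Second, writing $\mathcal{C}=\{y:\,0\le y_k\le 2^{-\rho k}\text{ for all }k\}$, one has $(\mathcal{T}y)_k\le D\,2^{-\rho k}$ whenever $y\in\mathcal{C}$, with $D=\tfrac{1}{2^{\gamma-\rho}-1}+\tfrac{1}{1-2^{-(\gamma'+\rho)}}$; here $\rho<\gamma$ makes the first geometric sum converge, and the elementary bounds $2^{\gamma-\rho}-1\ge 1-2^{\rho-\gamma}$ (equivalently $(2^{(\gamma-\rho)/2}-2^{-(\gamma-\rho)/2})^2\ge0$) and $2^{-(\gamma'+\rho)}\le 2^{-\gamma'}$ give $\eta D\le\tfrac{\eta}{1-2^{\rho-\gamma}}+\tfrac{\eta}{1-2^{-\gamma'}}\le\tfrac12$.

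Granting these, the remainder is formal. Since $x\in\ell^\infty$ and $x\ge0$, $\mathcal{T}x$ is a well-defined nonnegative element of $\ell^\infty$, and monotonicity of $\mathcal{T}$ on nonnegative sequences lets one iterate $x\le C\phi+\eta\,\mathcal{T}x$ to obtain, for every $n\ge1$,
\[
x\le C\sum_{j=0}^{n-1}\eta^j\,\mathcal{T}^j\phi+\eta^n\,\mathcal{T}^n x .
\]
Because $\|\eta^n\,\mathcal{T}^n x\|_{\ell^\infty}\le 2^{-n}\|x\|_{\ell^\infty}\to 0$, letting $n\to\infty$ gives $x_k\le C\sum_{j\ge0}\eta^j(\mathcal{T}^j\phi)_k$ for each $k$. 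Since $\gamma>\rho$ we have $\phi\in\mathcal{C}$, so induction on $j$ using the cone bound gives $(\mathcal{T}^j\phi)_k\le D^j 2^{-\rho k}$, and therefore
\[
x_k\le C\Big(\sum_{j\ge0}(\eta D)^j\Big)2^{-\rho k}=\frac{C}{1-\eta D}\,2^{-\rho k}\le 2C\,2^{-\rho k}\le\big(4C+\|x\|_{\ell^\infty}\big)2^{-\rho k}.
\]

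The one genuine obstacle is the two-sided structure of the recursion: the estimate for $x_k$ involves $x_l$ for $l$ on both sides of $k$, and in particular the ``future'' terms $x_l$ with $l\ge k$, for which no decay is yet available; replacing those by the crude bound $\|x\|_{\ell^\infty}$ contributes a non-decaying constant and prevents a plain induction on $k$ from closing. The Neumann-series device circumvents this: the $\ell^\infty$-smallness $\|\eta\,\mathcal{T}\|<1$ forces the error term $\eta^n\,\mathcal{T}^n x$ to disappear in the limit, while invariance of the cone $\mathcal{C}$ under $\mathcal{T}$ transports the decay exponent $\rho$ of the source term $\phi$ through every iterate. Accordingly I would establish the two properties of $\mathcal{T}$ above first; everything else is bookkeeping. (The computation actually delivers the stronger bound $x_k\le 2C\,2^{-\rho k}$, of which the stated estimate is a weakening.)
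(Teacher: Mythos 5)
Your argument is correct, and the constants all check out. For the record, the paper itself does not prove Lemma \ref{l107}: it is quoted verbatim from Killip--Visan \cite{KillipVisanSupercriticalNLW3D}, so there is no in-paper proof to compare against; what you have written is a complete, self-contained substitute. The two operator bounds you isolate are exactly the content of the hypothesis on $\eta$: the $\ell^\infty$ bound $\eta\bigl(\tfrac{1}{1-2^{-\gamma}}+\tfrac{1}{1-2^{-\gamma'}}\bigr)\le\tfrac12$ kills the remainder $\eta^n\mathcal{T}^n x$ (this is where boundedness of $\{x_k\}$ is genuinely used, and it is the right way to handle the advanced terms $l\ge k$ that block a naive induction on $k$), while the weighted bound $\eta D\le\tfrac12$ with $D=\tfrac{1}{2^{\gamma-\rho}-1}+\tfrac{1}{1-2^{-(\gamma'+\rho)}}$ propagates the decay rate $\rho$ through the Neumann series; your elementary inequalities $2^{\gamma-\rho}-1\ge 1-2^{\rho-\gamma}$ and $2^{-(\gamma'+\rho)}\le 2^{-\gamma'}$ correctly reduce both to the stated smallness of $\eta$. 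One point worth stating explicitly rather than leaving implicit: the cone $\mathcal{C}=\{0\le y_k\le 2^{-\rho k}\}$ is not literally invariant under $\mathcal{T}$ (since $D$ may exceed $1$), so the induction $(\mathcal{T}^j\phi)_k\le D^j2^{-\rho k}$ requires the linearity and monotonicity of $\mathcal{T}$ to rescale at each step; this is trivial but should be said. Your final constant $2C$ is indeed stronger than the stated $4C+\lVert x\rVert_{l^\infty}$.
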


We now turn our attention to the proof of the bound $(\ref{l75}$). 

Fix $\gamma=d-\frac{d}{R}-3$, $\gamma'=\frac{d}{R}-\frac{d}{2}+2$, $C=1$ and $\rho\in (0,\gamma)$.  Let $C'$ be the constant in the inequality given in $(\ref{l64})$ (note that this constant comes from the combinatorial considerations, as well as the constants in each application of the Sobolev and Bernstein inequalities, and thus may be chosen independent of $\eta$ and $N_0$).  

We now choose $\eta>0$ such that 
\begin{align*}
\eta':=C'\eta\leq \left(\frac{1}{4}\min\{1-2^{-\gamma},1-2^{-\gamma'},1-2^{\rho-\gamma}\}\right)^2\end{align*}
and
\begin{align}
\eta'\leq 2^{-4(\gamma+\gamma')}.\label{l108}
\end{align}
Having chosen $\eta$, we may use our hypothesis on $u$ (in the context of the proof of Lemma $\ref{l58}$) to choose $N_0\in\mathbb{N}$ such that
\begin{align*}
\lVert |\nabla|^{s_c}u_{\leq N_0}\rVert_{L^\infty L^2}<\eta.
\end{align*}
For all $k\in\mathbb{N}$, we define $x_k=\mathcal{S}(2^{-k}N_0)$.  Then, applying ($\ref{l64}$) for all $k\geq 0$, we have
\begin{align}
\nonumber x_k&=\mathcal{S}(2^{-k}N_0)\\
\nonumber &\leq C'\left(\frac{2^{-k}N_0}{N_0}\right)^\gamma +C'\eta \sum_{i=0}^{k+2} \left(\frac{2^{-k}N_0}{2^{-i}N_0}\right)^\gamma x_i+C'\eta\sum_{i=k+3}^\infty \left(\frac{2^{-i}N_0}{2^{-k}N_0}\right)^{\gamma'}x_i\\
\nonumber &=C'2^{-k\gamma}+\eta'\sum_{i=0}^{k+2} 2^{(i-k)\gamma}x_i+\eta'\sum_{i=k+3}^\infty 2^{(k-i)\gamma}x_i\\
\nonumber &=C'2^{-k\gamma}+\eta'\sum_{i=0}^{k-1} 2^{-\gamma|k-i|}x_i+\eta'x_k+\eta'2^{[(k+1)-k]\gamma} x_{k+1}\\
\nonumber &\hspace{0.2in}+\eta' 2^{[(k+2)-k]\gamma}x_{k+2}+\eta'\sum_{i=k+3}^\infty 2^{-\gamma'|k-i|}x_i\\
\nonumber &\leq C'2^{-k\gamma}+(\eta')^\frac{1}{2}\sum_{i=0}^k 2^{-\gamma|k-i|}x_i+(\eta')^\frac{1}{2}x_k+(\eta')^\frac{1}{2}2^{-\gamma'} x_{k+1}\\
\nonumber &\hspace{0.2in}+(\eta')^\frac{1}{2}2^{-2\gamma'}x_{k+2}+(\eta')^\frac{1}{2}\sum_{i=k+3}^\infty 2^{-\gamma' |k-i|}x_i\\
&\leq C'2^{-k\gamma}+(\eta')^\frac{1}{2}\sum_{i=0}^{k-1} 2^{-\gamma |k-i|}x_i+(\eta')^\frac{1}{2}\sum_{i=k}^\infty 2^{-\gamma' |k-i|}x_i\label{l109}
\end{align}
where we have used ($\ref{l108}$) and noted that $\eta'<1$ and $2^{-\gamma |k-k|}=2^{-\gamma' |k-k|}=2^0$.  

Applying the estimate ($\ref{l109}$) and invoking Lemma $\ref{l107}$, we obtain the bound
\begin{align*}
x_k\lesssim 2^{-k\rho}.
\end{align*}
Thus, for all $N=2^{-k}N_0\leq 8N_0$, we have
\begin{align*}
\mathcal{S}(N)=\mathcal{S}(2^{-k}N_0)\lesssim (2^{-k})^\rho=N^\rho
\end{align*}
where $\rho\in (0,d-\frac{d}{R}-3)$.  This gives the desired inequality ($\ref{l75}$).

\subsection{Weak continuity of the wave propagator.}
We now recall that the wave propagator $\mathcal{W}(t)$ is weakly continuous for all $t\in\mathbb{R}$, which was used to obtain the inequality ($\ref{l96}$) in the proof of Proposition $\ref{l89}$.

\begin{proposition}
\label{l110}
Suppose $\{(f_n,g_n)\}\subset \dot{H}_x^1\times L_x^2$ is a sequence such that for some $(f,g)\in \dot{H}_x^1\times L_x^2$, we have
\begin{align}
(f_n,g_n)\rightharpoonup (f,g)\quad\textrm{weakly in }\quad \dot{H}_x^1\times L_x^2.
\label{l111}
\end{align}
Then for every $\tau\in \mathbb{R}$, 
\begin{align*}
\mathcal{W}(\tau)(f_n,g_n)\rightharpoonup \mathcal{W}(\tau)(f,g)\quad\textrm{weakly in }\quad L_x^\frac{2d}{d-2}
\end{align*}
\end{proposition}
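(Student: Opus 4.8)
The plan is a routine density–duality argument. I would begin by noting that, since $(f_n,g_n)\rightharpoonup(f,g)$ weakly in $\dot{H}_x^1\times L_x^2$, the uniform boundedness principle gives a bound $\sup_n\lVert(f_n,g_n)\rVert_{\dot{H}_x^1\times L_x^2}\le M$. The energy estimate (equivalently the $L_t^\infty L_x^2$ part of the Strichartz bound $(\ref{l6})$) shows that, for each fixed $\tau$, $\mathcal{W}(\tau)\colon\dot{H}_x^1\times L_x^2\to\dot{H}_x^1$ is bounded; composing with the Sobolev embedding $\dot{H}_x^1\hookrightarrow L_x^{\frac{2d}{d-2}}$ yields $\sup_n\lVert\mathcal{W}(\tau)(f_n,g_n)\rVert_{L_x^{\frac{2d}{d-2}}}\le CM$ and $\mathcal{W}(\tau)(f,g)\in L_x^{\frac{2d}{d-2}}$, so the claimed weak limit makes sense. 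Since $L_x^{\frac{2d}{d-2}}$ is reflexive with dual $L_x^{\frac{2d}{d+2}}$, it suffices to prove
\[
\int_{\mathbb{R}^d}\mathcal{W}(\tau)(f_n,g_n)\,\phi\,dx\longrightarrow\int_{\mathbb{R}^d}\mathcal{W}(\tau)(f,g)\,\phi\,dx
\]
for every $\phi$ in the dense subclass $\mathcal{S}(\mathbb{R}^d)\subset L_x^{\frac{2d}{d+2}}$, and then to upgrade to arbitrary $\phi\in L_x^{\frac{2d}{d+2}}$ using the uniform bound.

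For $\phi\in\mathcal{S}(\mathbb{R}^d)$, I would use Parseval together with the fact that the Fourier multipliers $\cos(\tau|\nabla|)$ and $\tfrac{\sin(\tau|\nabla|)}{|\nabla|}$ have real, even symbols (hence are formally self-adjoint) to write
\[
\int_{\mathbb{R}^d}\mathcal{W}(\tau)(f_n,g_n)\,\phi\,dx=\int_{\mathbb{R}^d}f_n\,\cos(\tau|\nabla|)\phi\,dx+\int_{\mathbb{R}^d}g_n\,\tfrac{\sin(\tau|\nabla|)}{|\nabla|}\phi\,dx.
\]
The point is that both test functions $\cos(\tau|\nabla|)\phi$ and $\tfrac{\sin(\tau|\nabla|)}{|\nabla|}\phi$ are again Schwartz functions (their symbols are bounded, with all derivatives bounded, and smooth through the origin as power series in $|\xi|^2$), so in particular they lie in $L_x^2$; moreover $\cos(\tau|\nabla|)\phi\in\dot{H}_x^{-1}$, i.e.\ $|\nabla|^{-1}\cos(\tau|\nabla|)\phi\in L_x^2$, since $\int_{|\xi|\le1}|\xi|^{-2}\,d\xi<\infty$ in dimension $d\ge3$. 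Therefore the second term converges, $\int g_n\,\tfrac{\sin(\tau|\nabla|)}{|\nabla|}\phi\,dx\to\int g\,\tfrac{\sin(\tau|\nabla|)}{|\nabla|}\phi\,dx$, directly from $g_n\rightharpoonup g$ in $L_x^2$, while for the first term I would write
\[
\int_{\mathbb{R}^d}f_n\,\cos(\tau|\nabla|)\phi\,dx=\langle|\nabla|f_n,\,|\nabla|^{-1}\cos(\tau|\nabla|)\phi\rangle_{L_x^2}\longrightarrow\langle|\nabla|f,\,|\nabla|^{-1}\cos(\tau|\nabla|)\phi\rangle_{L_x^2}
\]
using that $|\nabla|f_n\rightharpoonup|\nabla|f$ in $L_x^2$ (which is precisely $f_n\rightharpoonup f$ in $\dot{H}_x^1$). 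Reversing the self-adjointness step identifies the sum of the two limits as $\int\mathcal{W}(\tau)(f,g)\,\phi\,dx$, proving the claim for $\phi\in\mathcal{S}$.

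To pass to general $\phi\in L_x^{\frac{2d}{d+2}}$, given $\phi$ I would choose $\phi_k\in\mathcal{S}$ with $\phi_k\to\phi$ in $L_x^{\frac{2d}{d+2}}$ and estimate
\[
\Big|\int_{\mathbb{R}^d}\big(\mathcal{W}(\tau)(f_n,g_n)-\mathcal{W}(\tau)(f,g)\big)\phi\,dx\Big|\le 2CM\,\lVert\phi-\phi_k\rVert_{L_x^{\frac{2d}{d+2}}}+\Big|\int_{\mathbb{R}^d}\big(\mathcal{W}(\tau)(f_n,g_n)-\mathcal{W}(\tau)(f,g)\big)\phi_k\,dx\Big|,
\]
taking $k$ large to make the first term uniformly small in $n$ and then letting $n\to\infty$ in the second. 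This is a standard three-$\varepsilon$ argument. The only step that is not entirely formal is the verification that the test functions $\cos(\tau|\nabla|)\phi$ and $\tfrac{\sin(\tau|\nabla|)}{|\nabla|}\phi$ lie in the relevant dual spaces near the zero frequency — this is where $d\ge3$ is used (and is of course harmless in the setting $d\ge6$ of this paper) — but there is no genuine difficulty.
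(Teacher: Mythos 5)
Your argument is correct, and every step checks out: the uniform bound via Banach--Steinhaus, the identification of the adjoint multipliers $\cos(\tau|\nabla|)$ and $\tfrac{\sin(\tau|\nabla|)}{|\nabla|}$ as maps of $\mathcal{S}$ to itself (their symbols are entire functions of $|\xi|^2$ with all derivatives bounded), the low-frequency integrability of $|\xi|^{-1}\cos(\tau|\xi|)\hat\phi(\xi)$ in $d\geq 3$ needed to pair against $|\nabla|f_n\rightharpoonup|\nabla|f$ in $L_x^2$, and the closing density argument. The paper reaches the same conclusion by a much shorter route: it simply observes that $f\mapsto\mathcal{W}(\tau)(f,0)$ and $g\mapsto\mathcal{W}(\tau)(0,g)$ are bounded linear maps from $\dot{H}_x^1$ and $L_x^2$ respectively into $L_x^{2d/(d-2)}$ (by the energy estimate and Sobolev embedding, exactly as in your first paragraph), and then invokes the general fact that a bounded linear operator between normed spaces is weak-to-weak continuous, since composing with any functional in $L_x^{2d/(d+2)}$ produces a bounded functional on the domain. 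Your proof is essentially an unpacking of that abstract fact: you compute the adjoint explicitly on Schwartz functions and verify membership in the relevant dual spaces by hand. What you gain is transparency about where $d\geq 3$ enters and what the adjoint actually is; what the paper's version buys is brevity, at the cost of hiding the (harmless) zero-frequency issue inside the boundedness of $\mathcal{W}(\tau)$ on the energy space. Either route is acceptable; yours could be shortened to two lines by citing weak continuity of bounded linear operators once the boundedness in your first paragraph is established.
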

\begin{proof}
Fix $\tau>0$ and note that by the Strichartz inequality the operators $A:\dot{H}^1_x\rightarrow L_x^\frac{2d}{d-2}$ defined by $Af=\mathcal{W}(\tau)(f,0)$ and $B:L^2_x\rightarrow L_x^\frac{2d}{d-2}$ defined by $Bg=\mathcal{W}(\tau)(0,g)$ are bounded and linear.  Thus, they are weakly continuous and the hypothesis $(\ref{l111})$ implies that  
\begin{align}
\mathcal{W}(\tau)(f_n-f,0)\rightharpoonup 0\quad\textrm{and}\quad\label{l112}\mathcal{W}(\tau)(0,g_n-g)\rightharpoonup 0
\end{align}
weakly in $L_x^\frac{2d}{d-2}$.  

Next, by the linearity of the propagator $\mathcal{W}(\tau)$, we have
\begin{align}
\mathcal{W}(\tau)(f_n,g_n)&=\mathcal{W}(\tau)(f_n-f,0)+\mathcal{W}(\tau)(0,g_n-g)+\mathcal{W}(\tau)(f,g).\label{l113}
\end{align}
Invoking the weak limits ($\ref{l112}$) in ($\ref{l113}$), we obtain the desired weak convergence.
\end{proof}

\section*{Acknowledgements}
The author would like to thank Monica Visan for suggesting the problem and for valuable comments and suggestions on the manuscript, as well as helpful discussions.  The author would also like to express her thanks to her advisors William Beckner and Nata{\u s}a Pavlovi\'c for their generous help, guidance and support.  The author was supported by the John L. \& Anne Crawford Endowed Presidential Fellowship and the Professor \& Mrs. Hubert S. Wall Endowed Presidential Fellowship from the University of Texas at Austin during the preparation of this work.

\end{document}